\numberwithin{equation}{section}
\newtheorem{theorem}{Theorem}[section]
\newtheorem{proposition}[theorem]{Proposition}
\newtheorem{lemma}[theorem]{Lemma}
\newtheorem{thmA}{Theorem A}
\theoremstyle{definition}
\numberwithin{equation}{section}
\begin{document}
\title[Non-degeneracy of solutions for Lane-Emden systems]
{Non-degeneracy of  solution for critical Lane-Emden
systems with linear perturbation}

 \author{Yuxia Guo,  Yichen Hu  and  Shaolong Peng}

\address{  Department of Mathematical Science, Tsinghua University, Beijing, P.R.China}
\email{yguo@tsinghua.edu.cn}

\address{  Department of Mathematical Science, Tsinghua University, Beijing, P.R.China}
\email{hu-yc19@mails.tsinghua.edu.cn}

\address{Academy of Mathematics and Systems Science, Chinese Academy of Sciences, Beijing, 100190,  P.R.China}
\email{slpeng@amss.ac.cn}

\thanks{Y. Guo was partially supported by National Key R\&D Program (2023YFA1010002) and NNSF of China (No.12271283 \& 12031015).}

\begin{abstract}
 In this paper, we consider the following elliptic system
\begin{equation*}
\begin{cases}
-\Delta u = |v|^{p-1}v +\epsilon(\alpha u + \beta_1 v), &\hbox{ in }\Omega,
\\-\Delta v = |u|^{q-1}u+\epsilon(\beta_2 u +\alpha v), &\hbox{ in }\Omega,
\\u=v=0,&\hbox{ on }\partial\Omega,
\end{cases}
\end{equation*}
where $\Omega$ is a smooth bounded domain in $\mathbb{R}^{N}$, $N\geq 3$, $\epsilon$ is a small parameter, $\alpha$, $ \beta_1$ and $ \beta_2$ are real numbers,  $(p,q)$ is a pair of positive numbers lying on the critical hyperbola
\begin{equation*}
\begin{split}
\frac{1}{p+1}+\frac{1}{q+1} =\frac{N-2}{N}.
\end{split}
\end{equation*}
We first revisited the  blowing-up solutions  constructed in \cite{Kim-Pis} and then we proved its non-degeneracy. We believe that the various new ideas and technique computations that we used in this paper  would be very useful to deal with other related problems involving critical Halmitonian system and the construction of new solutions.
\end{abstract}

\maketitle

{\bf Keywords:} critical Lane-Emden systems, non-degeneracy,  blowing-up solutions.

{\bf AMS} Subject Classification: 35B33; 35J47; 35J67.

\section{Introduction}

We consider the following elliptic system
\begin{equation}\label{inteq1}
\begin{cases}
-\Delta u = |v|^{p-1}v +\epsilon(\alpha u + \beta_1 v), &\hbox{ in }\Omega,
\\-\Delta v = |u|^{q-1}u+\epsilon(\beta_2 u +\alpha v), &\hbox{ in }\Omega,
\\u=v=0, &\hbox{ on }\partial\Omega,
\end{cases}
\end{equation}
where $\Omega$ is a smooth bounded domain in $\mathbb{R}^{N}$, $N\geq 3$, $\epsilon$ is a small parameter, $\alpha$, $ \beta_1$ and $ \beta_2$ are real numbers,  $(p,q)$ is a pair of positive numbers lying on the critical hyperbola
\begin{equation}\label{inteq4}
\begin{split}
\frac{1}{p+1}+\frac{1}{q+1} =\frac{N-2}{N}.
\end{split}
\end{equation}

If $u=v$, system \eqref{inteq1} is reduced to the following  Brezis-Nirenberg problem \cite{Bre-Nir}
\begin{equation}\label{inteq2}
 \begin{cases}
-\Delta w = |w|^{\frac{4}{N-2}}w +\lambda w, &\hbox{ in }\Omega,
\\w=0, &\hbox{ on }\partial\Omega.
\end{cases}
\end{equation}
It is known that if  $ \lambda\leq 0$ and $\Omega$ is a star-shaped domain,  the classical Pohozaev identity  \cite{P} implies that problem \eqref{inteq2} has no solution. On the other hand, if $N\geq 4$ and $0<\lambda<\lambda_1$, \eqref{inteq2}  has a solution (see \cite{Bre-Nir}, \cite{Cap-For-Pal}). Here and after, $ \lambda_{n}(\Omega )$
is the n-th eigenvalue of the Laplacian $-\Delta$ with Dirichlet boundary condition.

System \eqref{inteq1} has attracted a lot of interest in the past few decades. For example, Mitidieri (\cite{Mit}) and Van der Vorst (\cite{Van} \cite{Hul-Mit-Van}) proved  that there is no  solution in a star-shaped domain $ \Omega$ provided that the matrix
$
 \begin{pmatrix}
-\frac{\beta_2(q-1)}{2(q+1)} & -\frac{\alpha}{N} \\ -\frac{\alpha}{N} & -\frac{\beta_2(p-1)}{2(p+1)}
\end{pmatrix}
$
is  semi-positive definite. In particular, we have the non-existence result if $p, q > 1 $, $ \alpha =0$
and $\beta_1, \beta_2 \leq 0$. And  Hulshof, Mitidieri and Van der Vorst \cite{Hul-Mit-Van} proved
that if $p, q > 1 $, $ \alpha \geq 0$ and either $ \beta_1>0$ or $ \beta_1=0$ and $ \beta_2>0$, then \eqref{inteq1} has a solution
provided that $ \epsilon^{2}\beta_1\beta_2\neq \lambda_{n}^{2}$ for all $n\in \mathbb{N}$ and $N$ is sufficiently large. They also showed
that the solution is positive if $\beta_1, \beta_2 > 0$ and $ \epsilon^{2}\beta_1\beta_2\neq \lambda_{1}^{2}$. Their approach relies on a
dual formulation due to Clarke and Ekeland \cite{Cla-Eke}.

Very recently, for $N \geq 8$, $p \in (1, \frac{N-1}{N-2} )$, and $(p, q)$ satisfies \eqref{inteq4}, if one of the following conditions is satisfied:
\begin{equation}\label{condition}
\hbox{(i) }\beta_1>0, \ \ \hbox{(ii) }\beta_1=0 \hbox{ and }\alpha>0, \ \ \hbox{(iii) }\beta_1=\alpha=0 \hbox{ and }\beta_2>0,
\end{equation}
in their elegant paper, Kim and Pistoia \cite{Kim-Pis} proved the new existence of blowing-up solutions for system \eqref{inteq1}.

The aim of the present  paper is to discuss the non-degeneracy of the  solutions
constructed in \cite{Kim-Pis}. We would like to point out that the non-degeneracy is very important in the construction of new solutions and in the computation of Morse index of the solution. Before the statement of the main results, let us briefly introduce the blowing-up
solutions constructed in \cite{Kim-Pis}.

Let $N\geq 3$, $(p, q)$ be a pair of positive numbers such that $p \in ( 1 , \frac{N}{N-2} )$ and satisfy
\eqref{inteq4}, and let $(U, V )\in \dot{W}^{2,\frac{p+1}{p}}(\mathbb{R}^{N})\times \dot{W}^{2,\frac{q+1}{q}}(\mathbb{R}^{N})$ be  a positive ground state solution to
\begin{equation}\label{inteq5}
 \begin{cases}
-\Delta U  = |V|^{p-1}V, \hbox{ in }\mathbb{R}^{N},
\\-\Delta V  = |U|^{q-1}U, \hbox{ in }\mathbb{R}^{N}.
\end{cases}
\end{equation}
It is known that $(U, V)$  is radially symmetric and decreasing (see
\cite{Lions}). And, the ground state solution is unique up to translation and scaling. That is
 the family of functions
$\{(U_{\mu,P}(x), V_{\mu,P}(x))\}$ given by
\begin{equation*}
 \begin{split}
&(U_{\mu,P}(x), V_{\mu,P}(x))=( \mu^{-\frac{N}{q+1}}U( \mu^{-1}(x-P) )   ,\mu^{-\frac{N}{p+1}}V( \mu^{-1}(x-P) )      ),
\end{split}
\end{equation*}
for any $\mu>0,P\in\mathbb{R}^{N}$, exhausts all the positive ground states of \eqref{inteq5} ( see Wang \cite{Wang} and Hulshof and Van der Vorst \cite{Hul-Van}  ).

We denote $( U_{1},V_{1}) = (U_{\mu_{1},P_{1}}(x), V_{\mu_{1},P_{1}}(x))$, where $\mu_{1}=d_{1}\mu$ and $d_{1}$ is a positive constant, $P_1\in \mathbb{R}^{N}.$  Set $(PU_{1}, PV_{1})$ be the solution of the system
\begin{equation}\label{inteq7}
 \begin{cases}
-\Delta PU_{1} = V_{1}^{p}, &\hbox{ in }\Omega,
\\-\Delta PV_{1}  = U_{1}^{q}, &\hbox{ in }\Omega,
\\PU_{1}=PV_{1}=0,&\hbox{ on }\partial\Omega.
\end{cases}
\end{equation}
And let $PU_{d_{1},P_{1}}$ be the unique smooth solution of the probelm
\begin{equation*}
 \begin{cases}
-\Delta PU_{d_{1},P_{1}} = PV_{1}^{p} &\hbox{ in }\Omega,
\\PU_{d_{1},P_{1}}=0&\hbox{ on }\partial\Omega.
\end{cases}
\end{equation*}
Set $\frac{1}{p^{*}}=\frac{1}{q+1}+\frac{1}{N}$ and $\frac{1}{q^{*}}=\frac{1}{p+1}+\frac{1}{N} $,
we define the Banach space
\begin{equation*}
X_{p,q}:=\bigg(   W^{2,\frac{p+1}{p}}(\Omega)\cap W_{0}^{2,p^{*}}(\Omega)\bigg)\times\bigg(   W^{2,\frac{q+1}{q}}(\Omega)\cap W_{0}^{2,q^{*}}(\Omega)\bigg)
\end{equation*}
equipped with the norm
\begin{equation*}
||(u,v)||_{X_{p,q}}:=||\Delta u||_{L^{ \frac{p+1}{p}}}+||\Delta v||_{L^{ \frac{q+1}{q}}}.
\end{equation*}
Let $G$ be the Green's function of the Laplacian $-\Delta$ in $\Omega$ with Dirichlet
boundary condition. And $H$ be its regular part, then $G(x,y)=S(x,y)-H(x,y)$ with $S(x,y) = \frac{\gamma_{N}}{|x-y|^{N-2}}$, where $\gamma_{N}=\frac{1}{(N-2)|\mathbb{S}^{N-1}|}$. Then set $ \tilde{G}(x,y)= \int_{\Omega}G(x,z)G^{p}(z,y)dy$, $ \tilde{H}(x,y) =  \int_{\Omega}G(x,z)G^{p}(y,z)dy - \int_{\mathbb{R}^{N}}S(x,y)S^{p}(z,y)dy$ and $\tau(x) =\tilde{H}(x,x) $.

 The result obtained in \cite{Kim-Pis} states as the following.
 \begin{thmA}
 Assume that $N \geq 8$, $p \in (1, \frac{N-1}{N-2} )$, and $(p, q)$ satisfies \eqref{inteq4}. If one of the conditions in \eqref{condition} is satisfied,
then there
exists a small number $\epsilon_{0}>0$ depending only on $N$, $p$, $\Omega$, $\alpha$, $\beta_{1}$ and $\beta_{2}$ such that for any
$\epsilon \in (0, \epsilon_{0})$, system \eqref{inteq1} has a solution in $(C^{2}(\Omega))^{2}$ with the form
\begin{equation*}
(u_{\epsilon},v_{\epsilon}) = (PU_{d_{1,\epsilon},P_{1,\epsilon}}, PV_{1,\epsilon} ) + ( \psi_{\epsilon},\phi_{\epsilon}),
\end{equation*}
where
\begin{equation*}
  \mu = \epsilon^{N_{\beta_1,\beta_2,\alpha}}=
  \begin{cases}
  \frac{p+1}{(N-2)p^2-4p+N-2}, \qquad &\text{if} \,\,\, \beta_1>0,\\
  \frac{1}{(N-2)p-4}, \qquad \qquad &\text{if} \,\,\,\, \beta_1=0 \,\,\text{and}\,\,\alpha>0, \\
\frac{q+1}{N(p-q+2)}, \qquad \qquad &\text{if} \,\,\,\, \beta_1=\alpha=0 \,\,\text{and}\,\,\beta_2>0,
   \end{cases}
\end{equation*}
 $d_{1,\epsilon}\to d_{\beta_1,\beta_2,\alpha}>0$ (where $ d_{\beta_1,\beta_2,\alpha,N}$ represents a constant related to ${\beta_1,\beta_2,\alpha,N}$), $P_{1,\epsilon}\to P_{0} (P_{0}\in \Omega \hbox{ and }\nabla \tau (P_0) =0)$  and $ ||( \psi_{\epsilon},\phi_{\epsilon}) ||_{X_{p,q}} =O(\mu^{\frac{Npq}{q+1}})$, as $\epsilon \to 0$. Moreover, for any
$\epsilon \in (0, \epsilon_{0})$, we also have $ d_{1,\epsilon}\in (\delta_{1}^{-1},\delta_{1}) $ and $ dist( P_{1,\epsilon},\Omega) > \delta_{2}$, where $ \delta_{1}$ and $ \delta_{2}$
 are small fixed positive constants.

 Moreover, if $\beta_{1},\beta_{2}\geq 0$, then system \eqref{inteq1} has a solution with positive components showing
the prescribed blowing-up behavior.
 \end{thmA}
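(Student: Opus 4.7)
The strategy is a Lyapunov--Schmidt finite-dimensional reduction tailored to the critical Lane--Emden system. Fix $(d_1,P_1)$ in a compact subset of $(\delta_1^{-1},\delta_1)\times\{P\in\Omega:\mathrm{dist}(P,\p\Omega)>\delta_2\}$ and let $\mathcal{L}_\mu$ denote the linearization of \eqref{inteq1} at $(PU_1,PV_1)$, regarded as an operator on $X_{p,q}$. Invoking the known non-degeneracy of the ground state $(U_{1,0},V_{1,0})$ of \eqref{inteq5} up to its natural scaling and translation symmetries, the $\Real^{N}$-kernel of $\mathcal{L}_\mu$ is exactly the $(N+1)$-dimensional span of $\p_\mu(U_{\mu,P},V_{\mu,P})$ and $\p_{P_i}(U_{\mu,P},V_{\mu,P})$ for $i=1,\ldots,N$. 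After projecting onto a suitable orthogonal complement of these directions and exploiting the Calder\'on--Zygmund estimates built into the definition of $X_{p,q}$, $\mathcal{L}_\mu$ becomes invertible with operator norm uniform in $\mu\to 0$.

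Next, I would rewrite \eqref{inteq1} as a fixed-point equation $(\psi,\phi)=\mathcal{T}_\eps(\psi,\phi)$ in this complement, where $\mathcal{T}_\eps$ splits into three contributions: the projection error $(U_1-PU_1,V_1-PV_1)$, which a Green's-function expansion shows behaves like a power of $\mu$ times the Robin function of $\Omega$; a nonlinear remainder arising from Taylor-expanding $|V_1+\phi|^{p-1}(V_1+\phi)$ and $|U_1+\psi|^{q-1}(U_1+\psi)$ beyond their linear parts; and the linear perturbation $\eps(\alpha u+\beta_1 v,\beta_2 u+\alpha v)$. A standard contraction-mapping argument on a ball of radius $O(\mu^{Npq/(q+1)})$ in $X_{p,q}$ then produces $(\psi_\eps,\phi_\eps)$ depending smoothly on $(d_1,P_1)$, at the cost of Lagrange multipliers against the approximate kernel.

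This reduces the original problem to finding a critical point of $J_\eps(d_1,P_1):=\mathcal{E}_\eps(PU_1+\psi_\eps,\,PV_1+\phi_\eps)$. Using the sharp estimate on $(\psi_\eps,\phi_\eps)$ together with classical energy expansions, one obtains
\begin{equation*}
J_\eps(d_1,P_1) \;=\; c_0 \;+\; A_1(d_1)\,\mu^{\theta}H(P_1,P_1) \;-\; A_2(d_1)\,\eps\,\mu^{\tau(\beta_1,\beta_2,\alpha)} \;+\; \text{l.o.t.},
\end{equation*}
where $H$ is the Robin function of $\Omega$, $\theta$ is the standard Robin-function exponent for the system, and $\tau$ is determined by which of the three perturbation pairings (coming respectively from $\int V_1\,PU_1$ weighted by $\beta_1$, from $\int V_1^{p}PU_1+\int U_1^{q}PV_1$ weighted by $\alpha$, or from $\int U_1\,PV_1$ weighted by $\beta_2$) gives the dominant contribution. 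Balancing the two $\mu$-exponents forces precisely $\mu=\eps^{N_{\beta_1,\beta_2,\alpha}}$ in the three cases of \eqref{condition}; the limiting reduced problem then admits a non-degenerate critical point at $(d_{\beta_1,\beta_2,\alpha},P_0)$, where $P_0$ is a non-degenerate critical point of $H$, and the implicit function theorem delivers $(d_{1,\eps},P_{1,\eps})$.

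The main obstacle is the last step, namely obtaining sharp $\mu$-asymptotics of the three perturbation pairings above. Since $(p,q)$ is asymmetric on the critical hyperbola \eqref{inteq4}, the bubble $V_{1,0}$ has slow decay at infinity, and the restrictions $p\in(1,\frac{N-1}{N-2})$ and $N\geq 8$ are precisely what ensure that each relevant integral converges with the correct power of $\mu$ and with a sign matching that of the Robin-function term. The third regime $\beta_1=\alpha=0$, $\beta_2>0$ is the most delicate, because the pairing $\int U_1\,PV_1$ involves the faster-concentrating bubble $U_1$ and requires a separate sub-leading expansion. Finally, positivity of $(u_\eps,v_\eps)$ when $\beta_1,\beta_2\geq 0$ is recovered at the end via a maximum-principle comparison, using that the strictly positive approximate profile dominates the small remainder.
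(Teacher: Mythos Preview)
Your outline is essentially correct and is precisely the Lyapunov--Schmidt scheme of Kim--Pistoia \cite{Kim-Pis}, from which Theorem~A is quoted verbatim in this paper. One small correction: for the Lane--Emden system the Robin-type function governing the reduced problem is not the usual $H$ but the modified object $\tilde H$ (built from $G^{p}$), and the limiting concentration point $P_0$ is a critical point of $\tilde H(\cdot,\cdot)$ on the diagonal rather than of $H$.

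It is worth noting, however, that the present paper does \emph{not} reprove Theorem~A along the lines you sketch. Theorem~A is simply cited from \cite{Kim-Pis}; what Section~4 of the paper does is revisit only the infinite-dimensional (contraction-mapping) step, replacing the Sobolev framework $X_{p,q}$ you use by weighted $L^\infty$ norms $\|\cdot\|_{*,\epsilon}$, $\|\cdot\|_{**,\epsilon}$. After re-running the contraction in these norms, the paper checks that the resulting remainder also lies in the $X_{p,q}$-ball of the same radius and hence, by uniqueness of the fixed point, coincides with the $(\psi_\epsilon,\phi_\epsilon)$ of \cite{Kim-Pis}. The finite-dimensional reduction (your third paragraph) is not redone at all. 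The payoff of the paper's detour is a genuine pointwise bound $\|(\psi_\epsilon,\phi_\epsilon)\|_{*,\epsilon}\le C\mu^{\frac{N(p+1)q}{q+1}-\frac{N+2}{2}-1-\theta}$, which is indispensable for the local Pohozaev arguments in the non-degeneracy proof and which your $X_{p,q}$-based approach does not directly yield.
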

Our main result is as follows:
 \begin{theorem}\label{Tmain}
 Assume that $N \geq 8$, $p \in (1, \frac{N-1}{N-2} )$, and $(p, q)$ satisfies \eqref{inteq4}. If $\beta_{1}=\alpha=0,\hbox{ } \beta_{2}>0$ and $P_{0}$ is the non-degenerate critical point of $\tau(x)$,
then there
exists a small number $\tilde{\epsilon}_{0}>0$ depending  on $N$, $p$, $\Omega$ and $\beta_{2}$ such that for any
$\epsilon \in (0, \tilde{\epsilon}_{0})$,  the solution $(u_\epsilon, v_\epsilon)$ constructed in Theorem A is non-degenerate in the sense that if $(\eta,\xi)\in (H^{1}_{0}(\Omega))^{2}$ is a pair solution of the following problem:
  \begin{equation*}
 \begin{cases}
-\Delta \eta = pv_{\epsilon}^{p-1}\xi, &\hbox{ in }\Omega,
\\-\Delta \xi  =q u_{\epsilon}^{q-1}\eta +\epsilon\beta_2 \eta, &\hbox{ in }\Omega,
\\\eta=\xi=0,&\hbox{ on }\partial\Omega.
\end{cases}
\end{equation*}
Then $(\eta ,\xi)=(0,0)$.
 \end{theorem}
 {\bf{ Remark 1.}} We would like to mention that, Theorem \ref{Tmain} is still true for case (i) $\beta_1>0$ and case (ii) $\beta_1=0 \,\,\text{and}\,\,\alpha>0$. We can also prove that the non-degeneracy of $(u_\epsilon, v_\epsilon)$ in case (i) and case (ii), similar to the proof approach used in Theorem \ref{Tmain},  so we leave the details to
readers.

 {\bf{ Remark 2.}} When $ p=1$ and $\alpha = 0$, system \eqref{inteq1} degenerates into the following single equation
 \begin{equation}\label{danfang}
     \begin{cases}
         (-\Delta)^{2} u = (1+\epsilon\beta_1)|u|^{\frac{8}{N-4}}u + (1+\epsilon\beta_1)\epsilon\beta_2 u ,\hbox{ in }\Omega,
         \\u = \Delta u = 0, \hbox{ on }\partial\Omega.
     \end{cases}
 \end{equation}
 Given the existence results in \cite{Kim-Pis} and our non-degeneracy, we believe that we can construct solutions to equation \eqref{danfang} and prove their non-degeneracy. This will be our future work.

{\bf{Remark 3.}} When $ p=1$, $\tilde{G}$ becomes the Green’s function of the bi-Laplacian $\Delta^{2}$ in $\Omega$ with
the Navier boundary condition and $\tau$ is the corresponding Robin function. The second-order case of equation \eqref{danfang} is as follows.
\begin{equation}\label{danfang2}
     \begin{cases}
         (-\Delta) u = |u|^{\frac{4}{N-2}}u + \epsilon\beta_2 u ,\hbox{ in }\Omega,
         \\u = 0, \hbox{ on }\partial\Omega.
     \end{cases}
 \end{equation}
 As well know that, equation \eqref{danfang2} has explosive solutions at the critical point of Robin function. Moerover, if the critical point of Robin function is non-degenerate, then the explosive solutions is non-degenerate. From this, we can see that our non-degenerate result is consistent with this.

To prove  the theorem, roughly speaking, we will use the local Pohozaev identities for system and proceed a contrary argument. For this purpose, we have to first obtain the point wise estimates for the solutions in Theorem {\bf{A}}. Hence we have to revisit  problem and give a different
proof of Theorem A, so that we can obtain the  necessary estimates that we  needed in the proof of the
non-degeneracy result. The main ingredients of the paper are the following:

First, we note that the remainder terms in the decomposition of the corresponding linearized system will be very complicated, more extra techniques and careful computation are needed.

Second, due to the decay rate of the ground state,  $\displaystyle \int_{\mathbb R^{N}}U^{q}_{1,0}$ is integrable, but $\displaystyle\int_{\mathbb R^{N}}V^{p}_{1,0}$ is non-integrable, which leads to  new challenges to be overcome.

Third,  $PU_{d_1,p_1}$ has no $C^2$ estimate, which is not enough for our problem. To overcome this difficulty, we have to estimate carefully the term  $\displaystyle {\frac{  \partial PU_{d_{1,\epsilon},P_{1,\varepsilon}} -U_{1,\epsilon} }{\partial x_i}(y) -\frac{  \partial PU_{d_{1,\epsilon},P_{1,\epsilon}} -U_{1,\epsilon} }{\partial x_i}(P_{0})},$ some more extra ideas and technique are needed.

At last,  since the system is a Hamiltonian type system, which is strongly indefinitely. We have to  use  the local Pohozaev identities in a quite different way, to  decide where and how to use the identities.  We believe that our  variety new ideas and technique computations  will help  to deal with other related problems involving critical exponents and Hamiltonian-type system.

 We would like to point out that the local Pohozaev identities play an important role in   many problems, for examples, the existence of infinitely bubbling solutions (see \cite{Guo2020, PWY}),  the local uniqueness  for bubbling solutions (see  \cite{dly, GNNT,  GPY}) and the non-degeneracy of bubbling solutions (see \cite{guo1}). To our best knowledge, it is the first time to use local Pohozaev identities to study the problems involving Hamiltonian system with critical exponents.

The paper is organized as follows. In Section 2, we establish the local Pohozaev identities. In
order to obtain the point wise  estimates for the solutions, we revisit the proof of Theorem {\bf{A}} in Section 3.
Section 4 is devoted to the proof of Theorem 1.1. Some important estimations and preliminary results are attached in the Appendix. Indeed, some of the estimates in this part are independently interesting, which we believe that they are very useful for other related problems.

\section{Local Pohozaev identities}

Let
\begin{equation*}
    \begin{cases}
    -\Delta u = v^{p}+\epsilon (\alpha u+\beta_1 v), \hbox{ in } \Omega,
    \\-\Delta v = u^{q} + \epsilon (\beta_2 u+\alpha v), \hbox{ in } \Omega,
    \\ u = v =0, \hbox{ on } \partial\Omega,
    \end{cases}
    \text{and}\,\,
    \begin{cases}
    -\Delta \eta = p v^{p-1}\xi+\epsilon (\alpha \eta+\beta_1 \xi), \hbox{ in } \Omega,
    \\-\Delta \xi = q u^{q-1}\eta + \epsilon (\beta_2 \eta+\alpha \xi), \hbox{ in } \Omega,
    \\ \eta = \xi =0, \hbox{ on } \partial\Omega.
    \end{cases}
\end{equation*}

Assume that $\Omega_1$  is a smooth domain in $ \Omega$. We have the following identities.
\begin{lemma}\label{le2.1}
It holds
\begin{equation}\label{2.1}
\begin{split}
    &\int_{\partial\Omega_1}(-\frac{\partial u}{\partial\nu}\frac{\partial\xi}{\partial x_i} -\frac{\partial \xi}{\partial\nu}\frac{\partial u}{\partial x_i} +\frac{\partial u}{\partial\nu}\frac{\partial\xi}{\partial\nu}\nu_i)ds+\int_{\partial\Omega_1}(-\frac{\partial v}{\partial\nu}\frac{\partial\eta}{\partial  x_i} -\frac{\partial \eta}{\partial\nu}\frac{\partial v}{\partial  x_i} +\frac{\partial v}{\partial\nu}\frac{\partial\eta}{\partial\nu}\nu_i)ds
    \\=&\int_{\partial\Omega_1}(u^{q}\eta+v^{p}\xi)\nu_ids+\epsilon\alpha \int_{\partial\Omega_1}u\xi\nu_i+v\eta \nu_i ds+\epsilon\beta_1\int_{\partial\Omega_1}v\xi\nu_i ds+\epsilon\beta_2\int_{\partial\Omega_1}u\eta\nu_i ds,
    \end{split}
\end{equation}
and
\begin{equation}\label{2.2}
    \begin{split}
       &\int_{\partial\Omega_1}(-\frac{\partial u}{\partial\nu}\langle \nabla\xi,x-y_0 \rangle -\frac{\partial \xi}{\partial\nu}\langle \nabla u,x-y_0 \rangle+\langle  \nabla u,\nabla\xi\rangle\langle \nu,x-y_0\rangle)ds \\&+\int_{\partial\Omega_1}(-\frac{\partial v}{\partial\nu}\langle \nabla\eta,x-y_0 \rangle -\frac{\partial \eta}{\partial\nu}\langle \nabla v,x-y_0 \rangle+\langle  \nabla v,\nabla\eta\rangle\langle \nu,x-y_0\rangle)ds
       \\&-\frac{N}{p+1}\int_{\partial\Omega_1}(\frac{\partial u}{\partial\nu}\xi+\frac{\partial\eta}{\partial\nu}v)ds-\frac{N}{q+1}\int_{\partial\Omega_1}(\frac{\partial v}{\partial\nu}\eta+\frac{\partial\xi}{\partial\nu}u)ds
       \\=&(-N+\frac{2N}{q+1})\epsilon\beta_2\int_{\Omega_1}u\eta dx+(-N+\frac{2N}{p+1})\epsilon\beta_1\int_{\Omega_1}v\xi dx-2\epsilon\alpha\int_{\Omega_1}(v\eta+u\xi)ds
       \\&+\epsilon\beta_2\int_{\partial\Omega_1}u\eta\langle\nu,x-y_0 \rangle ds+\epsilon\beta_1\int_{\partial\Omega_1}v\xi\langle\nu,x-y_0 \rangle ds+\epsilon\alpha\int_{\partial\Omega_1}(  u\xi+v\eta)\langle\nu,x-y_0 \rangle ds
       \\&+ \int_{\partial\Omega_1}(u^{q}\xi+v^{p}\eta)\langle \nu,x-y_0\rangle ds.
    \end{split}
\end{equation}
\end{lemma}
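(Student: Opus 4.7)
Both identities are Pohozaev-type consequences of multiplier arguments adapted to the Hamiltonian pairing $(u,\xi)$ and $(v,\eta)$: I would multiply each of the four equations in \eqref{e2.1}--\eqref{e2.2} by a test function built from the conjugate unknown, integrate over $\Omega_1$, integrate by parts, and sum. The algebraic key that makes everything collapse is the Leibniz identity $v^p\,\partial\xi+pv^{p-1}\xi\,\partial v=\partial(v^p\xi)$, together with its analogue $u^q\,\partial\eta+qu^{q-1}\eta\,\partial u=\partial(u^q\eta)$: these turn the four nonlinear bulk contributions into a single pair of pure boundary integrals in $v^p\xi$ and $u^q\eta$.

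For the translation identity, the multipliers are $\partial_i\xi,\partial_iu,\partial_i\eta,\partial_iv$ applied to the equations for $u,\xi,v,\eta$ respectively. Integration by parts of $\int_{\Omega_1}(-\Delta u)\partial_i\xi+(-\Delta\xi)\partial_iu$ yields the boundary triple $\int_{\partial\Omega_1}[\nabla u\!\cdot\!\nabla\xi\,\nu_i-\tfrac{\partial u}{\partial\nu}\partial_i\xi-\tfrac{\partial\xi}{\partial\nu}\partial_iu]\,ds$, and similarly for the $(v,\eta)$-pair. On the right, the $\epsilon$-terms regroup as $\alpha\,\partial_i(u\xi+v\eta)+\beta_1\,\partial_i(v\xi)+\beta_2\,\partial_i(u\eta)$, which together with the nonlinear Leibniz collapse reduces everything to the stated boundary integrals; no bulk term survives.

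For the Pohozaev identity, I would use the multipliers adapted to the Hamiltonian scaling $(u,v)\mapsto(\lambda^{N/(q+1)}u(\lambda\cdot),\lambda^{N/(p+1)}v(\lambda\cdot))$: namely $\langle\nabla\xi,x-y_0\rangle+\tfrac{N}{p+1}\xi$ on the $u$-equation, $\langle\nabla u,x-y_0\rangle+\tfrac{N}{q+1}u$ on the $\xi$-equation, and symmetrically with roles of $\tfrac{N}{p+1},\tfrac{N}{q+1}$ swapped on the $(v,\eta)$-pair. Using $\int\langle\nabla(fg),x-y_0\rangle=\int_{\partial\Omega_1}fg\langle\nu,x-y_0\rangle-N\int fg$, the Pohozaev parts produce bulk $-N\int(v^p\xi+u^q\eta)$ which is cancelled exactly by the weight contributions $\tfrac{N(p+1)}{p+1}\int v^p\xi+\tfrac{N(q+1)}{q+1}\int u^q\eta$. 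The interior $\nabla u\!\cdot\!\nabla\xi$ and $\nabla v\!\cdot\!\nabla\eta$ terms also cancel because $\tfrac{N}{p+1}+\tfrac{N}{q+1}=N-2$ by the critical hyperbola \eqref{inteq4}; this is the one place where criticality of $(p,q)$ enters.

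The real bookkeeping obstacle is tracking the $\epsilon$-terms in the Pohozaev identity. Grouped by coefficient, the Pohozaev part contributes bulk $-N\int[\alpha(u\xi+v\eta)+\beta_1 v\xi+\beta_2 u\eta]$, while the weight part contributes $(\tfrac{N}{p+1}+\tfrac{N}{q+1})\alpha\int(u\xi+v\eta)+\tfrac{2N}{p+1}\beta_1\int v\xi+\tfrac{2N}{q+1}\beta_2\int u\eta$. The $\alpha$-contributions cancel down to the clean $-2\alpha\int(u\xi+v\eta)$, but because the two scaling weights differ, the $\beta_1,\beta_2$ terms do not cancel and produce exactly the residues $(-N+\tfrac{2N}{p+1})\beta_1$ and $(-N+\tfrac{2N}{q+1})\beta_2$ shown on the right of the second identity. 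This unbalanced scaling is the genuinely Hamiltonian signature of the problem and is the only nontrivial bookkeeping step; everything else is routine integration by parts.
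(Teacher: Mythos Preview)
Your approach is correct and essentially identical to the paper's: multiply the four equations by the conjugate test functions $\partial_i\xi,\partial_iu,\partial_i\eta,\partial_iv$ (respectively $\langle\nabla\xi,x-y_0\rangle,\ldots$ with weights), integrate by parts, and use the Leibniz collapses $\partial(v^p\xi)$, $\partial(u^q\eta)$ together with the critical hyperbola $\tfrac{N}{p+1}+\tfrac{N}{q+1}=N-2$. The only organizational difference is that you build the weights $\tfrac{N}{p+1}\xi$, $\tfrac{N}{q+1}u$, etc.\ into the multipliers from the start, whereas the paper first treats the pure radial multiplier and then separately derives the auxiliary relations \eqref{2.6}--\eqref{2.7} to eliminate the bulk $\int\langle\nabla u,\nabla\xi\rangle$ term; your packaging is slightly cleaner but the computation is the same.
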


\begin{proof}First, we prove \eqref{2.1}. By direct calculations, we have
\begin{equation*}
    \begin{split}
       &\int_{\Omega_1} ( -\Delta u\frac{\partial\xi}{\partial x_i} -\Delta v\frac{\partial\eta}{\partial x_i} -\Delta \eta \frac{\partial v}{\partial x_i}  -\Delta \xi \frac{\partial u}{\partial x_i})dx
       \\=&\int_{\Omega_1}(v^{p}\frac{\partial\xi}{\partial x_i} + pv^{p-1}\frac{\partial v}{\partial x_i}+u^{q}\frac{\partial\eta}{\partial x_i}+qu^{q-1}\frac{\partial u}{\partial x_i})dx+\epsilon\beta_2\int_{\Omega_1}(u\frac{\partial\eta}{\partial x_i}+\eta\frac{\partial u}{\partial x_i})dx
        \\&+\epsilon\alpha\int_{\Omega_1}(v\frac{\partial\eta}{\partial x_i}+\eta\frac{\partial v}{\partial x_i})dx
        +\epsilon\beta_1\int_{\Omega_1}(v\frac{\partial\xi}{\partial x_i}+\xi \frac{\partial v}{\partial x_i})dx
        +\epsilon \alpha \int_{\Omega_1}(u\frac{\partial\xi}{\partial x_i}+\xi \frac{\partial u}{\partial x_i})dx.
    \end{split}
\end{equation*}
One can check that
\begin{equation*}
    \begin{split}
        &\int_{\Omega_1}(v^{p}\frac{\partial\xi}{\partial x_i} + pv^{p-1}\frac{\partial v}{\partial x_i}+u^{q}\frac{\partial\eta}{\partial x_i}+qu^{q-1}\frac{\partial u}{\partial x_i})dx+\epsilon\beta_2\int_{\Omega_1}(u\frac{\partial\eta}{\partial x_i}+\eta\frac{\partial u}{\partial x_i})dx
        \\&+\epsilon\alpha\int_{\Omega_1}(v\frac{\partial\eta}{\partial x_i}+\eta\frac{\partial v}{\partial x_i})dx
        +\epsilon\beta_1\int_{\Omega_1}(v\frac{\partial\xi}{\partial x_i}+\xi \frac{\partial v}{\partial x_i})dx
        +\epsilon \alpha \int_{\Omega_1}(u\frac{\partial\xi}{\partial x_i}+\xi \frac{\partial u}{\partial x_i})dx
        \\=&\int_{\partial\Omega_1}(u^{q}\eta+v^{p}\xi)\nu_i ds
        +\epsilon\alpha\int_{\partial\Omega_1}(u\eta\nu_i+v\xi\nu_i)ds
        +\epsilon\beta_1\int_{\partial\Omega_1}v\xi\nu_ids+\epsilon\beta_2\int_{\partial\Omega_1}u\eta\nu_ids,
    \end{split}
\end{equation*}
and
\begin{equation*}
    \begin{split}
    &\int_{\Omega_1} ( -\Delta u\frac{\partial\xi}{\partial x_i} -\Delta v\frac{\partial\eta}{\partial x_i} -\Delta \eta \frac{\partial v}{\partial x_i}  -\Delta \xi \frac{\partial u}{\partial x_i})dx
    \\=&\int_{\partial\Omega_1}(-\frac{\partial u}{\partial\nu}\frac{\partial\xi}{\partial x_i} -\frac{\partial \xi}{\partial\nu}\frac{\partial u}{\partial x_i} +\frac{\partial u}{\partial\nu}\frac{\partial\xi}{\partial\nu}\nu_i)ds+\int_{\partial\Omega_1}(-\frac{\partial v}{\partial\nu}\frac{\partial\eta}{\partial  x_i} -\frac{\partial \eta}{\partial\nu}\frac{\partial v}{\partial  x_i} +\frac{\partial v}{\partial\nu}\frac{\partial\eta}{\partial\nu}\nu_i)ds.
    \end{split}
\end{equation*}
Therefore, we have proved \eqref{2.1}.

Next, we prove  \eqref{2.2}. We have
\begin{equation*}
\begin{split}
   & \int_{\Omega_1}(-\Delta u\langle \nabla\xi,x-y_0 \rangle  -\Delta v\langle \nabla\eta,x-y_0 \rangle  -\Delta \eta\langle \nabla v,x-y_0 \rangle-\Delta \xi\langle \nabla u,x-y_0 \rangle  )dx
    \\=&\int_{\Omega_1}(v^{p}\langle \nabla\xi,x-y_0 \rangle+u^{q}\langle \nabla\eta,x-y_0 \rangle + pv^{p-1}\xi\langle \nabla v,x-y_0 \rangle+qu^{q-1}\eta \langle \nabla u,x-y_0 \rangle )dx
    \\&+\epsilon\beta_2\int_{\Omega_1}(u\langle \nabla\eta,x-y_0 \rangle+\eta\langle \nabla u,x-y_0 \rangle)dx+\epsilon\alpha\int_{\Omega_1}(v\langle \nabla\eta,x-y_0 \rangle+\eta\langle \nabla v,x-y_0 \rangle)dx
    \\&+ \epsilon\alpha\int_{\Omega_1}(u\langle \nabla\xi,x-y_0 \rangle+\xi\langle \nabla u,x-y_0 \rangle)dx+\epsilon\beta_{1}\int_{\Omega_1}(v\langle \nabla\xi,x-y_0 \rangle+\xi\langle \nabla v,x-y_0 \rangle)dx     .
    \end{split}
\end{equation*}
Moreover,
\begin{equation}\label{2.4}
    \begin{split}
      &\int_{\Omega_1}(v^{p}\langle \nabla\xi,x-y_0 \rangle+u^{q}\langle \nabla\eta,x-y_0 \rangle + pv^{p-1}\xi\langle \nabla v,x-y_0 \rangle+qu^{q-1}\eta \langle \nabla u,x-y_0 \rangle )dx
    \\&+\epsilon\beta_2\int_{\Omega_1}(u\langle \nabla\eta,x-y_0 \rangle+\eta\langle \nabla u,x-y_0 \rangle)dx+\epsilon\alpha\int_{\Omega_1}(v\langle \nabla\eta,x-y_0 \rangle+\eta\langle \nabla v,x-y_0 \rangle)dx
    \\&+ \epsilon\alpha\int_{\Omega_1}(u\langle \nabla\xi,x-y_0 \rangle+\xi\langle \nabla u,x-y_0 \rangle)dx+\epsilon\beta_{1}\int_{\Omega_1}(v\langle \nabla\xi,x-y_0 \rangle+\xi\langle \nabla v,x-y_0 \rangle)dx
      \\&+\epsilon\beta_2\int_{\Omega_1}(u\langle \eta,x-y_0 \rangle+\eta\langle u,x-y_0 \rangle)dx
      \\= &\int_{\partial\Omega_1}(u^{q}\eta+v^{p}\xi )\langle \nu,x-y_0\rangle ds
      -N\int_{\Omega_1}(u^{q}\eta+v^{p}\xi)dx
      +\epsilon\beta_2\int_{\partial\Omega_1}u\eta\langle\nu,x-y_0\rangle ds
      \\&-N\epsilon\beta_2\int_{\Omega_1}u\eta dx
      + \epsilon\alpha\int_{\partial\Omega_1}(v\eta+u\xi)\langle\nu,x-y_0\rangle ds-N\epsilon\alpha\int_{\Omega_1}(v\eta+u\xi) dx
      \\&+ \epsilon\beta_1\int_{\partial\Omega_1}v\xi\langle\nu,x-y_0\rangle ds-N\epsilon\beta_1\int_{\Omega_1}v\xi dx   ,
    \end{split}
\end{equation}
and
\begin{equation*}
    \begin{split}
    &\int_{\Omega_1}(-\Delta u\langle \nabla\xi,x-y_0 \rangle  -\Delta v\langle \nabla\eta,x-y_0 \rangle  -\Delta \eta\langle \nabla v,x-y_0 \rangle-\Delta \xi\langle \nabla u,x-y_0 \rangle  )dx
    \\=&\int_{\partial\Omega_1}(-\frac{\partial u}{\partial\nu}\langle \nabla\xi,x-y_0 \rangle -\frac{\partial \xi}{\partial\nu}\langle \nabla u,x-y_0 \rangle+\langle  \nabla u,\nabla\xi\rangle\langle \nu,x-y_0\rangle)ds
    \\&+\int_{\partial\Omega_1}(-\frac{\partial v}{\partial\nu}\langle \nabla\eta,x-y_0 \rangle -\frac{\partial \eta}{\partial\nu}\langle \nabla v,x-y_0 \rangle+\langle  \nabla v,\nabla\eta\rangle\langle \nu,x-y_0\rangle)ds
    \\&+(2-N)\int_{\Omega_1}(\langle \nabla u,\nabla\xi \rangle + \langle \nabla v,\nabla\eta \rangle)dx.
    \end{split}
\end{equation*}
We also have
\begin{equation*}
    \begin{split}
        &\int_{\Omega_1}(q+1)u^{q}\eta
        \\=&\int_{\Omega_1}(-\Delta v -\epsilon\beta_2u -\epsilon\alpha v)\eta+(-\Delta \xi - \epsilon\beta_2\eta -\epsilon\alpha\xi)u
        \\=&\int_{\partial\Omega_1}-\frac{\partial v}{\partial\nu}\eta-\frac{\partial\xi}{\partial\nu}ds+\int_{\Omega}\langle\nabla v,\nabla\eta\rangle+\langle \nabla\xi,\nabla u\rangle-2\epsilon\beta_2\eta u dx
        \\=&\int_{\partial\Omega_1}-\frac{\partial v}{\partial\nu}\eta-\frac{\partial\xi}{\partial\nu}+\frac{\partial u}{\partial\nu}+\frac{\partial\eta}{\partial\nu}v ds+\int_{\Omega}(-2\epsilon\beta_2\eta u-\epsilon\alpha(v\eta+u\xi) -\Delta u\xi-\Delta\eta v)dx
        \\=&\int_{\partial\Omega_1}-\frac{\partial v}{\partial\nu}\eta-\frac{\partial\xi}{\partial\nu}+\frac{\partial u}{\partial\nu}+\frac{\partial\eta}{\partial\nu}v ds+\int_{\Omega}\bigg((p+1)v^{p}\xi +2\epsilon\beta_{1}v\xi-2\epsilon\beta_2u\eta\bigg)dx,
    \end{split}
\end{equation*}
and
\begin{equation}\label{2.7}
\begin{split}
    &(2-N)\int_{\Omega_1}(\langle \nabla u,\nabla\xi \rangle + \langle \nabla v,\nabla\eta \rangle)
    \\=&(2-N)\bigg(\int_{\partial\Omega_1}\frac{\partial u}{\partial \nu}\xi+\frac{\partial\eta}{\partial\nu}v + \int_{\Omega_1}(p+1)v^{p}\xi
    +\int_{\Omega_1}2\epsilon\beta_{1}v\xi+\epsilon\alpha(v\eta+u\xi)    \bigg).
\end{split}
\end{equation}
Collecting \eqref{2.4}-\eqref{2.7}, together with $\frac{1}{p+1} + \frac{1}{q+1} = \frac{N-2}{N}$, we obtain \eqref{2.2}.
\end{proof}

\section{ Revisit the existence problem}
In this section, we will revisit the existence problem for \eqref{inteq1} and give a different
proof of Theorem {\bf{A}}. In \cite{Kim-Pis}, they proves the existence of $(u_{\epsilon},v_{\epsilon})$ in the case of norm $||u_{\epsilon},v_{\epsilon}||_{ X_{p,q}}$. Different from \cite{Kim-Pis}, in this paper, we need uniform point-by-point estimation in the process of proving non-degenerate by using the Pohozaev identity, so we need the existence of $(u_{\epsilon},v_{\epsilon})$ in the case of norm $||u_{\epsilon},v_{\epsilon}||_{*,\epsilon}$.

We first introduce the following norms:
\begin{align*}
    ||(f,g)||_{*,\epsilon} = &\sup_{x\in \Omega}\frac{|f(x)|}{\left(\frac{\mu_{1,\epsilon}^{-\frac{N}{q+1}}}{(1+\mu_{1,\epsilon}^{-1}|x-P_{1,\epsilon}|)^{\frac{N(p+1)}{2(q+1)}+\theta}}\right)}+\sup_{x\in \Omega}\frac{|g(x)|}{\left(\frac{\mu_{1,\epsilon}^{-\frac{N}{p+1}}}{(1+\mu_{1,\epsilon}^{-1}|x-P_{1,\epsilon}|)^{\frac{N-2}{2}+1+\theta}}\right)},
\end{align*}
and
\begin{align*}
    ||(f,g)||_{**,\epsilon} = &\sup_{x\in \Omega}\frac{|f(x)|}{\left(\frac{\mu_{1,\epsilon}^{-\frac{N}{q+1}-2}}{(1+\mu_{1,\epsilon}^{-1}|x-P_{1,\epsilon}|)^{\frac{N(p+1)}{2(q+1)}+2+\theta}}\right)}
    +\sup_{x\in \Omega}\frac{|g(x)|}{\left(\frac{\mu_{1,\epsilon}^{-\frac{N}{p+1}-2}}{(1+\mu_{1,\epsilon}^{-1}|x-P_{1,\epsilon}|)^{\frac{N+2}{2}+1+\theta}}\right)},
\end{align*}
where $\theta$ is a small constant.

 We set
 $
 (  U_{1,\epsilon},V_{1,\epsilon})=(U_{\mu_{1,\epsilon},P_{1,\epsilon}}(x), V_{\mu_{1,\epsilon},P_{1,\epsilon}}(x)),
 $
 \begin{align*}
    (\Psi_{0},\Phi_{0}) = \bigg(x\cdot\nabla U_{1,0}+\frac{N U_{1,0}}{q+1},x\cdot\nabla V_{1,0}+\frac{N V_{1,0}}{p+1}    \bigg),
\end{align*}
and
\begin{align*}
    (\Psi_{l},\Phi_{l}) = ( \frac{\partial U_{1,0}}{\partial x_{l}},\frac{\partial V_{1,0}}{\partial x_{l}} ),\,\,\,\hbox{ for }l=1,\cdots,N.
\end{align*}
Also let
 $(\Psi_{1,\epsilon,i},\Phi_{1,\epsilon,i}) = (\mu_{1,\epsilon}^{-\frac{N}{q+1}-1}\Psi_{i}(\mu_{1,\epsilon}^{-1}(x-P_{1,\epsilon}))  , \mu_{1,\epsilon}^{-\frac{N}{p+1}-1}\Phi_{i}(\mu_{1,\epsilon}^{-1}(x-P_{1,\epsilon}))      ) ,$
and  $( P\Psi_{1,\epsilon,i},P\Phi_{1,\epsilon,i}  ) $ be the unique smooth solution of the system
\begin{equation*}
    \begin{cases}
    -\Delta P\Psi_{1,\epsilon,i} =p V_{1,\epsilon}^{p-1}P\Phi_{1,\epsilon,i},  &\hbox{ in }\Omega,
    \\ -\Delta P\Phi_{1,\epsilon,i} =p U_{1,\epsilon}^{q-1}P\Psi_{1,\epsilon,i},  &\hbox{ in }\Omega,
    \\P\Psi_{1,\epsilon,i}=P\Phi_{1,\epsilon,i}=0, &\hbox{ in }\partial\Omega,
    \end{cases}
\end{equation*}
for $i=1,\cdots,N $.

We consider the following problem:
\begin{equation}\label{3.1}
    \begin{cases}
    \displaystyle -\Delta \tilde{\psi}_{\epsilon} = h_{\epsilon}
    + p V_{1,\epsilon}^{p-1}\tilde{\phi}_{\epsilon} +\sum_{i=0}^{N}c_{i,\epsilon}p V_{1,\epsilon}^{p-1}\Phi_{1,\epsilon,i}, &\hbox{ in }\Omega,
    \\ \displaystyle -\Delta\tilde{\phi}_{\epsilon}=g_{\epsilon} +q U_{1,\epsilon}^{q-1}\tilde{\psi}_{\epsilon} +{\epsilon}\beta_2\psi_{\epsilon}+\sum_{i=0}^{N}c_{i,\epsilon}q U_{1,\epsilon}^{q-1}\Psi_{1,\epsilon,i}, &\hbox{ in }\Omega,
    \\ \displaystyle \tilde{\psi}_{\epsilon}=\tilde{\phi}_{\epsilon} =0, &\hbox{ on }\partial\Omega,
    \\ \displaystyle\int_{\Omega}p V_{1,\epsilon}^{p-1}\Phi_{1,\epsilon,i}\tilde{\phi}_{\epsilon}+q U_{1,\epsilon}^{q-1}\Psi_{1,\epsilon,i}\tilde{\psi}_{\epsilon} dx =0.
    \end{cases}
\end{equation}

\begin{lemma}\label{le3.1}
Assume that $(c_{i,\epsilon},\tilde{\psi}_{\epsilon},\tilde{\phi}_{\epsilon})$ solves \eqref{3.1} for $ (h_{\epsilon},g_{\epsilon})$. If $||(h_{\epsilon},g_{\epsilon})||_{**,\epsilon}\to 0$, as $\epsilon\to 0$, then $||(\tilde{\psi}_{\epsilon},\tilde{\phi}_{\epsilon})||_{*,\epsilon}\to 0$,  as $\epsilon\to 0$.
\end{lemma}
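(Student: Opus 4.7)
\medskip

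The plan is to argue by contradiction using a blow-up/rescaling analysis. Suppose that $\|(h_\epsilon,g_\epsilon)\|_{**,\epsilon}\to 0$ but, passing to a subsequence, $\|(\tilde\psi_\epsilon,\tilde\phi_\epsilon)\|_{*,\epsilon}\geqslant c_0>0$. After normalizing we may assume $\|(\tilde\psi_\epsilon,\tilde\phi_\epsilon)\|_{*,\epsilon}=1$. The first step is to show that the Lagrange multipliers $c_{i,\epsilon}$ are negligible. Testing the first equation in \eqref{3.1} against $P\Phi_{1,\epsilon,j}$ and the second against $P\Psi_{1,\epsilon,j}$, summing, and using the orthogonality condition together with the expansions of $(P\Psi_{1,\epsilon,i},P\Phi_{1,\epsilon,i})$ from Section~2, one obtains a linear system for $(c_{i,\epsilon})$ whose coefficient matrix is, up to small terms, a non-degenerate diagonal matrix of the form $\mathrm{diag}(A_0,A_1,\dots,A_N)$ with $A_i\neq 0$. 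The right hand side involves the pairings of $(h_\epsilon,g_\epsilon)$ and of $\epsilon\beta_2\tilde\psi_\epsilon$ against $(\Psi_{1,\epsilon,i},\Phi_{1,\epsilon,i})$, which are controlled by $\|(h_\epsilon,g_\epsilon)\|_{**,\epsilon}+\epsilon=o(1)$. Hence $c_{i,\epsilon}=o(1)$.

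Next I would derive the pointwise estimates for $\tilde\psi_\epsilon$ and $\tilde\phi_\epsilon$ via the Green's function representation:
\begin{equation*}
\tilde\psi_\epsilon(x)=\int_{\Omega} G(x,y)\Bigl(h_\epsilon+pV_{1,\epsilon}^{p-1}\tilde\phi_\epsilon+\sum_i c_{i,\epsilon}pV_{1,\epsilon}^{p-1}\Phi_{1,\epsilon,i}\Bigr)\,dy,
\end{equation*}
and similarly for $\tilde\phi_\epsilon$. Using $G(x,y)\leqslant C|x-y|^{2-N}$ and the explicit decay rates of $U_{1,\epsilon}$ and $V_{1,\epsilon}$, one estimates each convolution against the weight functions defining $\|\cdot\|_{*,\epsilon}$. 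The key convolution inequalities are of the form
\begin{equation*}
\int_{\Omega}\frac{dy}{|x-y|^{N-2}(1+\mu_{1,\epsilon}^{-1}|y-P_{1,\epsilon}|)^{\tau}}\leqslant \frac{C\mu_{1,\epsilon}^{2}}{(1+\mu_{1,\epsilon}^{-1}|x-P_{1,\epsilon}|)^{\tau-2}},
\end{equation*}
valid for suitable $\tau$; these reduce everything to an estimate of the form
\begin{equation*}
\|(\tilde\psi_\epsilon,\tilde\phi_\epsilon)\|_{*,\epsilon}\leqslant C\|(h_\epsilon,g_\epsilon)\|_{**,\epsilon}+o(1)\,\|(\tilde\psi_\epsilon,\tilde\phi_\epsilon)\|_{*,\epsilon}+\epsilon C\|\tilde\psi_\epsilon\|_{*,\epsilon}+\text{multiplier contribution}.
\end{equation*}
Everything on the right is either $o(1)$ or absorbable into the left, \emph{except} the contribution from the nonlinear terms $pV_{1,\epsilon}^{p-1}\tilde\phi_\epsilon$ and $qU_{1,\epsilon}^{q-1}\tilde\psi_\epsilon$ in a compact rescaled region around $P_{1,\epsilon}$, which must be handled by a blow-up argument.

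For this I would rescale
\begin{equation*}
\widehat\psi_\epsilon(y):=\mu_{1,\epsilon}^{\frac{N}{q+1}}\tilde\psi_\epsilon(\mu_{1,\epsilon}y+P_{1,\epsilon}),\qquad \widehat\phi_\epsilon(y):=\mu_{1,\epsilon}^{\frac{N}{p+1}}\tilde\phi_\epsilon(\mu_{1,\epsilon}y+P_{1,\epsilon}),
\end{equation*}
so that $|\widehat\psi_\epsilon(y)|\leqslant (1+|y|)^{-\frac{N(p+1)}{2(q+1)}-\theta}$ and $|\widehat\phi_\epsilon(y)|\leqslant (1+|y|)^{-\frac{N-2}{2}-1-\theta}$. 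The rescaled system reads, after using $c_{i,\epsilon}=o(1)$ and $\|(h_\epsilon,g_\epsilon)\|_{**,\epsilon}=o(1)$,
\begin{equation*}
-\Delta\widehat\psi_\epsilon=pV_{1,0}^{p-1}\widehat\phi_\epsilon+o(1),\qquad -\Delta\widehat\phi_\epsilon=qU_{1,0}^{q-1}\widehat\psi_\epsilon+o(1)
\end{equation*}
on any compact set, with the $\epsilon\beta_2\psi_\epsilon$-term becoming $\epsilon\beta_2\mu_{1,\epsilon}^{2}\widehat\psi_\epsilon=o(1)$ by choice of $\mu_{1,\epsilon}$. Standard elliptic regularity then gives $(\widehat\psi_\epsilon,\widehat\phi_\epsilon)\to(\widehat\psi,\widehat\phi)$ locally in $C^1$ where $(\widehat\psi,\widehat\phi)$ solves the kernel equation on $\mathbb{R}^N$ and lies in $\dot W^{2,(p+1)/p}\times\dot W^{2,(q+1)/q}$ thanks to the decay guaranteed by the weighted norms. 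By the classification Lemma in Section~2, $(\widehat\psi,\widehat\phi)$ is a linear combination of $(\Psi_j,\Phi_j)$, $j=0,\dots,N$. Passing the orthogonality condition in \eqref{3.1} to the limit forces all the coefficients to vanish, so $(\widehat\psi,\widehat\phi)\equiv 0$. Combined with the convolution inequality above, which gives $\|(\tilde\psi_\epsilon,\tilde\phi_\epsilon)\|_{*,\epsilon}=o(1)$ away from $P_{1,\epsilon}$, and the uniform convergence on compact rescaled balls, this contradicts $\|(\tilde\psi_\epsilon,\tilde\phi_\epsilon)\|_{*,\epsilon}=1$.

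The main obstacle, as stressed in the introduction, is the asymmetry produced by the Hamiltonian structure with $p\neq q$: the two components decay at different rates and $\int_{\mathbb{R}^N}V_{1,0}^p$ is not integrable. Concretely, the slow decay of $V_{1,\epsilon}$ forces a careful choice of the exponents $\frac{N(p+1)}{2(q+1)}+\theta$ and $\frac{N-2}{2}+1+\theta$ in the $*$- and $**$-norms so that both the convolution inequalities used in the pointwise step, and the decay needed for the rescaled limit to belong to the correct Sobolev space, hold simultaneously; verifying this compatibility, and correctly handling the boundary contribution when applying the convolution estimate to the second equation (where the integrand involves $qU_{1,\epsilon}^{q-1}\tilde\psi_\epsilon$ with a faster-decaying multiplier but a slower-decaying solution factor), is the technical core of the proof.
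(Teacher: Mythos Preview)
Your strategy is exactly the paper's: contradiction, Green's representation with weighted convolution estimates yielding an extra $\theta_1>0$ of decay, testing against $(P\Psi_{1,\epsilon,j},P\Phi_{1,\epsilon,j})$ to control the multipliers, then rescaling to the linearized limit system on $\mathbb{R}^N$ and using the non-degeneracy classification plus the orthogonality to conclude. One point needs sharpening: because $(\Psi_{1,\epsilon,i},\Phi_{1,\epsilon,i})$ carry an extra factor $\mu_{1,\epsilon}^{-1}$, the Gram matrix in your testing step is $\mu_{1,\epsilon}^{-2}\,\mathrm{diag}(A_0,\dots,A_N)+o(\mu_{1,\epsilon}^{-2})$ while the right-hand side pairings are $o(\mu_{1,\epsilon}^{-1})$, so the correct conclusion is $c_{i,\epsilon}=o(\mu_{1,\epsilon})$, not merely $o(1)$; this sharper bound is precisely what is needed for the multiplier contribution in the Green's representation to be $o(1)$ against the $*$-weight.
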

\begin{proof}
We argue by contradiction. Suppose that there exist $(c_{i,\epsilon},\tilde{\psi}_{\epsilon},\tilde{\phi}_{\epsilon})$  solves system \eqref{3.1} for $ (h_{\epsilon},g_{\epsilon})$, such that $||(h_{\epsilon},g_{\epsilon})||_{**,\epsilon} \to 0$ and $||(\tilde{\psi}_{\epsilon},\tilde{\phi}_{\epsilon})||_{*,\epsilon} =1 $. We have
\begin{equation*}
    \begin{split}
        \tilde{\psi}_{\epsilon}(x) = \int_{\Omega}G(x,y)(h_{\epsilon}+ p V_{1,\epsilon}^{p-1}\tilde{\phi}_{\epsilon} +\sum_{i=0}^{N}c_{i,\epsilon}p V_{1,\epsilon}^{p-1}\Phi_{1,\epsilon,i})dy,
    \end{split}
\end{equation*}
where $G(x,y)$ be the Green's function of the Laplacian $-\Delta$ in $\Omega$ with Dirichlet
boundary condition.

It is easy to check that
\begin{equation*}
    \begin{split}
        | \int_{\Omega}G(x,y) h_{\epsilon} | \leq C||(h_{\epsilon},g_{\epsilon})||_{**,\epsilon}\frac{\mu_{1,\epsilon}^{-\frac{N}{q+1}}}{(1+\mu_{1,\epsilon}^{-1}|x-P_{1,\epsilon}|)^{\frac{N(p+1)}{2(q+1)}+\theta}}.
    \end{split}
\end{equation*}
Since $\frac{N-2}{2} + (N-2)(p-1) +1-2- \frac{N(p+1)}{2(q+1)}= \frac{(N-2)(p-1)}{2}>0 $, we have
\begin{equation*}
    \begin{split}
        |\int_{\Omega}G(x,y) p V_{1,\epsilon}^{p-1}\tilde{\phi}_{\epsilon}| \leq C\frac{\mu_{1,\epsilon}^{-\frac{N}{q+1}} }{ (1+\mu_{1,\epsilon}^{-1}|x-P_{1,\epsilon}|)^{\frac{N(p+1)}{2(q+1)}+\theta+ \theta_1}    },
    \end{split}
\end{equation*}
where $\theta_1 > 0$ is a small constant. Similarly to the proof of Lemma 4.4 in \cite{Kim-Pis}, we can proof $c_{i,\epsilon} = o(\mu_{\epsilon})$. Due to $  (N-2)p-2 -\frac{N(p+1)}{2(q+1)}-\theta =\frac{N(p+1)}{2(q+1)}-\theta>0$, we also have
\begin{equation*}
\left|\int_{\Omega}G(x,y) \sum_{i=0}^{N}c_{i,\epsilon}p V_{1,\epsilon}^{p-1}\Phi_{1,\epsilon,i}\right|
        \leq C o(1)  \frac{\mu_{1,\epsilon}^{-\frac{N}{q+1}} }{ (1+\mu_{1,\epsilon}^{-1}|x-P_{1,\epsilon}|)^{\frac{N(p+1)}{2(q+1)}+\theta}}.
\end{equation*}
Because of $ \frac{N(p+1)q}{q+1} -2 > \frac{N}{2}+\theta$, we get
\begin{equation*}
    \begin{split}
        |\int_{\Omega}G(x,y) \sum_{i=0}^{N}c_{i,\epsilon}p U_{1,\epsilon}^{q-1}\Psi_{1,\epsilon,i}|= o(1)  \frac{\mu_{1,\epsilon}^{-\frac{N}{q+1}} }{ (1+\mu_{1,\epsilon}^{-1}|x-P_{1,\epsilon}|)^{\frac{N-2}{2}+1+\theta}    }.
    \end{split}
\end{equation*}

Analogous, we  have
\begin{equation*}
    \begin{split}
        \tilde{\phi}_{\epsilon}(x) = \int_{\Omega}G(x,y)(g_{\epsilon} +q U_{1,\epsilon}^{q-1}\tilde{\psi}_{\epsilon} +{\epsilon}\beta_2\psi_{\epsilon}+\sum_{i=0}^{N}c_{i,\epsilon}q U_{1,\epsilon}^{q-1}\Psi_{1,\epsilon,i})dy.
    \end{split}
\end{equation*}
 By direct calculations,
\begin{equation*}
    \begin{split}
         |\int_{\Omega}G(x,y)g_{\epsilon}|\leq C||(h_{\epsilon},g_{\epsilon})||_{**,\epsilon}\frac{\mu_{1,\epsilon}^{-\frac{N}{p+1}}}{(1+\mu_{1,\epsilon}^{-1}|x-P_{1,\epsilon}|)^{\frac{N-2}{2}+1+\theta}},
    \end{split}
\end{equation*}
and
\begin{equation*}
    \begin{split}
        |\int_{\Omega}G(x,y) q U_{1,\epsilon}^{q-1}\tilde{\psi}_{\epsilon}  | \leq C\frac{\mu_{1,\epsilon}^{-\frac{N}{p+1}} }{ (1+\mu_{1,\epsilon}^{-1}|x-P_{1,\epsilon}|)^{\frac{N-2}{2}+1+\theta+ \theta_1}    },
    \end{split}
\end{equation*}
where we have used the fact that $ \frac{N(p+1)}{q+1}(q-\frac{1}{2}) -2 > \frac{N-2}{2}+1$.

Since $\mu_{1,\epsilon} \leq \frac{C}{(1+\mu_{1,\epsilon}^{-1}|x-P_{1,\epsilon}|)}$, it holds
\begin{equation*}
    \begin{split}
         |\int_{\Omega}G(x,y) {\epsilon}\beta_2\psi_{\epsilon}| \leq &C\frac{\mu_{1,\epsilon}^{2+\frac{N(p-q+1)}{q+1}}}{(1+\mu_{1,\epsilon}^{-1}|x-P_{1,\epsilon}|)^{\frac{N(p+1)}{2(q+1)}+\theta-2}}
         \\\leq &o(1) \frac{\mu_{1,\epsilon}^{-\frac{N}{p+1}} }{ (1+\mu_{1,\epsilon}^{-1}|x-P_{1,\epsilon}|)^{\frac{N-2}{2}+1+\theta}    }.
    \end{split}
\end{equation*}

By testing with $  (P\Phi_{1,{\epsilon},j},P\Psi_{1,\epsilon,j})$, we obtain
\begin{equation*}
    \begin{split}
        &\int_{\Omega}\bigg( p(V_{1,\epsilon}^{p-1}-     PV_{1,\epsilon}^{p-1}) P\Phi_{1,{\epsilon},j}\tilde{\phi}_{\epsilon} + q(  U_{1,\epsilon}^{q-1} -PU_{d_{1,\epsilon},P_{1,\epsilon}}^{q-1})P\Psi_{1,\epsilon,j}\tilde{\psi}_{\epsilon}\bigg)
        -\int_{\Omega}{\epsilon}\beta_2\psi_{\epsilon}P\Psi_{1,{\epsilon},j}\tilde{\psi}_{\epsilon}
        \\=&\int_{\Omega}h_{\epsilon} P\Phi_{1,{\epsilon},j} + g_{\epsilon} P\Psi_{1,{\epsilon},j}
        +\int_{\Omega}\sum_{i=0}^{N}c_{i,\epsilon}(  pV_{1,\epsilon}^{p-1}\Phi_{1,\epsilon,i} P\Phi_{1,{\epsilon},j} +q U_{1,\epsilon}^{q-1}\Psi_{1,\epsilon,i}P\Phi_{1,{\epsilon},j}     ).
    \end{split}
\end{equation*}
We shall estimate the LHS of the above equations. They are divided into three parts.
\begin{equation}\label{3.11}
    \begin{split}
        |\int_{\Omega}(V_{1,\epsilon}^{p-1}-     PV_{1,\epsilon}^{p-1})P\Phi_{1,{\epsilon},j}\tilde{\phi}_{\epsilon}|
         \leq &C\int_{\Omega}\mu_{1,\epsilon}^{\frac{N}{q+1}}\frac{\mu_{1,\epsilon}^{-1  -\frac{Np}{p+1}}   }{   (1  +\mu_{1,\epsilon}^{-1}|  x-P_{1,\epsilon} | )^{(N-2)(p-1)+\frac{N}{2}+\theta }  }
        \\ \leq &C\mu_{1,\epsilon}^{-3+ (N-2)(p-1)+\frac{N}{2}+\theta} = o( \mu_{1,\epsilon}^{-1} ),
    \end{split}
\end{equation}
\begin{equation*}
    \begin{split}
        |\int_{\Omega}(U_{1,\epsilon}^{q-1}-     PU_{d_{1,\epsilon},P_{1,\epsilon}}^{q-1})P\Psi_{1,{\epsilon},j}\tilde{\psi}_{\epsilon}|
        \leq &C\int_{\Omega}\mu_{1,\epsilon}^{\frac{Np}{q+1}} \frac{\mu_{1,\epsilon}^{-1  -\frac{Nq}{q+1}}   }{   (1  +\mu_{1,\epsilon}^{-1}|  x-P_{1,\epsilon} | )^{\frac{N(p+1)}{q+1}(q-\frac{1}{2})+\theta }  }
        \\ \leq &C\mu_{1,\epsilon}^{\frac{N(p+1)}{q+1}(q+\frac{1}{2}) -N  +\theta }=o( \mu_{1,\epsilon}^{-1} ),
    \end{split}
\end{equation*}
and
\begin{equation*}
    \begin{split}
       | \int_{\Omega}{\epsilon}\beta_2\psi_{\epsilon}P\Psi_{1,{\epsilon},j}\tilde{\psi}_{\epsilon}|
      \leq &C\mu_{1,\epsilon}^{\frac{N(p-q+2)}{q+1}}\int_{\Omega}\frac{\mu_{1,\epsilon}^{-1  -\frac{2N}{q+1}}   }{   (1  +\mu_{1,\epsilon}^{-1}|  x-P_{1,\epsilon} | )^{\frac{3N(p+1)}{2(q+1)}+\theta }  }
       \\ \leq &C\mu_{1,\epsilon}^{   -1 +\frac{N(p+1)}{q+1}} = o( \mu_{1,\epsilon}^{-1} ).
    \end{split}
\end{equation*}
Next, we shall estimate the RHS of the equations \eqref{3.11}. We get that
\begin{equation*}
    \begin{split}
       |\int_{\Omega}h_{\epsilon} P\Phi_{1,{\epsilon},j} |
     \leq C ||(h_{\epsilon},g_{\epsilon})||_{**,\epsilon}  \mu_{1,\epsilon}^{-1}\int_{\Omega} \frac{\mu_{1,\epsilon}^{N}}{  (1  +\mu_{1,\epsilon}^{-1}|  x-P_{1,\epsilon} | )^{  N+\frac{N(p+1)}{2(q+1)}+\theta} }
       =o( \mu_{1,\epsilon}^{-1} ),
    \end{split}
\end{equation*}
and
\begin{equation*}
    \begin{split}
        |\int_{\Omega}g_{\epsilon} P\Phi_{1,{\epsilon},j} | = o( \mu_{1,\epsilon}^{-1} ).
    \end{split}
\end{equation*}
From Lemma 3.1 of \cite{Kim-Pis}, we have
\begin{equation*}
    \begin{split}
        \int_{\Omega}(  pV_{1,\epsilon}^{p-1}\Phi_{1,\epsilon,i} P\Phi_{1,{\epsilon},j} +q U_{1,\epsilon}^{q-1}\Psi_{1,\epsilon,i}P\Phi_{1,{\epsilon},j}     ) = \mu_{1,\epsilon}^{-2}C_{i}\delta_{ij}+ o(\mu_{1,\epsilon}^{-2}).
    \end{split}
\end{equation*}
So we have $c_{i,\epsilon} = o( \mu_{1,\epsilon} )$ for $i=0,\cdots,N.$

Hence,  we have
\begin{equation}\label{3.3}
    \begin{split}
       & |\tilde{\psi}_{\epsilon}(x)|\left( \frac{\mu_{1,\epsilon}^{-\frac{N}{q+1}}}{(1+\mu_{1,\epsilon}^{-1}|x-P_{1,\epsilon}|)^{\frac{N(p+1)}{2(q+1)}+\theta}} \right)^{-1} + |\tilde{\phi}_{\epsilon}(x)|\left(\frac{\mu_{1,\epsilon}^{-\frac{N}{p+1}}}{(1+\mu_{1,\epsilon}^{-1}|x-P_{1,\epsilon}|)^{\frac{N-2}{2}+1+\theta}}\right)^{-1}
        \\\leq &C\left[ o(1) + \frac{(1+\mu_{1,\epsilon}^{-1}|x-P_{1,\epsilon}|)^{\frac{N(p+1)}{2(q+1)}+\theta}}{(1+\mu_{1,\epsilon}^{-1}|x-P_{1,\epsilon}|)^{\frac{N(p+1)}{2(q+1)}+\theta+\theta_1}} + \frac{(1+\mu_{1,\epsilon}^{-1}|x-P_{1,\epsilon}|)^{\frac{N-2}{2}+1+\theta}}{(1+\mu_{1,\epsilon}^{-1}|x-P_{1,\epsilon}|)^{\frac{N-2}{2}+1+\theta+\theta_1}}\right].
    \end{split}
\end{equation}
Since $||(\tilde{\psi}_{\epsilon},\tilde{\phi}_{\epsilon} )||_{*,\epsilon}=1$, we obtain from \eqref{3.3} that there is $R>0$, such that
\begin{equation}\label{3.4}
    \begin{split}
        \sup_{x\in B_{\mu_{1,\epsilon} R}(P_{1,\epsilon})}|\tilde{\phi}_{\epsilon}(x)|+|\tilde{\psi}_{\epsilon}(x)| \geq a >0.
    \end{split}
\end{equation}
Let $(\bar{\tilde{\psi}}_{\epsilon},\bar{\tilde{\phi}}_{\epsilon}) = (\mu_{1,\epsilon}^{\frac{N}{q+1}}\tilde{\psi}_{\epsilon}(\mu_{1,\epsilon} x + P_{1,\epsilon} )  , \mu_{1,\epsilon}^{\frac{N}{p+1}}\tilde{\phi}_{\epsilon}(\mu_{1,\epsilon} x + P_{1,\epsilon}) )$, then $(\bar{\tilde{\psi}}_{\epsilon},\bar{\tilde{\phi}}_{\epsilon}) \to (\tilde{\psi},\tilde{\phi}) $ in $C^{1}_{loc}(\mathbb{R}^{N})$ and
\begin{equation}\label{3.2}
    \begin{cases}
    -\Delta \tilde{\psi} = p V_{1,0}^{p-1}\tilde{\phi}, \hbox{ in } \mathbb{R}^{N},
    \\-\Delta \tilde{\phi} = q U_{1,0}^{q-1}\tilde{\psi},  \hbox{ in } \mathbb{R}^{N}.
    \end{cases}
\end{equation}
Since $[(N-2)(p-1)+\frac{N}{2}+\theta]\frac{p+1}{p}>N$ and $ (\frac{N(p+1)(q-\frac{1}{2})}{q+1} +\theta)\frac{q+1}{q}>N$ for $ p\in (1,\frac{N-1}{N-2})$, we have
\begin{equation*}
    \begin{split}
        \int_{\mathbb{R}^{N}}  |V_{1,0}^{p-1}\tilde{\phi}|^{\frac{p+1}{p}} \leq C, \hbox{ } \int_{\mathbb{R}^{N}}  |U_{1,0}^{p-1}\tilde{\psi}|^{\frac{q+1}{q}} \leq C.    \end{split}
\end{equation*}
So $ (\tilde{\psi},\tilde{\phi})  \in \dot{W}^{2,\frac{p+1}{p}}\times \dot{W}^{2,\frac{q+1}{q}}$.
Recalling \eqref{3.1}, we can check that $( \tilde{\psi},\tilde{\phi})$ is perpendicular to the kernel of \eqref{3.2}. So $( \tilde{\psi},\tilde{\phi}) = (0,0)$.  This is a contradiction to \eqref{3.4}.
\end{proof}

 We denote that $\mathcal{L}_{\epsilon}( g_{\epsilon},h_{\epsilon})$ is the solution of problem \eqref{3.1}. With the help of Lemma \ref{le3.1}, similar to Proposition 4.1 in \cite{PFM}, we have the following proposition.

\begin{proposition}
There exist $\epsilon_1 > 0$ and a constant $C > 0$, independent of $\epsilon$, such that for all $\epsilon \in(0,\epsilon_1)$ and all $(g,h) \in (L^{\infty}( \Omega))^2$, problem \eqref{3.1} has a unique solution $( \tilde{\psi},\tilde{\phi} ) = \mathcal{L}_{\epsilon}(g,h )$ with $\|(\tilde{\psi},\tilde{\phi})\|_{*,\epsilon} \leq C\|(g,h)\|_{**,\epsilon}, \; |c_{i,\epsilon}| \leq C\mu\|(g,h)\|_{**,\epsilon}$.
\end{proposition}

In the following, we will solve the following problem:
\begin{equation}\label{pro3.3}
\begin{cases}
\displaystyle-\Delta (PU_{d_{1,\epsilon},P_{1,\epsilon}} + \tilde{\psi}_{\epsilon} ) = ( PV_{1,\epsilon} +\tilde{\phi}_{\epsilon} )^{p} + \sum_{i=0}^{N}c_{i,\epsilon}p V_{1,\epsilon}^{p-1}\Phi_{1,\epsilon,i}, &\hbox{ in }\Omega,
\\ \displaystyle- \Delta ( PV_{1,\epsilon} +\tilde{\phi}_{\epsilon} ) = (PU_{d_{1,\epsilon},P_{1,\epsilon}} + \tilde{\psi}_{\epsilon} )^{q} +{\epsilon}\beta_2\psi_{\epsilon}+\sum_{i=0}^{N}c_{i,\epsilon}q U_{1,\epsilon}^{q-1}\Psi_{1,\epsilon,i}, &\hbox{ in }\Omega,
\\  \tilde{\psi}_{\epsilon}=\tilde{\phi}_{\epsilon} =0, &\hbox{ on }\partial\Omega,
\\ \displaystyle\int_{\Omega}p V_{1,\epsilon}^{p-1}\Phi_{1,\epsilon,i}\tilde{\phi}_{\epsilon}+q U_{1,\epsilon}^{q-1}\Psi_{1,\epsilon,i}\tilde{\psi}_{\epsilon} dx =0.
\end{cases}
\end{equation}
That is
\begin{equation*}
    \begin{cases}
    \displaystyle -\Delta \tilde{\psi}_{\epsilon} = (PV_{1,\epsilon}+\tilde{\phi}_{\epsilon})^p -PV^{p}_{1,\epsilon} - p PV^{p-1}_{1,\epsilon}\tilde{\phi}_{\epsilon}+ p V_{1,\epsilon}^{p-1}\tilde{\phi}_{\epsilon} +\sum_{i=0}^{N}c_{i,\epsilon}p V_{1,\epsilon}^{p-1}\Phi_{1,\epsilon,i}, &\hbox{ in }\Omega,
    \\ \displaystyle -\Delta\phi_{\epsilon}=(PU_{d_{1,\epsilon},P_{1,\epsilon}} + \tilde{\psi}_{\epsilon})^q - PU^q_{d_{1,\epsilon},P_{1,\epsilon}}  - q PU^{q-1}_{d_{1,\epsilon},P_{1,\epsilon}}\tilde{\psi}_{\epsilon}+U^{q}_{1,\epsilon}
    \\- PU_{d_{1,\epsilon},P_{1,\epsilon}} + \epsilon\beta_2PU_{d_{1,\epsilon},P_{1,\epsilon}}
    +q U_{1,\epsilon}^{q-1}\tilde{\psi}_{\epsilon} +{\epsilon}\beta_2\psi_{\epsilon}+\sum_{i=0}^{N}c_{i,\epsilon}q U_{1,\epsilon}^{q-1}\Psi_{1,\epsilon,i}, &\hbox{ in }\Omega,
    \\  \tilde{\psi}_{\epsilon}=\tilde{\phi}_{\epsilon} =0, &\hbox{ on }\partial\Omega,
    \\ \displaystyle\int_{\Omega}p V_{1,\epsilon}^{p-1}\Phi_{1,\epsilon,i}\tilde{\phi}_{\epsilon}+q U_{1,\epsilon}^{q-1}\Psi_{1,\epsilon,i}\tilde{\psi}_{\epsilon} dx =0.
    \end{cases}
\end{equation*}

Now we set
$$l_{\epsilon} = (0, U^{q}_{1,\epsilon}- PU_{d_{1,\epsilon},P_{1,\epsilon}}^{q} - \epsilon\beta_2PU_{d_{1,\epsilon},P_{1,\epsilon}} ),$$
\begin{align*}
&N(\tilde{\psi}_{\epsilon},\tilde{\phi}_{\epsilon} )= \left( (PV_{1,\epsilon}+\tilde{\phi}_{\epsilon})^p -PV^{p}_{1,\epsilon} - p PV^{p-1}_{1,\epsilon}\tilde{\phi}_{\epsilon}, (PU_{d_{1,\epsilon},P_{1,\epsilon}} + \tilde{\psi}_{\epsilon})^q - PU^q_{d_{1,\epsilon},P_{1,\epsilon}}  - q PU^{q-1}_{d_{1,\epsilon},P_{1,\epsilon}}\tilde{\psi}_{\epsilon}\right),
\end{align*}
and $A = \frac{N(p+1)q}{q+1} -\frac{N+2}{2}-1-\theta$.
\begin{lemma}
There exists a positive constant $C$ such that
$$||N(\tilde{\psi}_{\epsilon},\tilde{\phi}_{\epsilon} )||_{**,\epsilon} \leq C ||(\tilde{\psi}_{\epsilon},\tilde{\phi}_{\epsilon} )||_{*,\epsilon}^{\min(p,q)}.$$
\end{lemma}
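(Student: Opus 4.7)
\emph{Proof plan.} Since $p\in(1,\tfrac{N-1}{N-2})\subset(1,2)$ and $q\ge\tfrac{N+2}{N-2}\ge 2$, I build everything on the elementary algebraic bounds
\begin{equation*}
\bigl|(a+b)^p-a^p-pa^{p-1}b\bigr|\le C|b|^p,\qquad
\bigl|(a+b)^q-a^q-qa^{q-1}b\bigr|\le C\bigl(a^{q-2}b^2\,\chi_{\{|b|\le a\}}+|b|^q\,\chi_{\{|b|>a\}}\bigr),
\end{equation*}
valid for $a\ge 0,\ b\in\mathbb R$. Applied with $(a,b)=(PV_{1,\epsilon},\tilde\phi_\epsilon)$ and $(a,b)=(PU_{d_{1,\epsilon},P_{1,\epsilon}},\tilde\psi_\epsilon)$ respectively, these reduce the lemma to a pointwise comparison of three concrete quantities against the corresponding entries of the $**$-weight: the term $|\tilde\phi_\epsilon|^p$ from the first component, and the terms $PU_{d_{1,\epsilon},P_{1,\epsilon}}^{q-2}\tilde\psi_\epsilon^{\,2}\chi_{\{|\tilde\psi_\epsilon|\le PU\}}$ and $|\tilde\psi_\epsilon|^q\chi_{\{|\tilde\psi_\epsilon|>PU\}}$ from the second.

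The pure-power pieces are routine: I replace $|\tilde\phi_\epsilon|,|\tilde\psi_\epsilon|$ by the $*$-weights to get $\|(\tilde\psi_\epsilon,\tilde\phi_\epsilon)\|_{*,\epsilon}^{\,p}$ (resp.\ $\|(\cdot)\|_{*,\epsilon}^{\,q}$) times an explicit spatial factor, and the $q$-power case is absorbed into the $p$-power case using $q\ge p$ together with the standing smallness $\|(\cdot)\|_{*,\epsilon}\le 1$. The genuinely delicate piece is the mixed term $PU^{q-2}\tilde\psi_\epsilon^{\,2}$ on $\{|\tilde\psi_\epsilon|\le PU\}$. Here I crucially use $2-p\ge 0$ to interpolate
\begin{equation*}
PU_{d_{1,\epsilon},P_{1,\epsilon}}^{q-2}\tilde\psi_\epsilon^{\,2}
=PU_{d_{1,\epsilon},P_{1,\epsilon}}^{q-2}|\tilde\psi_\epsilon|^{2-p}\,|\tilde\psi_\epsilon|^p
\le PU_{d_{1,\epsilon},P_{1,\epsilon}}^{q-p}|\tilde\psi_\epsilon|^p,
\end{equation*}
which trades the quadratic factor for a $p$-power one at the cost of the extra factor $PU^{q-p}$.

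What remains is to check the three resulting pointwise inequalities. Setting $\alpha=(N-2)p-2$ and $\tau=\mu_{1,\epsilon}^{-1}|x-P_{1,\epsilon}|$, the critical hyperbola gives the key identity $\tfrac{N(p+1)}{q+1}=\alpha$, whence $\tfrac{Np}{p+1}=\tfrac{N}{q+1}+2$ and $\tfrac{Nq}{q+1}=\tfrac{N}{p+1}+2$; consequently all $\mu$-powers on the two sides match identically. Only the polynomial rates in $(1+\tau)$ remain, and using $q\alpha=N+2(p+1)$ they collapse to three scalar inequalities: $(p-1)(1+\theta)\ge 0$, $\tfrac{N}{2}+3p-\tfrac{(N-2)p^2}{2}\ge(1-p)\theta$, and $(p-1)+(q-1)\theta\ge 0$. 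The first and third are immediate from $p>1$; the middle one, the main obstacle, defines a concave function of $p$ whose values at the endpoints $p=1$ and $p=\tfrac{N-1}{N-2}$ equal $4$ and $\tfrac{6N-7}{2(N-2)}$ respectively, both strictly positive for $N\ge 3$, so concavity forces positivity on the entire admissible interval. The pivotal ingredient throughout is the constraint $p\le 2$: without it the interpolation controlling the mixed term is unavailable, and the target exponent $\min(p,q)=p$ could not be reached.
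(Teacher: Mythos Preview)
Your argument is correct, and on the second component it is in fact more careful than the paper's. For the first component both proofs are identical: the pointwise bound $|(PV+\tilde\phi)^p-PV^p-pPV^{p-1}\tilde\phi|\le C|\tilde\phi|^p$ (valid because $1<p<2$) followed by the exponent check $(p-1)(1+\theta)\ge 0$.

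For the second component the paper simply asserts
\[
\bigl|(PU_{d_{1,\epsilon},P_{1,\epsilon}}+\tilde\psi_\epsilon)^q-PU_{d_{1,\epsilon},P_{1,\epsilon}}^{\,q}-qPU_{d_{1,\epsilon},P_{1,\epsilon}}^{\,q-1}\tilde\psi_\epsilon\bigr|\le C|\tilde\psi_\epsilon|^q,
\]
and then checks only the inequality $\frac{N(p+1)q}{2(q+1)}+q\theta>\frac{N+4}{2}+\theta$. Since $q>2$, that algebraic bound is not valid in general: on the set where $|\tilde\psi_\epsilon|\ll PU$ the Taylor remainder is genuinely of order $PU^{\,q-2}\tilde\psi_\epsilon^{\,2}$, not $|\tilde\psi_\epsilon|^q$. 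Your interpolation
\[
PU^{\,q-2}|\tilde\psi_\epsilon|^{2}=PU^{\,q-2}|\tilde\psi_\epsilon|^{2-p}|\tilde\psi_\epsilon|^{p}\le PU^{\,q-p}|\tilde\psi_\epsilon|^{p}\qquad\text{on }\{|\tilde\psi_\epsilon|\le PU\},
\]
combined with $0\le PU_{d_{1,\epsilon},P_{1,\epsilon}}\le U_{1,\epsilon}\le C\mu_{1,\epsilon}^{-N/(q+1)}(1+\tau)^{-\alpha}$, is exactly the missing ingredient. The ensuing exponent inequality you isolate, namely $f(p)=\tfrac{N}{2}+3p-\tfrac{(N-2)p^2}{2}>0$ on $(1,\tfrac{N-1}{N-2})$, is new work not present in the paper and is verified correctly via concavity and the endpoint values $f(1)=4$, $f\bigl(\tfrac{N-1}{N-2}\bigr)=\tfrac{6N-7}{2(N-2)}$. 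Your explicit smallness assumption $\|(\tilde\psi_\epsilon,\tilde\phi_\epsilon)\|_{*,\epsilon}\le 1$, used to absorb the residual $|\tilde\psi_\epsilon|^q$ contribution on $\{|\tilde\psi_\epsilon|>PU\}$ into the $p$-power, is harmless: the paper itself needs the same hypothesis to pass from the pair of bounds $C\|\cdot\|_{*,\epsilon}^p$ and $C\|\cdot\|_{*,\epsilon}^q$ to the single bound $C\|\cdot\|_{*,\epsilon}^{\min(p,q)}$.
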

\begin{proof}
Using the fact that
$$
    |(PV_{1,\epsilon}+\tilde{\phi}_{\epsilon})^p -PV^{p}_{1,\epsilon} - p PV^{p-1}_{1,\epsilon}\tilde{\phi}_{\epsilon}| \leq C |\tilde{\phi}_{\epsilon}|^p,
$$
and
$$
    (\frac{N-2}{2}+1+\theta)p - \frac{N(p+1)}{2(q+1)}-2-\theta = (1+\theta)(p-1)>0,
$$
we have
\begin{equation*}
    \begin{split}
     &|(PV_{1,\epsilon}+\tilde{\phi}_{\epsilon})^p -PV^{p}_{1,\epsilon} - p PV^{p-1}_{1,\epsilon}\tilde{\phi}_{\epsilon}|\left(\frac{\mu_{1,\epsilon}^{-\frac{N}{q+1}-2}}{(1+\mu_{1,\epsilon}^{-1}|x-P_{1,\epsilon}|)^{\frac{N(p+1)}{2(q+1)}+2+\theta}}\right)^{-1}
     \\\leq& C||(\tilde{\psi}_{\epsilon},\tilde{\phi}_{\epsilon} )||_{*,\epsilon}^{p}\frac{(1+\mu_{1,\epsilon}^{-1}|x-P_{1,\epsilon}|)^{\frac{N(p+1)}{2(q+1)}+2+\theta} }{(1+\mu_{1,\epsilon}^{-1}|x-P_{1,\epsilon}|)^{(\frac{N-2}{2}+1+\theta)p}}
    \leq C||(\tilde{\psi}_{\epsilon},\tilde{\phi}_{\epsilon} )||_{*,\epsilon}^{p}.
    \end{split}
\end{equation*}
Similarly, due to
$
    |(PU_{d_{1,\epsilon},P_{1,\epsilon}} + \tilde{\psi}_{\epsilon})^q - PU^q_{d_{1,\epsilon},P_{1,\epsilon}}  - q PU^{q-1}_{d_{1,\epsilon},P_{1,\epsilon}}\tilde{\psi}_{\epsilon}| \leq C |\tilde{\psi}_{\epsilon}|^{q}
$
and $\frac{N(p+1)q}{2(q+1)} +q\theta > \frac{N+4}{2}+\theta$ ($p\in(1,\frac{N-1}{N-2}$)),
we deduce
\begin{equation*}
    \begin{split}
      & |(PU_{d_{1,\epsilon},P_{1,\epsilon}} + \tilde{\psi}_{\epsilon})^q - PU^q_{d_{1,\epsilon},P_{1,\epsilon}}  - q PU^{q-1}_{d_{1,\epsilon},P_{1,\epsilon}}\tilde{\psi}_{\epsilon}| (\frac{\mu_{1,\epsilon}^{-\frac{N}{p+1}-2}}{(1+\mu_{1,\epsilon}^{-1}|x-P_{1,\epsilon}|)^{\frac{N-2}{2}+3+\theta}})^{-1}
      \\\leq &C||(\tilde{\psi}_{\epsilon},\tilde{\phi}_{\epsilon} )||_{*,\epsilon}^{q}\frac{(1+\mu_{1,\epsilon}^{-1}|x-P_{1,\epsilon}|)^{\frac{N-2}{2}+3+\theta}}{(1+\mu_{1,\epsilon}^{-1}|x-P_{1,\epsilon}|)^{\frac{N(p+1)q}{2(q+1)} +q\theta}}
      \leq C||(\tilde{\psi}_{\epsilon},\tilde{\phi}_{\epsilon} )||_{*,\epsilon}^{q}.
    \end{split}
\end{equation*}
So
$
    ||N(\tilde{\psi}_{\epsilon},\tilde{\phi}_{\epsilon} )||_{**,\epsilon} \leq C ||(\tilde{\psi}_{\epsilon},\tilde{\phi}_{\epsilon} )||_{*,\epsilon}^{\min(p,q)}.
$
\end{proof}

\begin{lemma}
\begin{equation*}
    \begin{split}
        ||l_{\epsilon}||_{**,\epsilon}\leq C\mu^{\frac{N(p+1)q}{q+1}-\frac{N+2}{2}-1-\theta}\frac{\mu_{1,\epsilon}^{-\frac{N}{p+1}-2}}{( 1+\mu_{1,\epsilon}^{-1}|x-P_{1,\epsilon}|  )^{ \frac{N+2}{2}+1+\theta  }}.
    \end{split}
\end{equation*}
\end{lemma}
\begin{proof}
Since $\mu_{1,\epsilon} \leq \frac{C}{(1+\mu_{1,\epsilon}^{-1}|x-P_{1,\epsilon}|)}$ in $ \Omega$, we have
\begin{equation*}
    \begin{split}
        |U^{q}_{1,\epsilon}- PU^{q}_{d_{1,\epsilon},P_{1,\epsilon}} | \leq &C (U^{q-1}_{1,\epsilon}\mu_{1,\epsilon}^{\frac{Np}{q+1}} + \mu_{1,\epsilon}^{\frac{Npq}{q+1}})
        \\\leq& C\mu_{1,\epsilon}^{\frac{N(p+1)q}{q+1}-\frac{N+2}{2}-1-\theta}\frac{\mu_{1,\epsilon}^{-\frac{N}{p+1}-2}}{( 1+\mu_{1,\epsilon}^{-1}|x-P_{1,\epsilon}|  )^{ \frac{N+2}{2}+1+\theta  }},
    \end{split}
\end{equation*}
and
\begin{equation*}
    \begin{split}
        |\epsilon\beta_2 PU_{d_{1,\epsilon},P_{1,\epsilon}}   |\leq &C\mu^{\frac{N(p-q+2)}{q+1}}\frac{\mu_{1,\epsilon}^{-\frac{N}{q+1}}}{ ( 1+\mu_{1,\epsilon}^{-1}|x-P_{1,\epsilon}|  )^{  (N-2)p-2 }  }
        \\\leq &C\mu^{ \frac{N(p-q+2)}{q+1}-\frac{N}{q+1}+\frac{N}{p+1}+2  }\frac{\mu_{1,\epsilon}^{-\frac{N}{p+1}-2}}{ ( 1+\mu_{1,\epsilon}^{-1}|x-P_{1,\epsilon}|  )^{  \frac{N+2}{2}+1+\theta }  }
        \\\leq&C\mu^{\frac{N(p+1)q}{q+1}-\frac{N+2}{2}-1-\theta}\frac{\mu_{1,\epsilon}^{-\frac{N}{p+1}-2}}{( 1+\mu_{1,\epsilon}^{-1}|x-P_{1,\epsilon}|  )^{ \frac{N+2}{2}+1+\theta  }}.
    \end{split}
\end{equation*}

\end{proof}

Now, by standard discussion, we can deduce that there exists unique solution $( PU_{d_{1,\epsilon},P_{1,\epsilon}}+\tilde{\psi}_{\epsilon}, PV_{1,\epsilon}+\tilde{\phi}_{\epsilon} )$ solving  probelm \eqref{pro3.3} and $||( \tilde{\psi}_{\epsilon},\tilde{\phi_{\epsilon}})||_{**,\epsilon}\leq C \mu_{1,\epsilon}^{\frac{N(p+1)q}{q+1}-\frac{N+2}{2}-1-\theta}  $. 

Define
\begin{equation*}
    \begin{split}
        ||(f,g)||_{Y} = ||f||_{L^{\frac{p+1}{p}}(\Omega)} + ||g||_{L^{\frac{q+1}{q}}(\Omega)}.
    \end{split}
\end{equation*}

It follows from  Lemma 3.3 of \cite{Kim-Pis}, we have
\begin{lemma}\label{ele1}
It holds
$
        ||l_{\epsilon}||_{Y} \leq C_{1} \bigg(\mu^{\frac{Npq}{q+1}} + \mu^{\frac{N(p+1)}{q+1}} + {\epsilon}(\mu^{\frac{Np}{q+1}} + \mu^{\frac{N(q-1)}{q+1}}   )\beta_{2} \bigg).
   $
\end{lemma}

\begin{lemma}
$
    ||N(\tilde{ \psi}_{\epsilon},\tilde{ \phi}_{\epsilon})||_{Y}\leq C\mu_{1,\epsilon}^{Ap}.
$
\end{lemma}
\begin{proof}
 Direct computation shows that $$ ||N(\tilde{ \psi}_{\epsilon},\tilde{ \phi}_{\epsilon})||_{Y} \leq C(||\tilde{ \phi}_{\epsilon}^{p}||_{  L^{\frac{p+1}{p}}(\Omega)}+ ||\tilde{ \psi}_{\epsilon}^{q}||_{  L^{\frac{q+1}{q}}(\Omega)}),$$

\begin{equation*}
    \begin{split}
        ||\tilde{ \phi}_{\epsilon}^{p}||_{  L^{\frac{p+1}{p}}(\Omega)}\leq C \mu_{1,\epsilon}^{Ap}(\int_{\mathbb{R}^{N}}\frac{1}{(1+|y|)^{(\frac{N}{2}+\theta)(p+1)}}dy)^{\frac{p}{p+1}}
    \leq C\mu_{1,\epsilon}^{Ap},
    \end{split}
\end{equation*}
and
\begin{equation*}
    \begin{split}
        ||\tilde{ \psi}_{\epsilon}^{q}||_{  L^{\frac{q+1}{q}}(\Omega)}\leq C\mu_{1,\epsilon}^{Aq}(\int_{\mathbb{R}^{N}}\frac{1}{(1+|y|)^{\frac{N(p+1)}{2}}}dy)^{\frac{p}{p+1}}
       \leq  C\mu_{1,\epsilon}^{Aq}.
    \end{split}
\end{equation*}
Thus, $||N(\tilde{ \psi}_{\epsilon},\tilde{ \phi}_{\epsilon}))||_{Y}\leq C\mu_{1,\epsilon}^{Ap}$.
\end{proof}
Let $\theta$ be small enough, then we have $Ap> \frac{Npq}{p+1}$. Using Corollary 4.3 in \cite{Kim-Pis}, we can get that $ ||\tilde{ \psi}_{\epsilon},\tilde{ \phi}_{\epsilon}||_{X_{p,q}} \leq CC_{1} \mu^{\frac{Npq}{q+1}}$, where $C_1$ is the same as in Lemma \ref{ele1} and $ C$ is the same as in Proposition 4.6 in \cite{Kim-Pis}. Using the fixed point theorem, we get that the solution is unique. Thus from Proposition 4.6 in \cite{Kim-Pis}, we have $ ( \tilde{\psi}_{\epsilon},\tilde{\phi}_{\epsilon}) =  ( \psi_{\epsilon},\phi_{\epsilon})    $ where $( \psi_{\epsilon},\phi_{\epsilon}) $ is the same as in Theorem A. Thus $||( \psi_{\epsilon},\phi_{\epsilon})||_{**,\epsilon}\leq C \mu_{1,\epsilon}^{\frac{N(p+1)q}{q+1}-\frac{N+2}{2}-1-\theta}  $ .

\section{ Prove of Theorem \ref{Tmain}}

In this section, we will give the proof of Theorem \ref{Tmain}. For the simplicity, in the following, we will denote $d_{\beta_1,\beta_2,\alpha,N}$ as $d_0$.
Arguing by contradiction, suppose that there are $\epsilon_{k_{n}}\rightarrow 0$, satisfying $||(\eta_{n},\xi_{n})||_{*}=1$,  and
\begin{equation*}
    \begin{cases}
    -\Delta \eta_{n} = pv_{k_{n}}^{p-1}\xi_{n},&\hbox{ in }\Omega,
    \\ -\Delta \xi_{n} = qu_{k_{n}}^{q-1}\eta_{n} +\epsilon_{k_{n}}\beta_{2}\eta_{n},&\hbox{ in }\Omega,
    \\ \eta_{n}=\xi_{n}=0,&\hbox{ on }\partial\Omega.
    \end{cases}
\end{equation*}
Let $ (\tilde{\eta}_{n}(y),\tilde{\xi}_{n}(y)) = (\mu_{1,k_{n}}^{\frac{N}{q+1}} \eta_{n}(\mu_{1,k_{n}}y+P_{1,k_{n}}), \mu_{1,k_{n}}^{\frac{N}{p+1}} \xi_{n}(\mu_{1,k_{n}}y+P_{1,k_{n}})  ).$
\begin{lemma}\label{le4.1}
It holds
\begin{equation*}
    \begin{split}
       (\tilde{\eta}_{n}(y),\tilde{\xi}_{n}(y))\to \sum_{i=0}^{N}b_{i}(\Psi_{i},\Phi_{i}),
    \end{split}
\end{equation*}
uniformly in $C^{1}(B_{R}(0))$ for any $R>0$, where $b_{0},\cdots,b_{N}$ are some constants.
\end{lemma}
\begin{proof}
 In view of $| \tilde{\eta}_{n}  |+ |\tilde{\xi}_{n}|\leq C $, we may assume that $ (\tilde{\eta}_{n}(y),\tilde{\xi}_{n}(y))\to (f,g)$. Then $ (f,g)$  satisfies
 \begin{equation*}
     \begin{cases}
     -\Delta f = p V_{1,0}^{p-1} g, \hbox{ in }\mathbb{R}^{N},
     \\ -\Delta g = q U_{1,0}^{q-1} h, \hbox{ in }\mathbb{R}^{N},
     \end{cases}
 \end{equation*}
 which gives
 $
         (f,g) = \sum_{i=0}^{N}b_{i}(\Psi_{i},\Phi_{i}).
 $
\end{proof}

Set
\begin{align*}
\begin{cases}
    -\Delta \widetilde{P\Psi}_{1,k_{n},j}  = p(PV_{1,k_{n}})^{p-1}P\Phi_{1,k_{n},j} &\hbox{ in }\Omega
    \\\widetilde{P\Psi}_{1,k_{n},j} =0, &\hbox{ on }\partial \Omega,
\end{cases}
\end{align*}
for $j=1,\cdots,N$.
Then we decompose
\begin{equation*}
    \begin{split}
        (\eta_{n},\xi_{n}) = \sum_{i=0}^{N}b_{i,n}\mu_{1,k_{n}}(\widetilde{P\Psi}_{1,k_{n},i},P\Phi_{1,k_{n},i}) + (\eta_{n}^{*},\xi_{n}^{*}),
    \end{split}
\end{equation*}
where $  (\eta_{n}^{*},\xi_{n}^{*}) $ satisfies
\begin{equation*}
    \begin{split}
        \int_{\Omega}p(PV)_{1,k_{n}}^{p-1}P\Phi_{1,k_{n},i}\xi_{n}^{*} + qU_{1,k_{n}}^{q-1}\Psi_{1,k_{n},i}\eta_{n}^{*} dy =0.
    \end{split}
\end{equation*}
It follows form Lemma \ref{le4.1} that $ b_{0,n},\cdots,b_{N,n}$ are bounded.

Recalling that $||(\psi_{k_{n}},\phi_{k_{n}})||_{*}\leq C\mu_{1,k_{n}}^{\frac{N(p+1)q}{q+1}-\frac{N+2}{2}-1-\theta}$, and $A = \frac{N(p+1)q}{q+1} -\frac{N+2}{2}-1-\theta$.

\begin{lemma}
We have $\displaystyle ||( \eta_{n}^{*},\xi_{n}^{*}  )||_{*}\leq C \mu_{1,k_{n}}^{ A }  $.
\end{lemma}
\begin{proof}
First, we have
\begin{align*}
   -\Delta \eta^{*}_{n} = pv_{k_n}^{p-1}\xi^{*}_{n}+p(v_{k_n}^{p-1}\sum_{i=0}^{N}b_{i}\mu_{1,k_{n}} P\Phi_{1,k_{n},i}-\sum_{i=0}^{N}b_{i}\mu_{1,k_{n}} (PV_{1,k_{n}})^{p-1} P\Phi_{1,k_{n},i} ).
   \end{align*}
By directly computing, we get that
\begin{align*}
    &\sum_{i=0}^{N}|b_{i,n}||(v_{k_n}^{p-1}\mu_{1,k_{n}} P\Phi_{1,k_{n},i} -  (PV_{1,k_{n}})^{p-1}P\Phi_{1,k_{n},i}|
    \\\leq &C \sum_{i=0}^{N}|b_{i,n}|(PV_{1,k_{n}})^{p-2}\phi_{\epsilon}P\Phi_{1,k_{n},i}|
    \\\leq &C ||(\psi_{\epsilon},\phi_{\epsilon}||_{*} \frac{\mu_{1,k_{n}}^{-\frac{N}{q+1}-2}}{(1+\mu_{1,k_{n}}^{-1}|y-P_1  |)^{\frac{(N-2)p-2}{2} +2 +\theta }   }.
\end{align*}


We also have
\begin{equation*}
    \begin{split}
        -\Delta \xi_{n}^{*} = &qu_{k_{n}}^{q-1}\eta_{n}^{*} + \epsilon\beta_2\eta_{n}^{*}+\epsilon\beta_2\sum_{0=1}^{N}b_{i,n}\mu_{1,k_{n}} P\Psi_{1,k_{n},i}
        \\&+qu_{k_{n}}^{q-1}\sum_{0=1}^{N}b_{i,n}\mu_{1,k_{n}} P\Psi_{1,k_{n},i}-qU_{1,k_{n}}^{q-1}\sum_{0=1}^{N}b_{i,n}\mu_{1,k_{n}} \Psi_{1,k_{n},i}.
    \end{split}
\end{equation*}
By directly computing, we get that 
\begin{equation*}
    \begin{split}
        &|   qu_{k_{n}}^{q-1}\sum_{i=1}^{N}b_{i,n}\mu_{1,k_{n}}\widetilde{ P\Psi}_{1,k_{n},i}-qU_{1,k_{n}}^{q-1}\sum_{i=1}^{N}b_{i,n}\mu_{1,k_{n}} \Psi_{1,k_{n},i} |
        \\\leq&C| U_{1,k_{n}}^{q-1}\sum_{i=1}^{N}b_{i,n}\mu_{1,k_{n}} (\widetilde{ P\Psi}_{1,k_{n},i}   - \Psi_{1,k_{n},i}) |+ C |U_{1,k_{n}}^{q-2} \mu_{1,k_{n}}^{\frac{Np}{q+1}}\sum_{i=1}^{N}b_{i,n}\mu_{1,k_{n}}\widetilde{ P\Psi}_{1,k_{n},i}|
        \\&+ C|U_{1,k_{n}}^{q-2} \psi_{k_{n}}\sum_{i=1}^{N}b_{i,n}\mu_{1,k_{n}}\widetilde{ P\Psi}_{1,k_{n},i}   |
:=J_1+J_2+J_3
    \end{split}
\end{equation*}
Computing $J_1$, $J_2$, $J_3 $ one by one, we get that
\begin{equation*}
    \begin{split}
        | J_1| 
        \leq& C b_{0,n}\mu_{1,k_{n}}^{\frac{N(p+1)q}{q+1}-\frac{N+2}{2}-1-\theta} +  \frac{\mu_{1,k_{n}}^{-\frac{N}{p+1}-2}}{  (  1+ \mu_{1,k_{n}}^{-1}|x-P_{1,k_{n}}| )^{ \frac{N+2}{2}+1+\theta }   }
        \\&+C\sum_{i=1}^{N}b_{i,n}\mu_{1,k_{n}}^{\frac{N(p+1)q}{q+1}-\frac{N+2}{2}-\theta} +  \frac{\mu_{1,k_{n}}^{-\frac{N}{p+1}-2}}{  (  1+ \mu_{1,k_{n}}^{-1}|x-P_{1,k_{n}}| )^{ \frac{N+2}{2}+1+\theta }   },
    \end{split}
\end{equation*}
\begin{equation*}
    \begin{split}
    |J_2|\leq &C b_{0,n}\mu_{1,k_{n}}^{\frac{N(p+1)q}{q+1}-\frac{N+2}{2}-1-\theta}  \frac{\mu_{1,k_{n}}^{-\frac{N}{p+1}-2}}{  (  1+ \mu_{1,k_{n}}^{-1}|x-P_{1,k_{n}}| )^{ \frac{N+2}{2}+1+\theta }   }
        \\ &+C\sum_{i=1}^{N}b_{i,n}\mu_{1,k_{n}}^{\frac{N(p+1)q}{q+1}-\frac{N+2}{2}-\theta}  \frac{\mu_{1,k_{n}}^{-\frac{N}{p+1}-2}}{  (  1+ \mu_{1,k_{n}}^{-1}|x-P_{1,k_{n}}| )^{ \frac{N+2}{2}+1+\theta }   },
    \end{split}
\end{equation*}
and
\begin{equation*}
    \begin{split}
        | J_3|
        \leq &C\mu^{\frac{N(p+1)}{q+1}(2q-\frac{1}{2})-\theta -N-4}b_{0,n}\frac{\mu_{1,k_{n}}^{-\frac{N}{p+1}-2}}{  (  1+ \mu_{1,k_{n}}^{-1}|x-P_{1,k_{n}}| )^{ \frac{N+2}{2}+1+\theta }   }
        \\&+C\sum_{i=1}^{N}b_{i,n}\mu_{1,k_{n}}^{\frac{N(p+1)}{q+1}(2q-\frac{1}{2}+1)-\theta -N-4}\frac{\mu_{1,k_{n}}^{-\frac{N}{p+1}-2}}{  (  1+ \mu_{1,k_{n}}^{-1}|x-P_{1,k_{n}}| )^{ \frac{N+2}{2}+1+\theta }   }.
    \end{split}
\end{equation*}
Finally, we have
\begin{equation*}
    \begin{split}
       &|\epsilon\beta_2\sum_{i=1}^{N}b_{i,n}\mu_{1,k_{n}}\widetilde{ P\Psi}_{1,k_{n},i} |
       \\\leq &Cb_{0,n}\mu_{1,k_{n}}^{\frac{2N(p+1)}{q+1}   -\frac{N+4}{2}}\frac{\mu_{1,k_{n}}^{-\frac{N}{p+1}-2}}{  (  1+ \mu_{1,k_{n}}^{-1}|x-P_{1,k_{n}}| )^{ \frac{N+2}{2}+1+\theta }   }
       \\&+C\sum_{i=1}^{N}b_{i,n}\mu_{1,k_{n}}^{\frac{2N(p+1)}{q+1} +1  -\frac{N+4}{2}}\frac{\mu_{1,k_{n}}^{-\frac{N}{p+1}-2}}{  (  1+ \mu_{1,k_{n}}^{-1}|x-P_{1,k_{n}}| )^{ \frac{N+2}{2}+1+\theta }   }.
    \end{split}
\end{equation*}

Set
\begin{align*}
&(g_n,f_n) =
(qu_{k_{n}}^{q-1}\sum_{i=1}^{N}b_{i,n}\mu_{1,k_{n}}\widetilde{ P\Psi}_{1,k_{n},i}
-qU_{1,k_{n}}^{q-1}\sum_{i=1}^{N}b_{i,n}\mu_{1,k_{n}} \Psi_{1,k_{n},i}
\\&+\epsilon\beta_2\sum_{i=1}^{N}b_{i,n}\mu_{1,k_{n}}\widetilde{ P\Psi}_{1,k_{n},i},pv_{k_{n}}^{p-1}\sum_{i=0}^{N}b_{i,n}\mu_{1,k_{n}} P\Phi_{1,k_{n},i} - pV_{1,k_{n}}^{p-1}\sum_{i=0}^{N}b_{i,n}\mu_{1,k_{n}} \Phi_{1,k_{n},i}).
\end{align*}
Then
\begin{align*}
||(g_n,f_n)||_{**}\leq Cb_{0,n} \mu_{1,k_{n}}^{ A } +C\sum_{i=1}^{N}b_{i,n}\mu_{1,k_{n}}^{ A+1 }.
\end{align*}
Since
\begin{equation*}
    \begin{split}
        \int_{\Omega}p(PV)_{1,k_{n}}^{p-1}P\Phi_{1,k_{n},i}\xi_{n}^{*} + qU_{1,k_{n}}^{q-1}\Psi_{1,k_{n},i}\eta_{n}^{*} dy =0,
    \end{split}
\end{equation*}
and similar to the proof of Lemma \ref{le3.1}, we have
$$   ||(\eta_{n}^{*},\xi_{n}^{*}  )||_{*}\leq C||(g_n,f_n)||_{**}\leq Cb_{0,n} \mu_{1,k_{n}}^{ A } +C\sum_{i=1}^{N}b_{i,n}\mu_{1,k_{n}}^{ A+1 } .$$
\end{proof}

\begin{lemma}\label{le4.3}For any $i=1,\cdots,N$, we have
\begin{equation*}
    \begin{split}
        |\frac{\partial \psi_{k_{n}}}{\partial x_i}(x)| \leq C\frac{\mu_{1,k_{n}}^{A-1-\frac{N}{q+1}}}{(  1+ \mu_{1,k_{n}}^{-1}|x-P_{1,k_{n}}| )^{\frac{(N-2)p}{2}+\theta+\frac{(N-2)(p-1)}{2}}  },
    \end{split}
\end{equation*}
and
\begin{equation*}
    \begin{split}
        |\frac{\partial \phi_{k_{n}}}{\partial x_i}(x)| \leq C \frac{\mu_{1,k_{n}}^{  \frac{N(p-q+1)}{q+1} +1 }  }{(  1+ \mu_{1,k_{n}}^{-1}|x-P_{1,k_{n}}| )^{\frac{N(p+1)}{q+1}-1 } }+C\frac{\mu_{1,k_{n}}^{-\frac{N}{p+1}-1+A}}{ (  1+ \mu_{1,k_{n}}^{-1}|x-P_{1,k_{n}}| )^{  \frac{N+2}{2}+\theta}       } .
    \end{split}
\end{equation*}
\end{lemma}
\begin{proof} Since
\begin{equation*}
    \begin{cases}
    -\Delta\psi_{k_{n}} = (PV_{1,k_{n}} +\phi_{k_{n}})^{p} - PV^{p}_{1,k_{n}}, &\hbox{ in }\Omega,
    \\ \psi_{k_{n}} = 0, &\hbox{ on }\partial\Omega,
    \end{cases}
\end{equation*}
we have
\begin{equation*}
    \begin{split}
        \psi_{k_{n}}(x) = \int_{\Omega}G(x,y) \bigg((PV_{1,k_{n}} +\phi_{k_{n}})^{p} - PV^{p}_{1,k_{n}}\bigg)dy.
    \end{split}
\end{equation*}
By direct computation, we also have
\begin{equation*}
    \begin{split}
        \frac{\partial \psi_{k_{n}}}{\partial x_i}(x) = \int_{\Omega}\frac{\partial G}{\partial x_i}(x,y) \bigg((PV_{1,k_{n}} +\phi_{k_{n}})^{p} - PV^{p}_{1,k_{n}}\bigg)dy.
    \end{split}
\end{equation*}
So
\begin{equation*}
    \begin{split}
       | \frac{\partial \psi_{k_{n}}}{\partial x_i}(x)  |
        \leq& C \int_{\Omega}|\frac{1}{|x-y|^{N-1}}V_{1,k_{n}}^{p-1}\phi_{k_{n}}|dy +C \int_{\Omega}|\frac{1}{|x-y|^{N-1}}\phi^{p}_{n}|dy
        \\:=&I_1+I_2.
    \end{split}
\end{equation*}
Further computation shows that
\begin{equation*}
    \begin{split}
        |I_1|\leq& C\int_{\Omega}\frac{1}{|x-y|^{N-1}}\frac{\mu_{1,k_{n}}^{-\frac{Np}{p+1}+A}}{  (  1+ \mu_{1,k_{n}}^{-1}|x-P_{1,k_{n}}| )^{(N-2)(p-1) +\frac{N-2}{2}+1 +\theta}   }
        \\\leq &C\frac{\mu_{1,k_{n}}^{A-1-\frac{N}{q+1}}}{(  1+ \mu_{1,k_{n}}^{-1}|x-P_{1,k_{n}}| )^{\frac{(N-2)p}{2}+\theta+\frac{(N-2)(p-1)}{2}}  },
    \end{split}
\end{equation*}
and
\begin{equation*}
    \begin{split}
        |I_2|\leq &C\int_{\Omega}\frac{1}{|x-y|^{N-1}}\frac{\mu_{1,k_{n}}^{-\frac{Np}{p+1}+Ap}}{  (  1+ \mu_{1,k_{n}}^{-1}|x-P_{1,k_{n}}| )^{(\frac{N-2}{2}+1 +\theta)p}   }
        \\\leq &C\frac{\mu_{1,k_{n}}^{pA-1-\frac{N}{q+1}}}{(  1+ \mu_{1,k_{n}}^{-1}|x-P_{1,k_{n}}| )^{\frac{(N-2)p}{2}+\theta+(p-1)(1+\theta)}  }.
    \end{split}
\end{equation*}

On the other hand, since
\begin{equation*}
    \begin{cases}
    -\Delta \phi_{k_{n}} = ( PU_{d_{1,k_{n}},P_{1,k_{n}}}  +\psi_{k_{n}})^{q} - U_{1,k_{n}}^{q} + \epsilon\beta_2(PU_{d_{1,k_{n}},P_{1,k_{n}}}  +\psi_{k_{n}}   ), \hbox{ in }\Omega.
    \\ \phi_{k_{n}}=0,\hbox{ on }\partial\Omega,
    \end{cases}
\end{equation*}
we have
\begin{equation*}
    \phi_{k_{n}}(x) = \int_{\Omega}G(x,y) \bigg(( PU_{d_{1,k_{n}},P_{1,k_{n}}}  +\psi_{k_{n}})^{q} - U_{1}^{q} + \epsilon\beta_2(PU_{d_{1,k_{n}},P_{1,k_{n}}}  +\psi_{k_{n}}   ) \bigg)dy,
\end{equation*}
and
\begin{equation*}
    \begin{split}
        \frac{\partial\phi_{k_{n}} }{\partial x_i}(x)= \int_{\Omega}\frac{\partial G }{\partial x_i}(x,y) \bigg(( PU_{d_{1,k_{n}},P_{1,k_{n}}}  +\psi_{k_{n}})^{q} - U_{1}^{q} + \epsilon\beta_2(PU_{d_{1,k_{n}},P_{1,k_{n}}}  +\psi_{k_n}   )\bigg) dy.
    \end{split}
\end{equation*}
Thus, the estimate of $|\frac{\partial\phi_{k_{n}} }{\partial x_i}(x) |$ given by
\begin{equation*}
    \begin{split}
        |\frac{\partial\phi_{k_{n}} }{\partial x_i}(x) |
        \leq &C|\int_{\Omega}\frac{1}{|x-y|^{N-1}}U_{1,k_{n}}^{q-1}(\psi_{k_{n}}+\mu_{1,k_{n}}^{\frac{Np}{q+1}})dy|+ C|\int_{\Omega}\frac{1}{|x-y|^{N-1}}(\psi_{k_{n}}^{q} + \mu_{1,k_{n}}^{\frac{Npq}{q+1}})dy|
        \\&+C|\int_{\Omega}\frac{1}{|x-y|^{N-1}}\epsilon\beta_2(PU_{d_{1,k_{n}},P_{1,k_{n}}}  +\psi_{k_n}   )dy|
        \\:=& J_1+J_2+J_3.
    \end{split}
\end{equation*}
Moreover, we have
\begin{equation*}
    \begin{split}
        |J_1|
        \leq &C\int_{\Omega}\frac{1}{|x-y|^{N-1}}\frac{\mu_{1,k_{n}}^{-\frac{Nq}{q+1}+A}}{  (  1+ \mu_{1,k_{n}}^{-1}|x-P_{1,k_{n}}| )^{  \frac{N(p+1)}{q+1}(q-\frac{1}{2})+\theta}    }\\&+C\int_{\Omega}\frac{1}{|x-y|^{N-1}}\frac{\mu_{1,k_{n}}^{-\frac{N(q-1)}{q+1}+\frac{Np}{q+1}}}{  (  1+ \mu_{1,k_{n}}^{-1}|x-P_{1,k_{n}}| )^{  \frac{N(p+1)}{q+1}(q-1)}}
        \\\leq &C\frac{\mu_{1,k_{n}}^{-\frac{N}{p+1}-1+A}}{ (  1+ \mu_{1,k_{n}}^{-1}|x-P_{1,k_{n}}| )^{  \frac{N+2}{2}+\theta}       },
    \end{split}
\end{equation*}
where we have used the fact that $ \frac{N(p+1)}{q+1}(q-\frac{1}{2})+\theta-1 > \frac{N+2}{2}+\theta $. Similarly,
 \begin{equation*}
     \begin{split}
         |J_2|
       \leq &C\frac{\mu_{1,k_{n}}^{-\frac{N}{p+1}-1+Ap}}{ (  1+ \mu_{1,k_{n}}^{-1}|x-P_{1,k_{n}}| )^{  (\frac{N(p+1)}{2(q+1)}+\theta)q-1}       }+ C\frac{\mu_{1,k_{n}}^{-\frac{N}{p+1}-1+A}}{ (  1+ \mu_{1,k_{n}}^{-1}|x-P_{1,k_{n}}| )^{  \frac{N+2}{2}+\theta}       }
        \\\leq &C\frac{\mu_{1,k_{n}}^{-\frac{N}{p+1}-1+A}}{ (  1+ \mu_{1,k_{n}}^{-1}|x-P_{1,k_{n}}| )^{  \frac{N+2}{2}+\theta}       },
     \end{split}
 \end{equation*}
 and
 \begin{equation*}
     \begin{split}
         &|J_3|
         \\\leq&C \int_{\Omega}\frac{1}{|x-y|^{N-1}}\mu_{1,k_{n}}^{\frac{N(p-q+2)}{1+q}}(\frac{\mu_{1,k_{n}}^{-\frac{N}{q+1}}}{  (  1+ \mu_{1,k_{n}}^{-1}|x-P_{1,k_{n}}| )^{  \frac{N(p+1)}{q+1}}}+\frac{ \mu_{1,k_{n}}^{ -\frac{N}{q+1}+A     }}{   (  1+ \mu_{1,k_{n}}^{-1}|x-P_{1,k_{n}}| )^{  \frac{N(p+1)}{2(q+1)}+\theta }  }     )
         \\\leq&C \frac{\mu_{1,k_{n}}^{  \frac{N(p-q+1)}{q+1} +1 }  }{(  1+ \mu_{1,k_{n}}^{-1}|x-P_{1,k_{n}}| )^{\frac{N(p+1)}{q+1}-1 } }+C\frac{\mu_{1,k_{n}}^{-\frac{N}{p+1}-1+A}}{ (  1+ \mu_{1,k_{n}}^{-1}|x-P_{1,k_{n}}| )^{  \frac{N+2}{2}+\theta}       }.
     \end{split}
 \end{equation*}

\end{proof}

\begin{lemma}\label{le4.5}
For $\alpha$ small, $ x \in \partial B_{\mu_{1,k_{n}}^{\alpha}}(P_{1,k_{n}})$ we have that
\begin{equation*}
    \begin{split}
        |\frac{\partial \eta_{n}^{*} }{ \partial x_i}(x) |\leq C \mu_{1,k_{n}}^{\frac{Np}{q+1} +1 + \sigma_{p,q,N}-C_{N}\alpha},
    \end{split}
\end{equation*}
and
\begin{equation*}
    \begin{split}
         |    \frac{\partial \xi_{n}^{*} }{ \partial x_i}(x) |\leq C\mu_{1,k_{n}}^{\frac{N}{q+1}+1+\sigma_{p,q,N}-C_{N}\alpha},
    \end{split}
\end{equation*}
where $\sigma_{p,q,N}>0 $ and $C_{N}>0 $ are constants.
\end{lemma}
\begin{proof}
Since
\begin{equation*}
    \begin{split}
        -\Delta \xi_{n}^{*} = &qu_{k_{n}}^{q-1}\eta_{n}^{*} + \epsilon\beta_2\eta_{n}^{*}+\epsilon\beta_2\sum_{i=0}^{N}b_{i,n}\mu_{1,k_{n}} \widetilde{P\Psi}_{1,k_{n},i}
        \\&+qu_{k_{n}}^{q-1}\sum_{i=0}^{N}b_{i,n}\mu_{1,k_{n}}\widetilde{P\Psi}_{1,k_{n},i}-qU_{1,k_{n}}^{q-1}\sum_{i=0}^{N}b_{i,n}\mu_{1,k_{n}} \Psi_{1,k_{n},i},
    \end{split}
\end{equation*}
we have
\begin{equation*}
    \begin{split}
         \frac{\partial \xi_{n}^{*} }{ \partial x_i}(x) = &\int_{\Omega}\frac{\partial G}{ \partial x_i }(x,y)( qu_{k_{n}}^{q-1}\eta_{n}^{*} + \epsilon\beta_2\eta_{n}^{*}+\epsilon\beta_2\sum_{i=0}^{N}b_{i,n}\mu_{1,k_{n}} \widetilde{P\Psi}_{1,k_{n},i}
        \\&+qu_{k_{n}}^{q-1}\sum_{i=0}^{N}b_{i,n}\mu_{1,k_{n}} \widetilde{P\Psi}_{1,k_{n},i}-qU_{1,k_{n}}^{q-1}\sum_{i=0}^{N}b_{i,n}\mu_{1,k_{n}} \Psi_{1,k_{n},i}).
    \end{split}
\end{equation*}

By directly computing, we get that
\begin{equation*}
    \begin{split}
        &|\int_{\Omega}\frac{\partial G}{ \partial x_i }(x,y)qu_{k_{n}}^{q-1}\eta_{n}^{*}|\\ \leq &C|b_{0,n}|\frac{\mu_{1,k_{n}}^{A+1-\frac{Nq}{q+1}}}{(  1+\mu_{1,k_{n}}^{-1}|x-P_{1,k_{n}}|)^{ \frac{N(p+1)}{q+1}(q-\frac{1}{2})+\theta-1}    }
        \\\leq &C\mu_{1,k_{n}}^{\frac{N}{q+1}+1+\sigma_{p,q,N}-C_{N}\alpha},
    \end{split}
\end{equation*}
\begin{equation*}
    \begin{split}
        &|\int_{\Omega}\frac{\partial G}{\partial x_i} \epsilon\beta_2\eta_{n}^{*}|
        \\\leq &C\frac{\mu_{1,k_{n}}^{\frac{N(p-q+2)}{q+1}+A+1-\frac{N}{q+1}}}{(  1+\mu_{1,k_{n}}^{-1}|x-P_{1,k_{n}}|)^{ \frac{N(p+1)}{2(q+1)}+\theta-1} }\leq C\mu_{1,k_{n}}^{\frac{N}{q+1}+1+\sigma_{p,q,N}-C_{N}\alpha},
    \end{split}
\end{equation*}
and
\begin{equation*}
    \begin{split}
        &|\int_{\Omega}\frac{\partial G}{\partial x_i}\epsilon\beta_2\sum_{i=0}^{N}b_{i,n}\mu_{1,k_{n}}\widetilde{P\Psi}_{1,k_{n},i} | \\ \leq &C\frac{\mu_{1,k_{n}}^{\frac{N(p-q+2)}{q+1}+1-\frac{N}{q+1}}}{(  1+\mu_{1,k_{n}}^{-1}|x-P_{1,k_{n}}|)^{ \frac{N(p+1)}{(q+1)}-1} }
        \leq C\mu_{1,k_{n}}^{\frac{N}{q+1}+1+\sigma_{p,q,N}-C_{N}\alpha}.
    \end{split}
\end{equation*}

We also have
\begin{equation*}
    \begin{split}
        &|\int_{\Omega}\frac{\partial G}{\partial x_i}(qu_{k_{n}}^{q-1}\sum_{i=0}^{N}b_{i,n}\mu_{1,k_{n}} \widetilde{P\Psi}_{1,k_{n},i}-qU_{1,k_{n}}^{q-1}\sum_{0=1}^{N}b_{i,n}\mu_{1,k_{n}} \Psi_{1,k_{n},i})dy|
        \\\leq &C\int_{\Omega}\frac{1}{|x-y|^{N-1}}U_{1,k_{n}}^{q-1}(|b_{0,n}|\mu_{1,k_{n}}^{  \frac{Np}{q+1}  }+\sum_{i=1}^{N}|b_{i,n}|\mu_{1,k_{n}}^{1+\frac{Np}{q+1}})dy
        \\&+C\int_{\Omega}\frac{1}{|x-y|^{N-1}}U_{1,k_{n}}^{q-2}(|\psi_{k_{n}}|+\mu_{1,k_{n}}^{ \frac{Np}{q+1}}) \sum_{i=0}^{N}|b_{i,n}|\mu_{1,k_{n}}|\Psi_{1,k_{n},i}|dy
        \\=:&J_1+J_2.
    \end{split}
\end{equation*}
 Noting  $  -\frac{N(q-1)}{q+1}+\frac{Np}{q+1}+1 +\frac{N(p+1)(q-1)}{q+1}-1 -\frac{N}{q+1}=\frac{N(pq-1)}{q+1}  $, we obtain
\begin{equation*}
    \begin{split}
        |J_1|
        \leq& C\frac{\mu_{1,k_{n}}^{-\frac{N(q-1)}{q+1}+\frac{Np}{q+1}+1}}{  (  1+\mu_{1,k_{n}}^{-1}|x-P_{1,k_{n}}|)^{    \frac{N(p+1)(q-1)}{q+1}-1} }\leq C\mu_{1,k_{n}}^{\frac{N}{q+1}+1+\sigma_{p,q,N}-C_{N}\alpha},
    \end{split}
\end{equation*}

Since $  -\frac{N(q-1)}{q+1}+\frac{Np}{q+1}+1 +\frac{N(p+1)(q-1)}{q+1}-1 -\frac{N}{q+1}=\frac{N(pq-1)}{q+1}  $ and $-\frac{Nq}{q+1}+A+1 +\frac{N(p+1)}{q+1}(q-\frac{1}{2})-1+\theta > \frac{N(pq-1)}{q+1}$, we have
\begin{equation*}
    \begin{split}
        &|J_2|\\\leq &C|b_{0,n}|\frac{\mu_{1,k_{n}}^{-\frac{N(q-1)}{q+1}+\frac{Np}{q+1}+1}}{  (  1+\mu_{1,k_{n}}^{-1}|x-P_{1,k_{n}}|)^{    \frac{N(p+1)(q-1)}{q+1}-1} }+C\sum_{i=1}^{N}|b_{i,n}|\frac{\mu_{1,k_{n}}^{-\frac{N(q-1)}{q+1}+\frac{Np}{q+1}+1}}{  (  1+\mu_{1,k_{n}}^{-1}|x-P_{1,k_{n}}|)^{    \frac{N(p+1)(q-1)}{q+1}}  }
        \\&+C|b_{0,n}|\frac{\mu_{1,k_{n}}^{-\frac{Nq}{q+1}+A+1}}{  (  1+\mu_{1,k_{n}}^{-1}|x-P_{1,k_{n}}|)^{    \frac{N(p+1)}{q+1}(q-\frac{1}{2})-1+\theta}}+C\sum_{i=1}^{N}|b_{i,n}|\frac{\mu_{1,k_{n}}^{-\frac{Nq}{q+1}+A+1}}{  (  1+\mu_{1,k_{n}}^{-1}|x-P_{1,k_{n}}|)^{    \frac{N(p+1)}{q+1}(q-\frac{1}{2})+\theta}}
        \\\leq &C\mu_{1,k_{n}}^{\frac{N}{q+1}+1+\sigma_{p,q,N}-C_{N}\alpha}.
    \end{split}
\end{equation*}
Similarly, we can obtain the estimate for  $\frac{ \partial  \eta^{*}_{n}}{ \partial x_{i}} $.
\end{proof}


\begin{lemma}
It holds that $ b_{i,n} \to 0$, as $n \to +\infty$, for $i=0,\cdots,N$.
\end{lemma}
\begin{proof}

\textbf{Step 1.} We apply the identities in Lemma \ref{le2.1} in the ball $B_{\mu_{1,k_{n}}^{\alpha}}(P_{1,k_{n}}) $, where $\alpha>0$ is a small fix constant.
\begin{equation}\label{eq4.1}
    \begin{split}
       &\int_{\partial B_{\mu_{1,k_{n}}^{\alpha}}(P_{1,k_{n}})  }(-\frac{\partial u_{k_{n}}}{\partial\nu}\langle \nabla\xi_{n},x-P_{1,k_{n}} \rangle -\frac{\partial \xi_{n}}{\partial\nu}\langle \nabla u_{k_{n}},x-P_{1,k_{n}} \rangle
       \\&+\langle  \nabla u_{k_{n}},\nabla\xi_{n}\rangle\langle \nu,x-P_{1,k_{n}}\rangle)ds-\frac{N}{p+1}\int_{\partial B_{\mu_{1,k_{n}}^{\alpha}}(P_{1,k_{n}})}(\frac{\partial u_{k_{n}}}{\partial\nu}\xi_{n}+\frac{\partial\eta_{n}}{\partial\nu}v_{k_{n}})ds
       \\&+\int_{\partial B_{\mu_{1,k_{n}}^{\alpha}}(P_{1,k_{n}})}(-\frac{\partial v_{k_{n}}}{\partial\nu}\langle \nabla\eta_{n},x-P_{1,k_{n}} \rangle -\frac{\partial \eta_{n}}{\partial\nu}\langle \nabla v_{k_{n}},x-P_{1,k_{n}} \rangle
       \\&+\langle  \nabla v_{k_{n}},\nabla\eta_{n}\rangle\langle \nu,x-P_{1,k_{n}}\rangle)ds
       -\frac{N}{q+1}\int_{\partial B_{\mu_{1,k_{n}}^{\alpha}}(P_{1,k_{n}})}(\frac{\partial v_{k_{n}}}{\partial\nu}\eta_{n}+\frac{\partial\xi_{n}}{\partial\nu}u_{k_{n}})ds
       \\=&(-N+\frac{2N}{q+1})\epsilon\beta_2\int_{B_{\mu_{1,k_{n}}^{\alpha}}(P_{1,k_{n}})}u_{k_{n}}\eta_{n} dx+\epsilon\beta_2\int_{\partial B_{\mu_{1,k_{n}}^{\alpha}}(P_{1,k_{n}})}u_{k_{n}}\eta_{n}\langle\nu,x-P_{1,k_{n}} \rangle ds
       \\&+ \int_{\partial B_{\mu_{1,k_{n}}^{\alpha}}(P_{1,k_{n}})}(u_{k_{n}}^{q}\xi_{n}+v_{k_{n}}^{p}\eta_{n})\langle \nu,x-P_{1,k_{n}}\rangle ds.
    \end{split}
\end{equation}
We compute,
\begin{equation*}
    \begin{split}
        &\int_{\partial B_{\mu_{1,k_{n}}^{\alpha}}(P_{1,k_{n}})  }(-\frac{\partial u_{k_{n}}}{\partial\nu}\langle \nabla\xi_{n},x-P_{1,k_{n}} \rangle -\frac{\partial \xi_{n}}{\partial\nu}\langle \nabla u_{k_{n}},x-P_{1,k_{n}} \rangle
        \\&+\langle  \nabla u_{k_{n}},\nabla\xi_{n}\rangle\langle \nu,x-P_{1,k_{n}}\rangle)ds-\frac{N}{p+1}\int_{\partial B_{\mu_{1,k_{n}}^{\alpha}}(P_{1,k_{n}})}(\frac{\partial u_{k_{n}}}{\partial\nu}\xi_{n}+\frac{\partial\eta_{n}}{\partial\nu}v_{k_{n}})ds
        \\&+\int_{\partial B_{\mu_{1,k_{n}}^{\alpha}}(P_{1,k_{n}})}(-\frac{\partial v_{k_{n}}}{\partial\nu}\langle \nabla\eta_{n},x-P_{1,k_{n}} \rangle -\frac{\partial \eta_{n}}{\partial\nu}\langle \nabla v_{k_{n}},x-P_{1,k_{n}} \rangle
        \\&+\langle  \nabla v_{k_{n}},\nabla\eta_{n}\rangle\langle \nu,x-P_{1,k_{n}}\rangle)ds
       -\frac{N}{q+1}\int_{\partial B_{\mu_{1,k_{n}}^{\alpha}}(P_{1,k_{n}})}(\frac{\partial v_{k_{n}}}{\partial\nu}\eta_{n}+\frac{\partial\xi_{n}}{\partial\nu}u_{k_{n}})ds
       \\=&\int_{  B_{\mu_{1,k_{n}}^{\alpha}}(P_{1,k_{n}}) }-\Delta u_{k_{n}}\langle    \nabla\xi_{n},x-P_{1,k_{n}}  \rangle -\Delta \xi_{n}\langle    \nabla u_{k_{n}},x-P_{1,k_{n}}  \rangle dy
       \\&+\int_{  B_{\mu_{1,k_{n}}^{\alpha}}(P_{1,k_{n}}) }-\Delta v_{k_{n}}\langle    \nabla\eta_{n},x-P_{1,k_{n}}  \rangle -\Delta \eta_{n}\langle    \nabla v_{k_{n}},x-P_{1,k_{n}}  \rangle dy
       \\&+\frac{N}{p+1}\int_{  B_{\mu_{1,k_{n}}^{\alpha}}(P_{1,k_{n}}) }-\Delta u_{k_{n}}\xi_{n}-\Delta \eta_{n}v_{k_{n}}dy
       \\&+\frac{N}{q+1}\int_{  B_{\mu_{1,k_{n}}^{\alpha}}(P_{1,k_{n}}) }-\Delta v_{k_{n}}\eta_{n}-\Delta \xi_{n}u_{k_{n}}dy.
    \end{split}
\end{equation*}
Set
\begin{equation*}
    \begin{split}
        &I_{1,n}( U,V,\eta,\xi  )
        = \int_{  B_{\mu_{1,k_{n}}^{\alpha}}(P_{1,k_{n}}) }-\Delta U\langle    \nabla\xi,x-P_{1,k_{n}}  \rangle -\Delta \xi\langle    \nabla U,x-P_{1,k_{n}}  \rangle dy
       \\&+\int_{  B_{\mu_{1,k_{n}}^{\alpha}}(P_{1,k_{n}}) }-\Delta V\langle    \nabla\eta,x-P_{1,k_{n}}  \rangle -\Delta \eta\langle    \nabla V_{1,k_{n}},x-P_{1,k_{n}}  \rangle dy
       \\&+\frac{N}{p+1}\int_{  B_{\mu_{1,k_{n}}^{\alpha}}(P_{1,k_{n}}) }-\Delta U\xi-\Delta \eta Vdy
    +\frac{N}{q+1}\int_{  B_{\mu_{1,k_{n}}^{\alpha}}(P_{1,k_{n}}) }-\Delta V\eta-\Delta \xi Udy,
    \end{split}
\end{equation*}
then
\begin{equation*}
    \begin{split}
        &I_{1,n}( U_{1,k_{n}},V_{1,k_{n}},\eta_{n},\xi_{n}  )
        \\=& I_{1,n}( PU_{d_{1,k_{n}},P_{1,k_{n}}},PV_{1,k_{n}},\sum_{i=0}^{N}b_{i,n}\mu_{1,k_{n}} \widetilde{P\Psi}_{1,k_{n},i}+\eta_{1,n}^{*}, \sum_{i=0}^{N}b_{i,n}\mu_{1,k_{n}} P\Phi_{1,k_{n},i} )
        + H_n
        \\=&\int_{  B_{\mu_{1,k_{n}}^{\alpha}}(P_{1,k_{n}}) }-\Delta PU_{d_{1,k_{n}},P_{1,k_{n}}} \langle    \nabla\sum_{i=0}^{N}b_{i,n}\mu_{1,k_{n}} P\Phi_{1,k_{n},i} ,x-P_{1,k_{n}}  \rangle
        \\&+\int_{  B_{\mu_{1,k_{n}}^{\alpha}}(P_{1,k_{n}}) }-\Delta V_{1,k_{n}}\langle    \nabla\sum_{i=0}^{N}b_{i,n}\mu_{1,k_{n}} \widetilde{P\Psi}_{1,k_{n},i},x-P_{1,k_{n}}  \rangle
        \\&+\int_{  B_{\mu_{1,k_{n}}^{\alpha}}(P_{1,k_{n}}) }-\Delta  \sum_{i=0}^{N}b_{i,n}\mu_{1,k_{n}} \widetilde{P\Psi}_{1,k_{n},i} \langle \nabla PV_{1,k_{n}} ,x-P_{1,k_{n}}  \rangle
        \\&+\int_{  B_{\mu_{1,k_{n}}^{\alpha}}(P_{1,k_{n}}) }-\Delta  \sum_{i=0}^{N}b_{i,n}\mu_{1,k_{n}} P\Phi_{1,k_{n},i} \langle \nabla PU_{d_{1,k_{n}},P_{1,k_{n}}} ,x-P_{1,k_{n}}  \rangle
        \\&+\frac{N}{q+1}\int_{  B_{\mu_{1,k_{n}}^{\alpha}}(P_{1,k_{n}}) }-\Delta  \sum_{i=0}^{N}b_{i,n}\mu_{1,k_{n}} P\Phi_{1,k_{n},i} PU_{d_{1,k_{n}},P_{1,k_{n}}}
        -\Delta PV_{1,k_{n}} \sum_{i=0}^{N}b_{i,n}\mu_{1,k_{n}} \widetilde{P\Psi}_{1,k_{n},i}
        \\&+ \frac{N}{p+1}\int_{  B_{\mu_{1,k_{n}}^{\alpha}}(P_{1,k_{n}}) }-\Delta PU_{d_{1,k_{n}},P_{1,k_{n}}} \sum_{i=0}^{N}b_{i,n}\mu_{1,k_{n}} P\Phi_{1,k_{n},i}
        -\Delta \sum_{i=0}^{N}b_{i,n}\mu_{1,k_{n}} \widetilde{P\Psi}_{1,k_{n},i} PV_{1,k_{n}}
       +H_n
     \\=&I_{1,n}^{1}+...+I_{1,n}^{8}+H_n,
    \end{split}
\end{equation*}
where $H_{n}$ is the remainder term. In the following, let us compute the above terms one by one.

First, we compute $I_{1,n}^{1}$ and $I_{1,n}^{2}$. Since $\nabla \tau (P_{0}) = 0$, using Lemma \ref{Ble8}, we have
\begin{equation*}
    \begin{split}
        I_{1,n}^{1}=&\int_{  B_{\mu_{1,k_{n}}^{\alpha}}(P_{1,k_{n}}) }-\Delta PU_{d_{1,k_{n}},P_{1,k_{n}}} \langle    \nabla\sum_{i=0}^{N}b_{i,n}\mu_{1,k_{n}} P\Phi_{1,k_{n},i} ,x-P_{1,k_{n}}  \rangle
        \\=& \int_{  B_{\mu_{1,k_{n}}^{\alpha}}(P_{1,k_{n}}) }PV_{1,k_{n}}^{p}\langle    \nabla\sum_{i=0}^{N}b_{i,n}\mu_{1,k_{n}} P\Phi_{1,k_{n},i} ,x-P_{1,k_{n}}  \rangle
           \\=&\int_{  B_{\mu_{1,k_{n}}^{\alpha}}(P_{1,k_{n}}) }V_{1,k_{n}}^{p}\langle    \nabla b_{0,n}\mu_{1,k_{n}} \Phi_{1,k_{n},0} ,x-P_{1,k_{n}}  \rangle
           \\&+ O ( |b_{0,n}|\mu_{1,k_{n}}^{\frac{N(p+1)}{q+1} +\alpha(N+1-(N-2)p)} + \sum_{i=1}^{N}|b_{i,n}|\mu_{1,k_{n}}^{ \frac{N(p+1)}{q+1}+1+\alpha (N-(N-2)p)  }),
    \end{split}
\end{equation*}
and
\begin{equation*}
    \begin{split}
         I_{1,n}^{2}=&\int_{  B_{\mu_{1,k_{n}}^{\alpha}}(P_{1,k_{n}}) }-\Delta PV_{1,k_{n}}\langle    \nabla\sum_{i=0}^{N}b_{i,n}\mu_{1,k_{n}} \widetilde{P\Psi}_{1,k_{n},i},x-P_{1,k_{n}}  \rangle
        \\=&\int_{  B_{\mu_{1,k_{n}}^{\alpha}}(P_{1,k_{n}}) }U_{1,k_{n}}^{q}\langle    \nabla\sum_{i=0}^{N}b_{i,n}\mu_{1,k_{n}}\widetilde{P\Psi}_{1,k_{n},i},x-P_{1,k_{n}}  \rangle
        \\=&\int_{  B_{\mu_{1,k_{n}}^{\alpha}}(P_{1,k_{n}}) }U_{1,k_{n}}^{q}\langle    \nabla b_{0,n}\mu_{1,k_{n}} \Psi_{1,k_{n},i},x-P_{1,k_{n}}  \rangle
        \\&+O(  |b_{0,n}| \mu_{1,k_{n}}^{\frac{N(p+1)}{q+1} +1}+ \sum_{i=1}^{N}|b_{i,n}| \mu_{1,k_{n}}^{\frac{N(p+1)}{q+1} +2}).
    \end{split}
\end{equation*}
Second, we compute $ I_{1,n}^{3}$ and $ I_{1,n}^{4}$. We have
\begin{equation*}
    \begin{split}
         I_{1,n}^{3}=&\int_{  B_{\mu_{1,k_{n}}^{\alpha}}(P_{1,k_{n}}) }-\Delta  \sum_{i=0}^{N}b_{i,n}\mu_{1,k_{n}} \widetilde{P\Psi}_{1,k_{n},i} \langle \nabla PV_{1,k_{n}} ,x-P_{1,k_{n}}  \rangle
        \\=& \int_{  B_{\mu_{1,k_{n}}^{\alpha}}(P_{1,k_{n}}) }\sum_{i=0}^{N}b_{i,n}(PV)_{1,k_{n}}^{p-1}\mu_{1,k_{n}} P\Phi_{1,k_{n},i} \langle \nabla V_{1,k_{n}} ,x-P_{1,k_{n}}  \rangle
        \\=&\int_{  B_{\mu_{1,k_{n}}^{\alpha}}(P_{1,k_{n}}) }pb_{0,n}V_{1,k_{n}}^{p-1}\mu_{1,k_{n}} \Phi_{1,k_{n},0} \langle \nabla V_{1,k_{n}} ,x-P_{1,k_{n}}  \rangle
        \\&+O(  |b_{0,n}|\mu_{1,k_{n}}^{\frac{N(p+1)}{q+1} +\alpha(N-(N-2)p+1)} + \sum_{i=1}^{N}|b_{i,n}|\mu_{1,k_{n}}^{ \frac{N(p+1)}{q+1}+1+\alpha(N-(N-2)p)   }   ).
    \end{split}
\end{equation*}
Using Lemma \ref{ble1}, we have
\begin{align*}
    I_{1,n}^{4}=&\int_{  B_{\mu_{1,k_{n}}^{\alpha}}(P_{1,k_{n}}) }-\Delta  \sum_{i=0}^{N}b_{i,n}\mu_{1,k_{n}} P\Phi_{1,k_{n},i} \langle \nabla PU_{d_{1,k_{n}},P_{1,k_{n}}} ,x-P_{1,k_{n}}  \rangle
        \\=&\int_{  B_{\mu_{1,k_{n}}^{\alpha}}(P_{1,k_{n}}) }\sum_{i=0}^{N}b_{i,n}qU_{1,k_{n}}^{q-1}\mu_{1,k_{n}} \Psi_{1,k_{n},i} \langle \nabla PU_{d_{1,k_{n}},P_{1,k_{n}}} ,x-P_{1,k_{n}}  \rangle
        \\=&\int_{  B_{\mu_{1,k_{n}}^{\alpha}}(P_{1,k_{n}}) }b_{0,n}qU_{1,k_{n}}^{q-1}\mu_{1,k_{n}} \Psi_{1,k_{n},0} \langle \nabla U_{1,k_{n}} ,x-P_{1,k_{n}}  \rangle+
        \\+&O(  |b_{0,n}| \mu_{1,k_{n}}^{\frac{N(p+1)}{q+1} +1})+o( \sum_{i=1}^{N}|b_{i,n}| \mu_{1,k_{n}}^{\frac{N(p+1)}{q+1} +1}),
\end{align*}
and
\begin{align*}
     I_{1,n}^{5}=&\frac{N}{q+1}\int_{  B_{\mu_{1,k_{n}}^{\alpha}}(P_{1,k_{n}}) }-\Delta  \sum_{i=0}^{N}b_{i,n}\mu_{1,k_{n}} P\Phi_{1,k_{n},i} PU_{d_{1,k_{n}},P_{1,k_{n}}}
        \\=&\frac{N}{q+1}\int_{  B_{\mu_{1,k_{n}}^{\alpha}}(P_{1,k_{n}}) }\sum_{i=0}^{N}b_{i,n}qU_{1,k_{n}}^{q-1}\mu_{1,k_{n}} \Psi_{1,k_{n},i}PU_{d_{1,k_{n}},P_{1,k_{n}}}
        \\=&\frac{N}{q+1}\int_{  B_{\mu_{1,k_{n}}^{\alpha}}(P_{1,k_{n}}) }b_{0,n}qU_{1,k_{n}}^{q-1}\mu_{1,k_{n}} \Psi_{1,k_{n},0}U_{1,k_{n}}
       \\&-\frac{N}{q+1}b_{0,n}(\frac{b_{N,p}}{\gamma_{N}})^{p}\mu_{1,k_{n}}^{\frac{Np}{q+1}}\tilde{H}( P_{1,k_{n}},P_{1,k_{n}}) \int_{  B_{\mu_{1,k_{n}}^{\alpha}}(P_{1,k_{n}}) }qU_{1,k_{n}}^{q-1}\mu_{1,k_{n}} \Psi_{1,k_{n},0} \\&+ o(|b_{0,n}| \mu_{1,k_{n}}^{\frac{N(p+1)}{q+1}})+ o(  \sum_{i=1}^{N}|b_{i,n}|\mu_{1,k_{n}}^{\frac{N(p+1)}{q+1}+1}  ).
\end{align*}
Moreover, we have
\begin{align*}
    I_{1,n}^{6}=&\frac{N}{q+1}\int_{  B_{\mu_{1,k_{n}}^{\alpha}}(P_{1,k_{n}}) }-\Delta PV_{1,k_{n}} \sum_{i=0}^{N}b_{i,n}\mu_{1,k_{n}} \widetilde{P\Psi}_{1,k_{n},i}
        \\=&\frac{N}{q+1}\int_{  B_{\mu_{1,k_{n}}^{\alpha}}(P_{1,k_{n}}) }U_{1,k_{n}}^{q}\sum_{i=0}^{N}b_{i,n}\mu_{1,k_{n}} \widetilde{P\Psi}_{1,k_{n},i}
        \\=&\frac{N}{q+1}\int_{  B_{\mu_{1,k_{n}}^{\alpha}}(P_{1,k_{n}}) }U_{1,k_{n}}^{q}b_{0,n}\mu_{1,k_{n}} \Psi_{1,k_{n},0}
        \\&-\frac{Np}{q+1}(\frac{ b_{N,p}b_{0,n}}{\gamma_{N}})^{p} \mu_{1,k_{n}}^{ \frac{Np}{q+1}} \tilde{H}(  P_{1,k_{n}},P_{1,k_{n}} )\int_{  B_{\mu_{1,k_{n}}^{\alpha}}(P_{1,k_{n}}) } U_{1,k_{n}}^{q}
        \\&+o( |b_{0,n}|\mu_{1,k_{n}}^{\frac{N(p+1)}{q+1}}   )+o(  \sum_{i=1}^{N}|b_{i,n}|\mu_{1,k_{n}}^{\frac{N(p+1)}{q+1}+1}  ).
\end{align*}
Forth, we calculate $I_{1,n}^{7}$ and $I_{1,n}^{8}$.
\begin{equation*}
    \begin{split}
        I_{1,n}^{7}=&\frac{N}{p+1}\int_{  B_{\mu_{1,k_{n}}^{\alpha}}(P_{1,k_{n}}) }-\Delta PU_{d_{1,k_{n}},P_{1,k_{n}}} \sum_{i=0}^{N}b_{i,n}\mu_{1,k_{n}} P\Phi_{1,k_{n},i}
        \\=&\frac{N}{p+1}\int_{  B_{\mu_{1,k_{n}}^{\alpha}}(P_{1,k_{n}}) }PV_{1,k_{n}}^{p} \sum_{i=0}^{N}b_{i,n}\mu_{1,k_{n}} P\Phi_{1,k_{n},i}
        \\=&\frac{N}{p+1}\int_{  B_{\mu_{1,k_{n}}^{\alpha}}} V_{1,k_{n}}^{p}b_{0,n}\mu_{1,k_{n}} \Phi_{1,k_{n},0}
        \\&+O(   |b_{0,n}|\mu_{1,k_{n}}^{ \frac{N(p+1)}{q+1} +\alpha(N-(N-2)p)}  +\sum_{i=1}^{N}|b_{i,n}|\mu_{1,k_{n}}^{ \frac{N(p+1)}{q+1}+1 +\alpha(N-(N-2)p-1)} ),
    \end{split}
\end{equation*}
and
\begin{equation*}
    \begin{split}
        I_{1,n}^{8}=&\frac{N}{p+1}\int_{  B_{\mu_{1,k_{n}}^{\alpha}}(P_{1,k_{n}}) }-\Delta \sum_{i=0}^{N}b_{i,n}\mu_{1,k_{n}} \widetilde{P\Psi}_{1,k_{n},i} PV_{1,k_{n}}
        \\=&\frac{N}{p+1}\int_{  B_{\mu_{1,k_{n}}^{\alpha}}(P_{1,k_{n}}) }\sum_{i=0}^{N}p(PV)_{1,k_{n}}^{p-1}b_{i,n}\mu_{1,k_{n}} P\Psi_{1,k_{n},i}PV_{1,k_{n}}
        \\=&\frac{N}{p+1}\int_{  B_{\mu_{1,k_{n}}^{\alpha}}(P_{1,k_{n}}) }b_{0,n} pV_{1,k_{n}}^{p}\mu_{1,k_{n}} \Psi_{1,k_{n},i}
        \\&+O( |b_{0,n}|\mu_{1,k_{n}}^{ \frac{N(p+1)}{q+1} +\alpha(N-(N-2)p)}+\sum_{i=1}^{N}|b_{i,n}|\mu_{1,k_{n}}^{ \frac{N(p+1)}{q+1}+1 +\alpha(N-(N-2)p-1)}  ) .
    \end{split}
\end{equation*}

Thus, we have
\begin{equation}\label{eq4.2}
    \begin{split}
       &I_{1,n}( PU_{d_{1,k_{n}},P_{1,k_{n}}},PV_{1,k_{n}},\sum_{i=0}^{N}b_{i,n}\mu_{1,k_{n}} \widetilde{P\Psi}_{1,k_{n},i}, \sum_{i=0}^{N}b_{i,n}\mu_{1,k_{n}} P\Phi_{1,k_{n},i} )
       \\=&\int_{  B_{\mu_{1,k_{n}}^{\alpha}}(P_{1,k_{n}}) }V_{1,k_{n}}^{p}\langle    \nabla b_{0,n}\mu_{1,k_{n}} \Phi_{1,k_{n},0} ,x-P_{1,k_{n}}  \rangle+\int_{  B_{\mu_{1,k_{n}}^{\alpha}}(P_{1,k_{n}}) }U_{1,k_{n}}^{q}\langle    \nabla b_{0,n}\mu_{1,k_{n}} \Psi_{1,k_{n},i},x-P_{1,k_{n}}  \rangle
       \\&+\int_{  B_{\mu_{1,k_{n}}^{\alpha}}(P_{1,k_{n}}) }pb_{0,n}V_{1,k_{n}}^{p-1}\mu_{1,k_{n}} \Phi_{1,k_{n},0} \langle \nabla V_{1,k_{n}} ,x-P_{1,k_{n}}  \rangle
       \\&+\int_{  B_{\mu_{1,k_{n}}^{\alpha}}(P_{1,k_{n}}) }b_{0,n}qU_{1,k_{n}}^{q-1}\mu_{1,k_{n}} \Psi_{1,k_{n},0} \langle \nabla U_{1,k_{n}} ,x-P_{1,k_{n}}  \rangle
       \\&+\frac{N}{q+1}\int_{  B_{\mu_{1,k_{n}}^{\alpha}}(P_{1,k_{n}}) }b_{0,n}qU_{1,k_{n}}^{q-1}\mu_{1,k_{n}} \Psi_{1,k_{n},0}U_{1,k_{n}}
       +\frac{N}{q+1}\int_{  B_{\mu_{1,k_{n}}^{\alpha}}(P_{1,k_{n}}) }U_{1,k_{n}}^{q}b_{0,n}\mu_{1,k_{n}} \Psi_{1,k_{n},0}
       \\&-\frac{N}{q+1}b_{0,n}(\frac{b_{N,p}}{\gamma_{N}})^{p}\mu_{1,k_{n}}^{\frac{Np}{q+1}}\tilde{H}( P_{1,k_{n}},P_{1,k_{n}}) \int_{  B_{\mu_{1,k_{n}}^{\alpha}}(P_{1,k_{n}}) }qU_{1,k_{n}}^{q-1}\mu_{1,k_{n}} \Psi_{1,k_{n},0}
        \\&+\frac{N}{q+1}a_{N,p}b_{0,n}\mu_{1,k_{n}}^{ \frac{Np}{q+1}} \tilde{H}(  P_{1,k_{n}},P_{1,k_{n}} )\int_{  B_{\mu_{1,k_{n}}^{\alpha}}(P_{1,k_{n}}) } U_{1,k_{n}}^{q}
        \\&+\frac{N}{p+1}\int_{  B_{\mu_{1,k_{n}}^{\alpha}}} V_{1,k_{n}}^{p}b_{0,n}\mu_{1,k_{n}} \Phi_{1,k_{n},0}+\frac{N}{p+1}\int_{  B_{\mu_{1,k_{n}}^{\alpha}}(P_{1,k_{n}}) }b_{0,n} pV_{1,k_{n}}^{p}\mu_{1,k_{n}} \Psi_{1,k_{n},i}
        \\&+o(|b_{0,n}|\mu_{1,k_{n}}^{\frac{N(p+1)}{q+1}})
        +o(\sum_{i=1}^{N}|b_{i,n}|\mu_{1,k_{n}}^{\frac{N(p+1)}{q+1}+1}  ).
    \end{split}
\end{equation}

Using Lemma  \ref{le4.3} and \ref{le4.5}, we get that
\begin{equation}\label{eq4.3}
    \begin{split}
        H_{n} = o(|b_{0,n}|\mu_{1,k_{n}}^{\frac{N(p+1)}{q+1}})
        +o(\sum_{i=1}^{N}|b_{i,n}|\mu_{1,k_{n}}^{\frac{N(p+1)}{q+1}+1}  ).
    \end{split}
\end{equation}

 Next, we compute the right hand side of \eqref{eq4.1}. Through direct computations, we have
\begin{equation*}
    \begin{split}
       &(-N+\frac{2N}{q+1})\epsilon\beta_2\int_{B_{\mu_{1,k_{n}}^{\alpha}}(P_{1,k_{n}})}u_{k_{n}}\eta_{n} dx
       =(-N+\frac{2N}{q+1})\epsilon\beta_2\int_{B_{\mu_{1,k_{n}}^{\alpha}}}U_{1,k_{n}}\mu_{1,k_{n}}b_{0,n}\Psi_{1,k_{n},0}
       \\&+O( |b_{0,n}| \mu_{1,k_{n}}^{\frac{N(3p-q+2)}{q+1} +\alpha(N-\frac{N(p+1)}{q+1})    } + \sum_{i=1}^{N}|b_{i,n}|\mu_{1,k_{n}}^{\frac{N(3p-q+2)}{q+1} +1+\alpha(N-\frac{N(p+1)}{q+1}-1)    }
       \\&+   |b_{0,n}| \mu_{1,k_{n}}^{\frac{N3(p+1)}{2(q+1)} -\theta    } + \sum_{i=1}^{N}|b_{i,n}|\mu_{1,k_{n}}^{\frac{N3(p+1)}{2(q+1)}+1 -\theta    }),
    \end{split}
\end{equation*}
and
\begin{equation*}
    \begin{split}
        &\epsilon\beta_2\int_{\partial B_{\mu_{1,k_{n}}^{\alpha}}(P_{1,k_{n}})}u_{k_{n}}\eta_{n}\langle\nu,x-P_{1,k_{n}} \rangle ds
        \\=&O(  |b_{0,n}| \mu_{1,k_{n}}^{\frac{N(3p+2-q)}{q+1}   +\alpha(N-\frac{2N(p+1)}{q+1}) } + \sum_{i=1}^{N}|b_{i,n}|  \mu_{1,k_{n}}^{\frac{N(3p+2-q)}{q+1}   +\alpha(N-\frac{2N(p+1)}{q+1}) } ).
    \end{split}
\end{equation*}
We calculate the second term of the right-hand term in \eqref{eq4.1}.
\begin{equation*}
\begin{split}
    &\int_{\partial B_{\mu_{1,k_{n}}^{\alpha}}(P_{1,k_{n}})}u_{k_{n}}^{q}\eta_{n}\langle \nu,x-P_{1,k_{n}}\rangle
    \\=&\int_{\partial B_{\mu_{1,k_{n}}^{\alpha}}(P_{1,k_{n}})}U_{1,k_{n}}^{q}b_{0,n}\mu_{1,k_{n}}\psi_{k_{n}}\langle \nu,x-P_{1,k_{n}}\rangle
    \\&+O( |b_{0,n}|\mu_{1,k_{n}}^{ NP+\alpha(N-\frac{N(p+1)q}{q+1})} + \sum_{i=1}^{N}|b_{i,n}| \mu_{1,k_{n}}^{ NP+1+\alpha(N-\frac{N(p+1)q}{q+1})}
    )
    \\=&\int_{\partial B_{\mu_{1,k_{n}}^{\alpha}}(P_{1,k_{n}})}U_{1,k_{n}}^{q}b_{0,n}\mu_{1,k_{n}}\psi_{k_{n}}\langle \nu,x-P_{1,k_{n}}\rangle
    +o(|b_{0,n}|\mu_{1,k_{n}}^{\frac{N(p+1)}{q+1}})
        +o(\sum_{i=1}^{N}|b_{i,n}|\mu_{1,k_{n}}^{\frac{N(p+1)}{q+1}+1}  ).
\end{split}
\end{equation*}
Next, we calculate the third term of the right-hand term in \eqref{eq4.1}.
\begin{equation*}
    \begin{split}
         &\int_{\partial B_{\mu_{1,k_{n}}^{\alpha}}(P_{1,k_{n}})}v_{k_{n}}^{p}\xi_{n}\langle \nu,x-P_{1,k_{n}}\rangle
         =\int_{\partial B_{\mu_{1,k_{n}}^{\alpha}}(P_{1,k_{n}})}V_{1,k_{n}}^{p}b_{0,n}\mu_{1,k_{n}}\phi_{k_{n}}\langle \nu,x-P_{1,k_{n}}\rangle
         \\&+O(|b_{0,n}|\mu_{1,k_{n}}^{\frac{N(p+1)}{q+1} +\alpha(N-(N-2)p)  }  +  \sum_{i=1}^{N}|b_{i,n}|  \mu_{1,k_{n}}^{\frac{N(p+1)}{q+1}+1 +\alpha(N-(N-2)p-1)  }     )
         \\&+O( |b_{0,n}| \mu_{1,k_{n}}^{\frac{N(p+1)}{q+1}+\frac{N(qp-1)}{q+1} +\alpha(N-(N-2)(p+\frac{1}{2} )  -1-\theta)    }
         \\&+ \sum_{i=1}^{N}|b_{i,n}| \mu_{1,k_{n}}^{\frac{N(p+1)}{q+1}+1+\frac{N(qp-1)}{q+1}
        +\alpha(N-(N-2)(p+\frac{1}{2} )  -1-\theta)    }).
    \end{split}
\end{equation*}
Thus
\begin{equation}\label{eq4.4}
    \begin{split}
        &\quad RHS \,\,of \,\, Eq.4.48
        =(-N+\frac{2N}{q+1})\epsilon\beta_2\int_{B_{\mu_{1,k_{n}}^{\alpha}}}U_{1,k_{n}}\mu_{1,k_{n}}b_{0,n}\Psi_{1,k_{n},0}
\\&+\int_{\partial B_{\mu_{1,k_{n}}^{\alpha}}(P_{1,k_{n}})}U_{1,k_{n}}^{q}b_{0,n}\mu_{1,k_{n}}\psi_{k_{n}}\langle \nu,x-P_{1,k_{n}}\rangle
        +\int_{\partial B_{\mu_{1,k_{n}}^{\alpha}}(P_{1,k_{n}})}V_{1,k_{n}}^{p}b_{0,n}\mu_{1,k_{n}}\phi_{k_{n}}\langle \nu,x-P_{1,k_{n}}\rangle
        \\&+o(|b_{0,n}|\mu_{1,k_{n}}^{\frac{N(p+1)}{q+1}})
        +o(\sum_{i=1}^{N}|b_{i,n}|\mu_{1,k_{n}}^{\frac{N(p+1)}{q+1}+1}  ).
    \end{split}
\end{equation}
By careful computation, we see that
\begin{equation*}
    \begin{split}
        &\int_{  B_{\mu_{1,k_{n}}^{\alpha}}(P_{1,k_{n}}) }V_{1,k_{n}}^{p}\langle    \nabla b_{0,n}\mu_{1,k_{n}} \Phi_{1,k_{n},0} ,x-P_{1,k_{n}}  \rangle
        \\&+\int_{  B_{\mu_{1,k_{n}}^{\alpha}}(P_{1,k_{n}}) }pb_{0,n}V_{1,k_{n}}^{p-1}\mu_{1,k_{n}} \Phi_{1,k_{n},0} \langle \nabla V_{1,k_{n}} ,x-P_{1,k_{n}}  \rangle
        \\&+\int_{  B_{\mu_{1,k_{n}}^{\alpha}}(P_{1,k_{n}}) }U_{1,k_{n}}^{q}\langle    \nabla b_{0,n}\mu_{1,k_{n}} \Psi_{1,k_{n},i},x-P_{1,k_{n}}  \rangle
        \\&+\int_{  B_{\mu_{1,k_{n}}^{\alpha}}(P_{1,k_{n}}) }b_{0,n}qU_{1,k_{n}}^{q-1}\mu_{1,k_{n}} \Psi_{1,k_{n},0} \langle \nabla U_{1,k_{n}} ,x-P_{1,k_{n}}  \rangle
        \\&+\frac{N}{q+1}\int_{  B_{\mu_{1,k_{n}}^{\alpha}}(P_{1,k_{n}}) }b_{0,n}qU_{1,k_{n}}^{q-1}\mu_{1,k_{n}} \Psi_{1,k_{n},0}U_{1,k_{n}}+\frac{N}{q+1}\int_{  B_{\mu_{1,k_{n}}^{\alpha}}(P_{1,k_{n}}) }U_{1,k_{n}}^{q}b_{0,n}\mu_{1,k_{n}} \Psi_{1,k_{n},0}
        \\&+\frac{N}{p+1}\int_{  B_{\mu_{1,k_{n}}^{\alpha}}} V_{1,k_{n}}^{p}b_{0,n}\mu_{1,k_{n}} \Phi_{1,k_{n},0}+\frac{N}{p+1}\int_{  B_{\mu_{1,k_{n}}^{\alpha}}(P_{1,k_{n}}) }b_{0,n} pV_{1,k_{n}}^{p}\mu_{1,k_{n}} \Phi_{1,k_{n},0}
        \\&+\int_{\partial B_{\mu_{1,k_{n}}^{\alpha}}(P_{1,k_{n}})}U_{1,k_{n}}^{q}b_{0,n}\mu_{1,k_{n}}\Psi_{1,k_{n},0}\langle \nu,x-P_{1,k_{n}}\rangle
        \\&+\int_{\partial B_{\mu_{1,k_{n}}^{\alpha}}(P_{1,k_{n}})}V_{1,k_{n}}^{p}b_{0,n}\mu_{1,k_{n}}\Phi_{1,k_{n},0}\langle \nu,x-P_{1,k_{n}}\rangle
    =0.
    \end{split}
\end{equation*}

Thus, from \eqref{eq4.1}, \eqref{eq4.2}, \eqref{eq4.3} and \eqref{eq4.4}, we have
\begin{equation*}
    \begin{split}
        &-\frac{N}{q+1}b_{0,n}(\frac{b_{N,p}}{\gamma_{N}})^{p}\mu_{1,k_{n}}^{\frac{Np}{q+1}}\tilde{H}( P_{1,k_{n}},P_{1,k_{n}}) \int_{  B_{\mu_{1,k_{n}}^{\alpha}}(P_{1,k_{n}}) }qU_{1,k_{n}}^{q-1}\mu_{1,k_{n}} \Psi_{1,k_{n},0}
        \\&-\frac{Np}{q+1}(\frac{ b_{N,p}b_{0,n}}{\gamma_{N}})^{p} \mu_{1,k_{n}}^{ \frac{Np}{q+1}} \tilde{H}(  P_{1,k_{n}},P_{1,k_{n}} )\int_{  B_{\mu_{1,k_{n}}^{\alpha}}(P_{1,k_{n}}) } U_{1,k_{n}}^{q}
        \\&-(-N+\frac{2N}{q+1})\epsilon\beta_2\int_{B_{\mu_{1,k_{n}}^{\alpha}}}U_{1,k_{n}}\mu_{1,k_{n}}b_{0,n}\Psi_{1,k_{n},0}
        \\=&o(|b_{0,n}|\mu_{1,k_{n}}^{\frac{N(p+1)}{q+1}})
        +o(\sum_{i=1}^{N}|b_{i,n}|\mu_{1,k_{n}}^{\frac{N(p+1)}{q+1}+1}  ).
    \end{split}
\end{equation*}
From equation (6.1) of \cite{Kim-Pis}, we have
\begin{equation*}
    \begin{split}
        \tilde{H}( P_{1,k_{n}},P_{1,k_{n}}){\int_{\mathbb{R}^{N}}U_{1,0}^{q}}
        \frac{N}{q+1}(\frac{b_{N,p}}{\gamma_{N}})^{p}d_{1,k_{n}}^{\frac{N(p+1)}{q+1}-1}-\frac{\beta_2N(q-1)\int_{\mathbb{R}^{N}}U_{1,0}^{2}}{2(q+1)}d_{1,k_{n}}^{\frac{N(q-1)}{q+1}-1} = o_{\mu_{1,k_{n}}}(1).
    \end{split}
\end{equation*}
Thus we have
\begin{align*}
    (( q-1)(p+1)-(q-1)(1-q) +o(1) )b_{0,n}\mu^{\frac{N(p+1)}{q+1}} = o(\mu^{\frac{N(p+1)}{q+1}+1} ).
\end{align*}
So $b_{0,n} = o(\mu)$.

\textbf{Step 2.}  We prove $b_{i,n} \to 0$ for $i=1,\cdots,N$. We apply the identities in Lemma \ref{le2.1} in the ball $B_{\mu_{1,k_{n}}^{\alpha}}(P_{1,k_{n}}) $, where $\alpha>0$ is the same constant as in \textbf{Step 1}.  We have
\begin{equation}\label{eq4.5}
    \begin{split}
         &\int_{\partial B_{\mu_{1,k_{n}}^{\alpha}}(P_{1,k_{n}})}(-\frac{\partial u_{k_{n}}}{\partial\nu}\frac{\partial\xi_{n}}{\partial x_i} -\frac{\partial \xi_{n}}{\partial\nu}\frac{\partial u_{k_{n}}}{\partial x_i} +\frac{\partial u_{k_{n}}}{\partial\nu}\frac{\partial\xi_{n}}{\partial\nu}\nu_i)ds
         \\&+\int_{\partial B_{\mu_{1,k_{n}}^{\alpha}}(P_{1,k_{n}})}(-\frac{\partial v_{k_{n}}}{\partial\nu}\frac{\partial\eta_{n}}{\partial  x_i} -\frac{\partial \eta_{n}}{\partial\nu}\frac{\partial v_{k_{n}}}{\partial  x_i} +\frac{\partial v_{k_{n}}}{\partial\nu}\frac{\partial\eta_{n}}{\partial\nu}\nu_i)ds
    \\=&\int_{\partial B_{\mu_{1,k_{n}}^{\alpha}}(P_{1,k_{n}})}(u_{k_{n}}^{q}\eta_{n}+v_{k_{n}}^{p}\xi_{n})\nu_ids+\epsilon\beta_2\int_{\partial B_{\mu_{1,k_{n}}^{\alpha}}(P_{1,k_{n}})}u_{k_{n}}\eta_{n}\nu_i ds.
    \end{split}
\end{equation}
We compute that
\begin{equation*}
    \begin{split}
       &\int_{\partial B_{\mu_{1,k_{n}}^{\alpha}}(P_{1,k_{n}})}(-\frac{\partial u_{k_{n}}}{\partial\nu}\frac{\partial\xi_{n}}{\partial x_i} -\frac{\partial \xi_{n}}{\partial\nu}\frac{\partial u_{k_{n}}}{\partial x_i} +\frac{\partial u_{k_{n}}}{\partial\nu}\frac{\partial\xi_{n}}{\partial\nu}\nu_i)ds
         \\&+\int_{\partial B_{\mu_{1,k_{n}}^{\alpha}}(P_{1,k_{n}})}(-\frac{\partial v_{k_{n}}}{\partial\nu}\frac{\partial\eta_{n}}{\partial  x_i} -\frac{\partial \eta_{n}}{\partial\nu}\frac{\partial v_{k_{n}}}{\partial  x_i} +\frac{\partial v_{k_{n}}}{\partial\nu}\frac{\partial\eta_{n}}{\partial\nu}\nu_i)ds
         \\=&\int_{ B_{\mu_{1,k_{n}}^{\alpha}}(P_{1,k_{n}})}-\Delta u_{k_{n}} \frac{\partial \xi_{n}}{\partial x_i}-\Delta \xi_{n}\frac{\partial u_{k_{n}}}{\partial x_i}dy+\int_{ B_{\mu_{1,k_{n}}^{\alpha}}(P_{1,k_{n}})}-\Delta v_{k_{n}} \frac{\partial \eta_{n}}{\partial x_i}-\Delta \eta_{n}\frac{\partial v_{k_{n}}}{\partial x_i}dy.
    \end{split}
\end{equation*}
Set
\begin{equation*}
    \begin{split}
        I_{2,n}(U,V,\eta,\xi)
        = \int_{ B_{\mu_{1,k_{n}}^{\alpha}}(P_{1,k_{n}})}-\Delta U \frac{\partial \xi}{\partial x_i}-\Delta \xi \frac{\partial U}{\partial x_i}dy+\int_{ B_{\mu_{1,k_{n}}^{\alpha}}(P_{1,k_{n}})}-\Delta V \frac{\partial \eta}{\partial x_i}-\Delta \eta \frac{\partial V}{\partial x_i}dy.
    \end{split}
\end{equation*}
We have
\begin{equation*}
    \begin{split}
      &\quad I_{2,n}(U_{1,k_{n}},V_{1,k_{n}},\eta_{n},\xi_{n})
      \\= & \int_{ B_{\mu_{1,k_{n}}^{\alpha}}(P_{1,k_{n}})}-\Delta PU_{d_{1,k_{n}},P_{1,k_{n}}} \frac{\partial \sum_{j=0}^{N}b_{j,n}\mu_{1,k_{n}} P\Phi_{1,k_{n},j}}{\partial x_i}
      \\&+ \int_{ B_{\mu_{1,k_{n}}^{\alpha}}(P_{1,k_{n}})}-\Delta PV_{1,k_{n}} \frac{\partial \sum_{j=0}^{N}b_{j,n}\mu_{1,k_{n}} \widetilde{P\Psi}_{1,k_{n},j}}{\partial x_i}
      +\int_{ B_{\mu_{1,k_{n}}^{\alpha}}(P_{1,k_{n}})}-\Delta \sum_{j=0}^{N}b_{j,n}\mu_{1,k_{n}} \widetilde{P\Psi}_{1,k_{n},j}  \frac{\partial  PV_{1,k_{n}}}{\partial x_i}
      \\&+ \int_{ B_{\mu_{1,k_{n}}^{\alpha}}(P_{1,k_{n}})}-\Delta\sum_{j=0}^{N}b_{j,n}\mu_{1,k_{n}} P\Phi_{1,k_{n},j}\frac{\partial  PU_{d_{1,k_{n}},P_{1,k_{n}}}}{\partial x_i}+ G_{n}
      \\=:&I_{2,n}^{1}+I_{2,n}^{2}+I_{2,n}^{3}+I_{2,n}^{4}+G_n,
    \end{split}
\end{equation*}
where $G_{n}$ is the remainder term.


By directly computing, we get that
\begin{equation*}
    \begin{split}
        I_{2,n}^{1}+I_{2,n}^{3}=&\int_{ B_{\mu_{1,k_{n}}^{\alpha}}(P_{1,k_{n}})} V_{1,k_{n}}^{p}b_{i,n}\frac{\partial \mu_{1,k_{n}} \Phi_{1,k_{n},j}}{\partial x_i}+\int_{ B_{\mu_{1,k_{n}}^{\alpha}}(P_{1,k_{n}})} pV_{1,k_{n}}^{p-1}b_{i,n}\mu_{1,k_{n}} \Phi_{1,k_{n},i}\frac{\partial  V_{1,k_{n}}}{\partial x_i}
       \\&+O(b_{0,n}\mu_{1,k_{n}}^{\frac{N(p+1)}{q+1}}) + o(  \sum_{j=1}^{N}b_{j,n}\mu_{1,k_{n}}^{\frac{N(p+1)}{q+1}+1} ),
    \end{split}
\end{equation*}
Applying Lemma \ref{ble5}, we have
\begin{equation*}
    \begin{split}
        I_{2,n}^{2}+I_{2,n}^{4}=&-\frac{\mu_{1,k_n}^{\frac{N(p+1)}{q+1}+1}}{p+1}(\frac{b_{N,p}}{\gamma_{N}})^{p}\int_{\mathbb{R}^{N}}U_{0,1}^{q}\sum_{j=1}^{N}b_{j,n}\frac{\partial^{2}\tau }{\partial x_{i}\partial x_{j}}(P_{0}) + o(\mu_{1,k_n}^{\frac{N(p+1)}{q+1}+1} ).
    \end{split}
\end{equation*}
Thus, we have
\begin{equation}\label{eq4.6}
\begin{split}
  &I_{2,n}( PU_{d_{1,k_{n}},P_{1,k_{n}}},PV_{1,k_{n}},\sum_{j=0}^{N}b_{j,n}\mu_{1,k_{n}} \widetilde{P\Psi}_{1,k_{n},j}, \sum_{j=0}^{N}b_{j,n}\mu_{1,k_{n}} P\Phi_{1,k_{n},j} )
  \\=&\int_{ B_{\mu_{1,k_{n}}^{\alpha}}(P_{1,k_{n}})} V_{1,k_{n}}^{p}b_{i,n}\frac{\partial \mu_{1,k_{n}} \Phi_{1,k_{n},j}}{\partial x_i}
 \\& -p\mu_{1,k_{n}}^{\frac{N}{q+1}}\frac{b_{N,p}}{\gamma_{N}}H(P_{1,k_{n}},P_{1,k_{n}} )\int_{ B_{\mu_{1,k_{n}}^{\alpha}}(P_{1,k_{n}})}V_{1,k_{n}}^{p-1}b_{i,n}\frac{\partial \mu_{1,k_{n}} \Phi_{1,k_{n},j}}{\partial x_i}
  \\&+\int_{ B_{\mu_{1,k_{n}}^{\alpha}}(P_{1,k_{n}})} U_{1,k_{n}}^{q}b_{i,n}\frac{\partial \mu_{1,k_{n}} \Psi_{1,k_{n},j}}{\partial x_i}+\int_{ B_{\mu_{1,k_{n}}^{\alpha}}(P_{1,k_{n}})} pV_{1,k_{n}}^{p-1}b_{i,n}\mu_{1,k_{n}} \Phi_{1,k_{n},i}\frac{\partial  V_{1,k_{n}}}{\partial x_i}
  \\&+\int_{ B_{\mu_{1,k_{n}}^{\alpha}}(P_{1,k_{n}})}qU_{1,k_{n}}^{q-1}b_{i,n}\mu_{1,k_{n}} \Psi_{1,k_{n},j}\frac{\partial  U_{1,k_{n}}}{\partial x_i}
        \\&-\frac{\mu_{1,k_n}^{\frac{N(p+1)}{q+1}+1}}{p+1}(\frac{b_{N,p}}{\gamma_{N}})^{p}\int_{\mathbb{R}^{N}}U_{0,1}^{q}\sum_{j=1}^{N}b_{j,n}\frac{\partial^{2}\tau }{\partial x_{i}\partial x_{j}}(P_{0})
        +O(b_{0,n}| \mu_{1,k_{n}}^{\frac{N(p+1)}{q+1}  } )
        \\&+o(\sum_{j=1}^{N}|b_{j,n}| \mu_{1,k_{n}}^{\frac{N(p+1)}{q+1}+1}  ).
\end{split}
\end{equation}

It is easy to check that
\begin{equation*}
\begin{split}\label{eq4.7}
G_{n} =O(|b_{0,n}| \mu_{1,k_{n}}^{\frac{N(p+1)}{q+1}   }) + o(\sum_{j=1}^{N}|b_{j,n}|\mu_{1,k_{n}}^{\frac{N(p+1)}{q+1}+1   } ).
\end{split}
\end{equation*}

Now we compute the right hand side of equation \eqref{eq4.5}. Direct computations shows that
\begin{equation*}
    \begin{split}
        \epsilon\beta_2\int_{\partial B_{\mu_{1,k_{n}}^{\alpha}}(P_{1,k_{n}})}u_{k_{n}}\eta_{n}\nu_i ds = O( \mu_{1,k_{n}}^{\frac{N(3p-q+2)}{q+1}-\alpha(N+6)}),
    \end{split}
\end{equation*}
\begin{equation*}
    \begin{split}
        \int_{\partial B_{\mu_{1,k_{n}}^{\alpha}}P_{1,k_{n}})}u_{k_{n}}^{q}\eta_{n}\nu_ids
        =&\int_{\partial B_{\mu_{1,k_{n}}^{\alpha}}(P_{1,k_{n}})}U_{1,k_{n}}^{q}b_{i,n}\mu_{1,k_{n}}\Psi_{1,k_{n},i}\nu_i
    \\&+O( |b_{0,n}|\mu_{1,k_{n}}^{ NP+\alpha(N-1-\frac{N(p+1)q}{q+1})} + \sum_{i=1}^{N}|b_{i,n}| \mu_{1,k_{n}}^{ NP+1+\alpha(N-1-\frac{N(p+1)q}{q+1})}
    \\&+  |b_{0,n}|\mu_{1,k_{n}}^{ NP+\alpha (-N-6)} +\sum_{i=1}^{N}|b_{i,n}| \mu_{1,k_{n}}^{ NP+1+\alpha  (-N-6)}),
    \end{split}
\end{equation*}
and
\begin{equation*}
    \begin{split}
    \int_{\partial B_{\mu_{1,k_{n}}^{\alpha}}P_{1,k_{n}})}v_{k_{n}}^{p}\xi_{n}\nu_ids
        =&\int_{\partial B_{\mu_{1,k_{n}}^{\alpha}}(P_{1,k_{n}})}V_{1,k_{n}}^{p}b_{i,n}\mu_{1,k_{n}}\Phi_{1,k_{n},i}\nu_i
         \\&+O(|b_{0,n}|\mu_{1,k_{n}}^{\frac{N(p+1)}{q+1} +\alpha(N-1-(N-2)p)  }  +  \sum_{i=1}^{N}|b_{i,n}|  \mu_{1,k_{n}}^{\frac{N(p+1)}{q+1}+1 +\alpha(N-2-(N-2)p)  }     )
         \\&+O( |b_{0,n}| \mu_{1,k_{n}}^{\frac{N(p+1)}{q+1}+\frac{N(qp-1)}{q+1} +\alpha(N-1-(N-2)(p+\frac{1}{2} )  -1-\theta)    }
         \\&+ \sum_{j=1}^{N}|b_{j,n}| \mu_{1,k_{n}}^{\frac{N(p+1)}{q+1}+1+\frac{N(qp-1)}{q+1}
        +\alpha(N-2-(N-2)(p+\frac{1}{2} )  -1-\theta)    }).
    \end{split}
\end{equation*}
Thus,
\begin{equation}\label{eq4.8}
    \begin{split}
   &RHS\, of \,\eqref{eq4.5}
    \\=&\int_{\partial B_{\mu_{1,k_{n}}^{\alpha}}(P_{1,k_{n}})}U_{1,k_{n}}^{q}b_{i,n}\mu_{1,k_{n}}\Psi_{1,k_{n},i}\nu_i+\int_{\partial B_{\mu_{1,k_{n}}^{\alpha}}(P_{1,k_{n}})}V_{1,k_{n}}^{p}b_{i,n}\mu_{1,k_{n}}\Phi_{1,k_{n},i}\nu_i
    \\&+O(|b_{0,n}| \mu_{1,k_{n}}^{\frac{N(p+1)}{q+1}   -\alpha(N+6)} + \sum_{j=1}^{N}|b_{j,n}|\mu_{1,k_{n}}^{\frac{N(p+1)}{q+1}+1   -\alpha(N+6)} ).
    \end{split}
\end{equation}

Noting that
\begin{equation*}
    \begin{split}
       & \int_{ B_{\mu_{1,k_{n}}^{\alpha}}(P_{1,k_{n}})} V_{1,k_{n}}^{p}b_{i,n}\frac{\partial \mu_{1,k_{n}} \Phi_{1,k_{n},i}}{\partial x_i}+\int_{ B_{\mu_{1,k_{n}}^{\alpha}}(P_{1,k_{n}})} pV_{1,k_{n}}^{p-1}b_{i,n}\mu_{1,k_{n}} \Phi_{1,k_{n},i}\frac{\partial  V_{1,k_{n}}}{\partial x_i}
        \\&+\int_{ B_{\mu_{1,k_{n}}^{\alpha}}(P_{1,k_{n}})} pV_{1,k_{n}}^{p-1}b_{i,n}\mu_{1,k_{n}} \Phi_{1,k_{n},i}\frac{\partial  V_{1,k_{n}}}{\partial x_i}+\int_{ B_{\mu_{1,k_{n}}^{\alpha}}(P_{1,k_{n}})}qU_{1,k_{n}}^{q-1}b_{i,n}\mu_{1,k_{n}} \Psi_{1,k_{n},j}\frac{\partial  U_{1,k_{n}}}{\partial x_i}
        \\&-\int_{\partial B_{\mu_{1,k_{n}}^{\alpha}}(P_{1,k_{n}})}U_{1,k_{n}}^{q}b_{0,n}\mu_{1,k_{n}}\Psi_{1,k_{n},i}\nu_i-\int_{\partial B_{\mu_{1,k_{n}}^{\alpha}}(P_{1,k_{n}})}V_{1,k_{n}}^{p}b_{0,n}\mu_{1,k_{n}}\Phi_{1,k_{n},i}\nu_i
        =0.
    \end{split}
\end{equation*}
So, combining \eqref{eq4.5}, \eqref{eq4.6}, \eqref{eq4.7} and \eqref{eq4.8}, we have
\begin{equation*}
    \begin{split}
    \sum_{j=1}^{N}\mu_{1,k_{n}}^{\frac{N(p+1)}{q+1}}b_{j,n}
          \frac{\partial^{2} \tau }{\partial x_{i}\partial x_{j} }(P_0)  .
    =O(|b_{0,n}| \mu_{1,k_{n}}^{\frac{N(p+1)}{q+1}   }) + o(\sum_{j=1}^{N}|b_{j,n}|\mu_{1,k_{n}}^{\frac{N(p+1)}{q+1}+1   } ).
    \end{split}
\end{equation*}
Since $b_{0,n} = o( \mu_{1,k_{n}})$, and  the matrix $\{ \frac{\partial^{2} \tau }{\partial x_{i}\partial x_{j} }(P_0)  \}$ is non-degenerate,
    then $b_{j,n} \to 0$.
\end{proof}

{\bf Proof of Theorem  1.1.} We have

\begin{equation*}
    \begin{split}
        |  \eta_{n}(x) |=&\int_{\Omega}G(x,y) p V_{1,k_{n}}^{p-1}\xi_{n}dy\\
        \leq &C||(\eta_{n},\xi_{n}  )||_{*} \int_{\Omega}\frac{1}{|x-y|^{N-2}}\frac{\mu_{1,k_{n}}^{-\frac{Np}{p+1}}}{   (1+\mu_{1,k_{n}}^{-1}| y- P_{1,k_{n}}  |)^{\frac{N}{2}+(N-2)(p-1)+\theta}}
        \\\leq &C\frac{ \mu_{1,k_{n}}^{-\frac{N}{q+1}}  }{ (1+\mu_{1,k_{n}}^{-1}| y- P_{1,k_{n}}  |)^ { \frac{N(p+1)}{2(q+1)}+\theta+\theta_{1}}   },
    \end{split}
\end{equation*}
and
\begin{equation*}
    \begin{split}
        | \xi_{n}(x)  |= &\int_{\Omega}G(x,y)\bigg( qU_{n}^{q-1}\eta_{n} +\epsilon\beta_{2}U_{1,k_{n}} \eta_{n}   \bigg)\\
        \leq &C||(\eta_{n},\xi_{n}  )||_{*} \int_{\Omega}\frac{1}{|x-y|^{N-2}}\bigg(\frac{\mu_{1,k_{n}}^{-\frac{Nq}{q+1}}}{   (1+\mu_{1,k_{n}}^{-1}| y- P_{1,k_{n}}  |)^{ \frac{N(p+1)}{q+1}(q-\frac{1}{2})+\theta}}
        \\&+ \frac{\mu_{1,k_{n}}^{\frac{Np}{q+1}-\frac{N}{p+1}}}{  (1+\mu_{1,k_{n}}^{-1}| y- P_{1,k_{n}}  |)^{ \frac{3N(p+1)}{2(q+1)}+\theta}       }            \bigg)
        \\\leq &C\frac{ \mu_{1,k_{n}}^{-\frac{N}{p+1}}  }{ (1+\mu_{1,k_{n}}^{-1}| y- P_{1,k_{n}}  |)^ { \frac{N}{2}+\theta+\theta_{1}}   }.
    \end{split}
\end{equation*}
Thus
\begin{equation*}
    \begin{split}
       &(\frac{ \mu_{1,k_{n}}^{-\frac{N}{q+1}}  }{ (1+\mu_{1,k_{n}}^{-1}| y- P_{1,k_{n}}  |)^ { \frac{N(p+1)}{2(q+1)}+\theta}   } )^{-1}|  \eta_{n}(x) |
       +\frac{ \mu_{1,k_{n}}^{-\frac{N}{p+1}}  }{ (1+\mu_{1,k_{n}}^{-1}| y- P_{1,k_{n}}  |)^ { \frac{N}{2}+\theta }  } | \xi_{n}(x)  |
       \\\leq &C\bigg(\frac{  (1+\mu_{1,k_{n}}^{-1}| y- P_{1,k_{n}}  |)^ { \frac{N(p+1)}{2(q+1)}+\theta}  }{  (1+\mu_{1,k_{n}}^{-1}| y- P_{1,k_{n}}  |)^ { \frac{N(p+1)}{2(q+1)}+\theta+\theta_1}  }
       + \frac{  (1+\mu_{1,k_{n}}^{-1}| y- P_{1,k_{n}}  |)^ { \frac{N}{2}+\theta } }{(1+\mu_{1,k_{n}}^{-1}| y- P_{1,k_{n}}  |)^ { \frac{N}{2}+\theta+\theta_{1} }}      \bigg).
    \end{split}
\end{equation*}
Since $ (\tilde{\eta}_{n}(y),\tilde{\xi}_{n}(y)) \to 0$ in $B_{R}(0)$ and $||(\eta_{n},\xi_{n}  )||_{*}=1$, we see that
\begin{align*}
    \left(\frac{ \mu_{1,k_{n}}^{-\frac{N}{q+1}}  }{ (1+\mu_{1,k_{n}}^{-1}| y- P_{1,k_{n}}  |)^ { \frac{N(p+1)}{2(q+1)}+\theta}   } \right)^{-1}|  \eta_{n}(x) |
       +\frac{ \mu_{1,k_{n}}^{-\frac{N}{p+1}}  }{ (1+\mu_{1,k_{n}}^{-1}| y- P_{1,k_{n}}  |)^ { \frac{N}{2}+\theta }  } | \xi_{n}(x)  |
\end{align*}
attains its maximum in $\Omega \backslash  B_{R\mu_{1,k_{n}}}(P_{1,k_{n}})$. Thus $||(\eta_{n},\xi_{n}  )||_{*}=o(1)$, This is a contradiction to $ ||(\eta_{n},\xi_{n}  )||_{*}=1 $.

\begin{appendix}

\section{Preliminaries results}
 Let $G$ be the Green's function of the Laplacian $-\Delta$ in $\Omega$ with Dirichlet
boundary condition. And $H$ be its regular part, then $G(x,y)=S(x,y)-H(x,y)$ with $S(x,y) = \frac{\gamma_{N}}{|x-y|^{N-2}}$. 
In addition, we introduce a function $\tilde{G}=\tilde{G}_{\Omega} :\Omega \times \Omega \to \mathbb{R}$ satisfying
\begin{equation*}
    \begin{cases}
    -\Delta_{x}\tilde{G}(x,y)=G^{p}(x,y), &\hbox{ for }x\in\Omega,
    \\\tilde{G}=0,&\hbox{ for }x\in\partial\Omega,
    \end{cases}
\end{equation*}
for each $y\in\Omega$, and  its regular part $\tilde{H} = \tilde{H}_{\Omega} :\Omega \times \Omega \to \mathbb{R}$ by
\begin{equation*}
  \begin{split}
      \tilde{H}=\frac{\tilde{\gamma}_{N,p}}{|x-y|^{(N-2)p-2}}-\tilde{G}(x,y),
  \end{split}
\end{equation*}
where
\begin{equation*}
    \begin{split}
        \tilde{\gamma}_{N,p}:=\frac{\gamma_{N}^{p}}{((N-2)p-2)(N-(N-2)p)}>0.
    \end{split}
\end{equation*}

\begin{lemma}\label{Prele1}
Let $\widehat{H}:\Omega\times\Omega\to\mathbb{R}$ be a smooth function such that
\begin{equation*}
    \begin{cases}
    -\Delta_{x}\widehat{H}(x,y)=0, &\hbox{ for }x\in\Omega,
    \\\widehat{H}(x,y)=\frac{1}{|x-y|^{(N-2)p-2}},&\hbox{ for }x\in\partial\Omega,
    \end{cases}
\end{equation*}
for any $y\in\Omega$. Then we have
$$
P U_{1}(x)=U_{1}(x)-a_{N, p} \mu_{1}^{\frac{N p}{q+1}} \widehat{H}\left(x, P_{1}\right)+o\left(\mu^{\frac{N p}{q+1}}\right),
$$
and
$$
P V_{1}(x)=V_{1}(x)-\left(\frac{b_{N, p}}{\gamma_{N}}\right) \mu_{1}^{\frac{N}{q+1}} H\left(x, P_{1}\right)+o\left(\mu^{\frac{N}{q+1}}\right),
$$
where $PU_1, PV_1$ are the same as in \eqref{inteq7}.
\end{lemma}

\begin{proof}
Lemma \ref{Prele1} is proved in  \cite{Kim-Pis} by using comparison principle. We give a different proof. Indeed,  we have
\begin{align*}
    PV_{1}(x)=\int_{\Omega}G(x,y)U_{1}^{q}dy,\quad V_{1}(x)=\int_{\mathbb R^{N}}S(x,y)U_{1}^{q}dy.
\end{align*}
Thus
\begin{align*}
     PV_{1}(x)-V_{1}(x)=&-\int_{\Omega^{c}}S(x,y)U_{1}^{q}dy-\int_{\Omega}H(x,y)U_{1}^{q}dy:=\mathcal{A}_{1}+ \mathcal{A}_{2}.
\end{align*}
Since $dist(P_{1},\Omega) >\delta_{2}>0,$ we have
\begin{align*}
    | \mathcal{A}_{1} |\leq &C\int_{\Omega^{c}}\frac{1}{|x-y|^{N-2}}\frac{\mu_{1}^{\frac{Npq}{q+1}}}{(1 + |y-P_{1}|)^{\frac{N(p+1)q}{q+1}} }
    \leq C\mu_{1}^{\frac{Npq}{q+1}}.
\end{align*}
And
\begin{align*}
    \mathcal{A}_{2} =  \int_{\tilde{\Omega}} H(x, \mu_{1}y+P_{1} )U_{1,0}^{q}(y)dy\mu_{1}^{\frac{N}{q+1}},
\end{align*}
where $ \tilde{\Omega} = \mu_{1}^{-1}( \Omega -P_{1}  )$.
Since $ H(x, \mu_{1}y+P_{1} )U_{1,0}^{q}(y) \leq CU_{1,0}^{q}(y)  $, by using the dominated convergence theorem, we have
\begin{align*}
    \int_{\tilde{\Omega}} H(x, \mu_{1}y+P_{1} )U_{1,0}^{q}(y)dy =  H(x,P_{1})\int_{\mathbb{R}^{N}}U_{1,0}^{q}(y)dy + o(1).
\end{align*}
So,
\begin{align*}
    PV_{1}(x)-V_{1}(x) = \mu_{1}^{\frac{N}{q+1}}H(x,P_{1})\int_{\mathbb{R}^{N}}U_{1,0}^{q}(y)dy +o(\mu_{1}^{\frac{N}{q+1}}) .
\end{align*}
Moreover,  $\displaystyle\int_{\mathbb{R}^{N}}U_{1,0}^{q}(y)dy = \frac{b_{N, p}}{\gamma_{N}} $.

\end{proof}
Similar, we can prove
\begin{lemma}
$
\frac{\partial P V_{1}}{\partial x_{i}}(x) = \frac{\partial V_{1}}{ \partial x_{i}}(x)-\left(\frac{b_{N, p}}{\gamma_{N}}\right)\mu_{1}^{\frac{N}{q+1}}\frac{\partial H}{ \partial x_{i}}\left(x, P_{1}\right)+o\left(\mu^{\frac{N}{q+1}}\right).
$
\end{lemma}
\begin{lemma}
[Theorem 2 in \cite{Lions}] There exist positive constants $a_{N,p}$ and $b_{N,p}$ depending
only on $N$ and $p$ such that
\begin{equation*}
    \begin{cases}
    \lim_{r\to\infty}r^{(N-2)p-2}U_{1,0}(r)=a_{N,p},
    \\\lim_{r\to\infty}r^{N-2}V_{1,0}(r)=b_{N,p},
    \end{cases}
\end{equation*}
where we wrote $U_{1,0}(x)= U_{1,0}(|x|)$, $V_{1,0}(x)= V_{1,0}(|x|)$  and $r = |x|$ by abusing notations.
Furthermore,
\begin{equation*}
    b_{N,p}^{p}=a_{N,p}((N-2)p-2)(N-(N-2)p).
\end{equation*}
\end{lemma}
\begin{lemma}
[Theorem 1 in \cite{Fra-Kim-Pis}] Set
\begin{align*}
    (\Psi_{0},\Phi_{0}) = \bigg(x\cdot\nabla U_{1,0}+\frac{N U_{1,0}}{q+1},x\cdot\nabla V_{1,0}+\frac{N V_{1,0}}{p+1}    \bigg),
\end{align*}
and
\begin{align*}
    (\Psi_{l},\Phi_{l}) = ( \frac{\partial U_{1,0}}{\partial x_{l}},\frac{\partial V_{1,0}}{\partial x_{l}} ),\,\,\,\hbox{ for }l=1,\cdots,N.
\end{align*}
Then the space of solutions to the linear system
\begin{equation*}
    \begin{cases}
    -\Delta \Psi =p V_{1,0}^{p-1}\Phi, \hbox{ in }\mathbb{R}^{N},
    \\ -\Delta \Phi =p U_{1,0}^{q-1}\Psi, \hbox{ in }\mathbb{R}^{N},
    \\( \Psi,\Phi  )\in\dot{W}^{2,\frac{p+1}{p}}( \mathbb{R}^{N}  )\times\dot{W}^{2,\frac{q+1}{q}}( \mathbb{R}^{N}  ),
    \end{cases}
\end{equation*}
is spanned by
\begin{align*}
    \{ (\Psi_{0},\Phi_{0}),(\Psi_{1},\Phi_{1}) ,\cdots,(\Psi_{N},\Phi_{N})   \}.
\end{align*}
\end{lemma}
Set
\begin{align*}
    (\Psi_{1,0},\Phi_{1,0}   ) = (\mu_{1}^{-\frac{N}{q+1}-1}\Psi_{0}(\mu_{1}^{-1}(x-P_{1}))  , \mu_{1}^{-\frac{N}{p+1}-1}\Phi_{0}(\mu_{1}^{-1}(x-P_{1}))      ),
\end{align*}
and
\begin{align*}
    (\Psi_{1,l},\Phi_{1,l}   ) = (\mu_{1}^{-\frac{N}{q+1}-1}\Psi_{l}(\mu_{1}^{-1}(x-P_{1}))  , \mu_{1}^{-\frac{N}{p+1}-1}\Phi_{l}(\mu_{1}^{-1}(x-P_{1}))      ),
\end{align*}
for $l=1,\cdots,N $. Let the pair $( P\Psi_{1,l},P\Phi_{1,l}  ) $ be the unique smooth solution of the system
\begin{equation*}
    \begin{cases}
    -\Delta P\Psi_{1,l} =p V_{1}^{p-1}P\Phi_{1,l}, \hbox{ in }\Omega,
    \\ -\Delta P\Phi_{1,l} =p U_{1}^{q-1}P\Psi_{1,l}, \hbox{ in }\Omega,
    \\P\Psi_{1,l}=P\Phi_{1,l}=0, \hbox{ in }\partial\Omega,
    \end{cases}
\end{equation*}
for $l=1,\cdots,N $. Then, we have the following Lemma.

\begin{lemma}
[Lemma 2.10. in \cite{Kim-Pis}]
$$
P \Psi_{1, l}(x)= \begin{cases}\Psi_{1, l}(x)+\frac{Np}{q+1}a_{N, p} \mu_{1}^{\frac{N p}{q+1}-1} \widehat{H}\left(x, P_{1}\right)+o\left(\mu^{\frac{N p}{q+1}-1}\right), & \text { for } l=0, \\
\Psi_{1,l}(x)+a_{N, p} \mu_{1}^{\frac{N p}{q+1}} \partial_{P_1, l} \widehat{H}\left(x, P_{1}\right)+o\left(\mu^{\frac{N p}{q+1}}\right), & \text { for } l=1, \cdots, N,\end{cases}
$$
and
$$
P \Phi_{1,l}(x)= \begin{cases}\Phi_{1,l}(x)+\left(\frac{N}{q+1}\frac{b_{N, p}}{\gamma_{N}}\right) \mu_{1}^{\frac{N}{q+1}-1} H\left(x, P_{1}\right)+o\left(\mu^{\frac{N}{q+1}-1}\right),
 & \text { for } l=0, \\ \Phi_{1,l}(x)+\left(\frac{b_{N, p}}{\gamma_{N}}\right) \mu_{1}^{\frac{N}{q+1}} \partial_{P_{1}, l} H\left(x, P_{1}\right)+o\left(\mu^{\frac{N}{q+1}}\right), & \text { for } l=1, \cdots, N,\end{cases}
$$
for $x \in \Omega$. Here, $\partial_{P_1, l} \widehat{H}(x, P_1)$ and $\partial_{P_1, l} H(x, P_1)$ stand for the $l$-th components of $\nabla_{P_1} \widehat{H}(x, P_1)$ and $\nabla_{P_1} H(x, P_1)$, respectively.
\end{lemma}
Recall that
\begin{equation*}
 \begin{cases}
-\Delta PU_{d_{1},P_{1}} = PV_{1}^{p}, &\hbox{ in }\Omega,
\\PU_{d_{1},P_{1}}=0,&\hbox{ on }\partial\Omega.
\end{cases}
\end{equation*}
Let $\tilde{G}_{d_{1},P_{1}}:=\Omega \to \mathbb{R}^{N}$ be the solution of
\begin{equation*}
 \begin{cases}
-\Delta \tilde{G}_{d_{1},P_{1}}(x) = d_{1}^{\frac{N}{q+1}}G(x,P_{1}), &\hbox{ for }x\in\Omega,
\\\tilde{G}=0,&\hbox{ for }x\in\partial\Omega.
\end{cases}
\end{equation*}
and $ \tilde{H}_{d_{1},P_{1}}:=\Omega \to \mathbb{R}^{N}$ be its regular part given by
\begin{equation*}
    \begin{split}
    \tilde{H}_{d_{1},P_{1}} = d_{1}^{\frac{N}{q+1}}\frac{\tilde{\gamma}_{N,p}}{|x-P_{1}|^{(N-2)p-2}}-\tilde{G}_{d_{1},P_{1}}(x).
    \end{split}
\end{equation*}
Then we have the following Lemma.
\begin{lemma}
[Lemma 2.12. in \cite{Kim-Pis}]For any $x\in\Omega$, we have
$$
P U_{{d_1}, {P_1}}(x)=\sum_{i=1}^{k} U_{i}(x)-\mu^{\frac{N p}{q+1}}\left(\frac{b_{N, p}}{\gamma_{N}}\right)^{p} \widetilde{H}_{{d_1}, {P_1}}(x)+o\left(\mu^{\frac{N p}{q+1}}\right).
$$

\end{lemma}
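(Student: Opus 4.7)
The plan is to mimic the Green's-function argument used in the proof of Lemma~\ref{Prele1} and push it to the higher-order scale $\mu^{Np/(q+1)}$. The guiding intuition is that, by Lemma~\ref{Prele1} combined with the far-field asymptotic $V_{1,0}(z)\sim b_{N,p}|z|^{-(N-2)}$, the projected bubble $PV_1$ behaves away from $P_1$ like a constant multiple of $G(\cdot,P_1)$; taking the $p$-th power and convolving with $G(x,\cdot)$ then produces the function $\tilde G_{d_1,P_1}$ up to the prescribed constants, while the singular piece $\tilde\gamma_{N,p}|x-P_1|^{-((N-2)p-2)}$ built into $\tilde H_{d_1,P_1}$ absorbs the algebraic singularity inherited from the bubble $U_1$.

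First I would write
$$
PU_{d_1,P_1}(x)=\int_\Omega G(x,y)\,PV_1(y)^p\,dy,\qquad U_1(x)=\int_{\mathbb R^N} S(x,y)\,V_1(y)^p\,dy,
$$
the second identity being valid because $(N-2)p>2$ ensures integrability of $S(x,\cdot)V_1^p$ at infinity. Subtracting and using $G=S-H$ gives
$$
PU_{d_1,P_1}(x)-U_1(x)=\int_\Omega G(x,y)\bigl[PV_1^p-V_1^p\bigr]dy-\int_\Omega H(x,y)V_1^p\,dy-\int_{\Omega^c}S(x,y)V_1^p\,dy.
$$
The tail integral over $\Omega^c$ is $O(\mu_1^{Np/(q+1)+1})$ by the decay of $V_1$, exactly as for $\mathcal{A}_1$ in the proof of Lemma~\ref{Prele1}.

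Second, I would split the two remaining bulk integrals over $\Omega$ into an inner region $\{|y-P_1|<\mu_1^\theta\}$ for some fixed small $\theta\in(0,1)$ and its complement. In the outer region, the asymptotic $V_{1,0}(z)\sim b_{N,p}|z|^{-(N-2)}$ yields the uniform expansion $V_1(y)=(b_{N,p}/\gamma_N)\mu_1^{N/(q+1)}S(y,P_1)+o(\mu_1^{N/(q+1)})$, and combined with Lemma~\ref{Prele1} one obtains $PV_1(y)=(b_{N,p}/\gamma_N)\mu_1^{N/(q+1)}G(y,P_1)+o(\mu_1^{N/(q+1)})$; raising to the $p$-th power and substituting into the two integrals produces the leading contribution
$$
-\Bigl(\frac{b_{N,p}}{\gamma_N}\Bigr)^p\mu_1^{Np/(q+1)}\int_\Omega G(x,y)\bigl[S^p-G^p\bigr](y,P_1)\,dy+o(\mu^{Np/(q+1)}).
$$
In the inner region, the change of variables $z=\mu_1^{-1}(y-P_1)$ together with smoothness of $H$ and the bound $|PV_1-V_1|\lesssim\mu_1^{N/(q+1)}$ from Lemma~\ref{Prele1} reduces both integrals to weighted integrals of $V_{1,0}^p$ or $V_{1,0}^{p-1}$ over $\{|z|<\mu_1^{\theta-1}\}$; although these rescaled integrals diverge at infinity because $(N-2)p<N$, the diverging parts at $|z|\sim\mu_1^{\theta-1}$ match exactly against the corresponding divergences coming from the outer asymptotic at $|y-P_1|\sim\mu_1^\theta$, so that the net inner contribution is $o(\mu_1^{Np/(q+1)})$.

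Finally, I would identify $\int_\Omega G(x,y)[S^p-G^p](y,P_1)\,dy$ with $\tilde H_{d_1,P_1}(x)$ up to the appropriate power of $d_1$ by applying $-\Delta_x$ to both sides and comparing with the defining PDEs of $\tilde G_{d_1,P_1}$ and $\tilde H_{d_1,P_1}$; this yields the stated expansion. The main obstacle is the non-integrability of $V_1^p$ over $\mathbb R^N$ (one of the three new difficulties highlighted in the introduction): it forbids the Dirac-concentration strategy that worked for $U_1^q$ in Lemma~\ref{Prele1}, and forces one to track the full singular far-field of $V_1$ and to carry out an inner--outer matching at $|y-P_1|\sim \mu_1^\theta$, recognizing that the combination $S^p-G^p$, together with the singular correction $\tilde\gamma_{N,p}|x-P_1|^{-((N-2)p-2)}$ built into $\tilde H_{d_1,P_1}$, is precisely what captures the leading-order correction at scale $\mu^{Np/(q+1)}$.
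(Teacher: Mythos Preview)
The paper does not supply its own proof of this statement; it is simply quoted as Lemma~2.12 of \cite{Kim-Pis} with no argument given, so there is nothing in the paper to compare your proposal against directly.

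That said, your Green-function strategy is in the spirit of the alternative proof the paper gives for Lemma~\ref{Prele1}, but it contains a real gap. Your claim that the tail $\int_{\Omega^c}S(x,y)V_1^p\,dy$ is $O(\mu_1^{Np/(q+1)+1})$ ``exactly as for $\mathcal A_1$'' is false. On $\Omega^c$ one has $V_1(y)\sim b_{N,p}\,\mu_1^{N/(q+1)}|y-P_1|^{-(N-2)}$, hence $V_1^p(y)\sim b_{N,p}^p\,\mu_1^{Np/(q+1)}|y-P_1|^{-(N-2)p}$, and the resulting integral over $\Omega^c$ is a positive constant times $\mu_1^{Np/(q+1)}$, i.e.\ exactly the leading order. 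The analogy with $\mathcal A_1$ fails precisely because $U_{1,0}^q$ is integrable on $\mathbb R^N$ while $V_{1,0}^p$ is not --- the very obstacle you flag at the end. Consequently your displayed ``leading contribution'' $-\int_\Omega G(x,y)[S^p-G^p](y,P_1)\,dy$ is incomplete: it is missing both the tail piece $-\int_{\Omega^c}S(x,y)S^p(y,P_1)\,dy$ and the harmonic piece $-\int_\Omega H(x,y)S^p(y,P_1)\,dy$. Only when all three outer contributions are kept and combined does one obtain, after the identity $G=S-H$, the quantity
\[
-\Bigl(\tfrac{b_{N,p}}{\gamma_N}\Bigr)^p\mu_1^{Np/(q+1)}\Bigl[\int_{\mathbb R^N}S(x,y)S^p(y,P_1)\,dy-\int_\Omega G(x,y)G^p(y,P_1)\,dy\Bigr],
\]
which is what matches $-\mu^{Np/(q+1)}(b_{N,p}/\gamma_N)^p\widetilde H_{d_1,P_1}(x)$. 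So the scheme is salvageable, but the bookkeeping of which terms are negligible must be redone with the tail retained.
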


\section{Some important estimation}
Recall that $S(x,y) = \frac{\gamma_{N}}{|x-y|^{N-2}}$. And let $\epsilon_{n} \to 0$, as $ n\to +\infty$.
\begin{lemma}\label{ble1}
    $\frac{\partial PU_{d_{1,k_{n}},P_{1,k_{n}}  }}{\partial x_i}(P_{1,k_{n}}) = o( \mu_{1,k_{n}}^{\frac{Np}{q+1}} )$.
\end{lemma}
\begin{proof}
    We have
\begin{equation*}
\begin{split}
\frac{\partial PU_{d_{1,k_{n}},P_{1,k_{n}}  }}{\partial x_i}(x) =\int_{\Omega} \frac{\partial G  }{\partial x_i}(x,y)PV_{1,k_{n}}^{p}(y)dy,
\end{split}
\end{equation*}
and
\begin{equation*}
\begin{split}
\frac{\partial U_{1,k_{n}  }}{\partial x_i}(x) =\int_{\mathbb{R}^{N}} \frac{\partial S  }{\partial x_i}(x,y)V_{1,k_{n}}^{p}(y)dy.
\end{split}
\end{equation*}
So
\begin{equation*}
\begin{split}
\frac{\partial ( PU_{d_{1,k_{n}},P_{1,k_{n}} } -U_{1,k_{n}  } ) }  {\partial x_i}
=& -\int_{\Omega^{c}} \frac{\partial S  }{\partial x_i}(x,y)V_{1,k_{n}}^{p}(y)dy + \int_{\Omega} \frac{\partial S  }{\partial x_i}(x,y) (pV_{1,k_{n}}^{p} - V_{1,k_{n}}^{p})(y)dy
\\&-\int_{\Omega}\frac{\partial H  }{\partial x_i}(x,y)PV_{1,k_{n}}^{p}(y):=\mathcal{I}_1+\mathcal{I}_2+\mathcal{I}_3.
\end{split}
\end{equation*}
It is easy to check that
\begin{align*}
    \mathcal{I}_1 = -\mu_{k_{n}}^{\frac{Np}{q+1}}\int_{\Omega^{c}} \frac{\partial S  }{\partial x_i}(x,y)\frac{b_{N,p}^{p}}{\gamma^{p}_{N}}S^{y}(y,P_{0}) + o( \mu_{k_{n}}^{\frac{Np}{q+1}}),
    \\\mathcal{I}_2= \mu_{k_{n}}^{\frac{Np}{q+1}}\int_{\Omega} \frac{\partial S  }{\partial x_i}(x,y)\frac{b_{N,p}^{p}}{\gamma_{N}^{p}}(G(y,P_{0})^{p} - S^{p}(y,P_{1})  )+o(\mu_{k_{n}}^{\frac{Np}{q+1}}),
    \\\mathcal{I}_3= -\mu_{k_{n}}^{\frac{Np}{q+1}}\int_{\Omega}\frac{\partial H  }{\partial x_i}(x,y)G^{p}(y,P_{0})dy+o(\mu_{k_{n}}^{\frac{Np}{q+1}}).
\end{align*}
Thus
\begin{align*}
  \frac{\partial PU_{d_{1,k_{n}},P_{1,k_{n}}  }}{\partial x_i}(x) =  \mu_{k_{n}}^{\frac{Np}{q+1}}\frac{\partial\widetilde{H}}{\partial x_{i}}(x,P_{0})+o(\mu_{k_{n}}^{\frac{Np}{q+1}} ) .
\end{align*}
From \cite{Kim-Pis}, we get that $\frac{\partial\widetilde{H}}{\partial x_{i}}(P_{0},P_{0}) =0 $.
So $\frac{\partial PU_{d_{1,k_{n}},P_{1,k_{n}}  }}{\partial x_i}(P_{1,k_{n}}) = o( \mu_{k_{n}}^{\frac{Np}{q+1}} ) $.
\end{proof}

\begin{lemma}
For $x\in B_{\mu_{1,k_{n}}^{\alpha}}(P_{1,k_{n}})$, we have $\frac{\partial (PU_{d_{1,k_{n}},P_{1,k_{n}}}  - U_{1,k_{n}  })}{\partial x_i}(x) = O(\mu_{1,k_{n}}^{\frac{Np}{q+1}})$, as $ n\to +\infty$, where $\alpha$ is a small fixed positive constant.
\end{lemma}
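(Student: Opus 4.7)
Set $W := PU_{d_{1,n},P_{1,n}} - U_{1,n}$, which satisfies
\[ -\Delta W = PV_{1,n}^{p}-V_{1,n}^{p}\quad\text{in }\Omega,\qquad W=-U_{1,n}\quad\text{on }\partial\Omega. \]
I split $W = W_1 + W_2$, where $W_1$ solves the inhomogeneous equation with zero Dirichlet data and $W_2$ is the harmonic extension of $-U_{1,n}|_{\partial\Omega}$. Since $\mathrm{dist}(P_{1,n},\partial\Omega)\geq\delta_2>0$ and $U_{1,0}(z)\sim a_{N,p}|z|^{-((N-2)p-2)}$ at infinity, one has $\|U_{1,n}\|_{L^\infty(\partial\Omega)}\leq C\mu_{1,n}^{Np/(q+1)}$, so by the maximum principle and interior gradient estimates for harmonic functions, $|\nabla W_2(x)|\leq C\mu_{1,n}^{Np/(q+1)}$ uniformly on $\overline{B_{\mu_{1,n}^{\alpha}}(P_{1,n})}$ for $\mu_{1,n}^{\alpha}<\delta_2/2$.

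\textbf{The $W_1$ term.} I would use the Green representation
\[ W_1(x)=\int_{\Omega}G(x,y)\bigl(PV_{1,n}^{p}-V_{1,n}^{p}\bigr)(y)\,dy, \]
differentiate under the integral sign, and decompose $\partial_{x_i}G=\partial_{x_i}S-\partial_{x_i}H$. For $x$ in a compact subset of $\Omega$, the symmetry $H(x,y)=H(y,x)$ combined with the maximum principle bound $|H(y,x)|\leq\gamma_{N}/\mathrm{dist}(x,\partial\Omega)^{N-2}$ shows that $\partial_{x_i}H(x,\cdot)$ is uniformly bounded on $\Omega$, so the $\partial H$ contribution is controlled by $\|PV_{1,n}^{p}-V_{1,n}^{p}\|_{L^{1}(\Omega)}$. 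Combining Lemma 2.9 (which yields $|PV_{1,n}-V_{1,n}|\leq C\mu_{1,n}^{N/(q+1)}$) with the standard bound $V_{1,n}^{p-1}(y)\leq C\mu_{1,n}^{N(p-1)/(q+1)}(\mu_{1,n}+|y-P_{1,n}|)^{-(N-2)(p-1)}$ via the mean value inequality produces
\[ \bigl|PV_{1,n}^{p}-V_{1,n}^{p}\bigr|(y)\leq C\mu_{1,n}^{Np/(q+1)}\left[\frac{1}{(\mu_{1,n}+|y-P_{1,n}|)^{(N-2)(p-1)}}+1\right], \]
whose $L^{1}$ norm over $\Omega$ is $O(\mu_{1,n}^{Np/(q+1)})$ since $(N-2)(p-1)<N$. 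The harder singular piece reduces to proving that
\[ \mathcal{I}(x):=\int_{\Omega}\frac{dy}{|x-y|^{N-1}(\mu_{1,n}+|y-P_{1,n}|)^{(N-2)(p-1)}} \]
is uniformly bounded in $x\in B_{\mu_{1,n}^{\alpha}}(P_{1,n})$.

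\textbf{Main obstacle.} Since $p<(N-1)/(N-2)$ one has $0<(N-2)(p-1)<1$, so the analogous integral on $\mathbb{R}^{N}$ diverges at infinity, and the boundedness of $\Omega$ must be genuinely exploited. The plan is to rescale by $y=\mu_{1,n}w+P_{1,n}$, yielding
\[ \mathcal{I}(x)=\mu_{1,n}^{1-(N-2)(p-1)}\int_{\widetilde{\Omega}}\frac{dw}{|w-w_{x}|^{N-1}(1+|w|)^{(N-2)(p-1)}}, \]
with $w_{x}=\mu_{1,n}^{-1}(x-P_{1,n})$ satisfying $|w_{x}|\leq \mu_{1,n}^{\alpha-1}$ and $\widetilde{\Omega}$ of diameter $O(\mu_{1,n}^{-1})$. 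Splitting the $w$-integration into the three zones $\{|w-w_{x}|<|w_{x}|/2\}$, $\{|w-w_{x}|\geq|w_{x}|/2,\,|w|\leq 2|w_{x}|\}$, and $\{|w|>2|w_{x}|\}\cap\widetilde{\Omega}$, elementary polar-coordinate estimates balance the two singular factors: the outer zone contributes precisely $\mu_{1,n}^{(N-2)(p-1)-1}$ (using the cutoff $|w|\sim\mu_{1,n}^{-1}$ together with $(N-2)(p-1)<1$), which exactly cancels the prefactor to produce an $O(1)$ contribution, while the two inner zones yield $O(\mu_{1,n}^{\alpha(1-(N-2)(p-1))})$ after multiplication by the prefactor. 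Combining the resulting bound $|\partial_{x_{i}}W_{1}|=O(\mu_{1,n}^{Np/(q+1)})$ with the $W_{2}$ estimate completes the proof.
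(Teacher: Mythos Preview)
Your argument is correct and follows essentially the same Green-representation strategy as the paper, with a minor but clean organizational difference. The paper writes $\partial_i(U_{1,n}-PU_{d_{1,n},P_{1,n}})$ directly as three integrals
\[
\mathcal I_1=\int_{\Omega^c}\partial_{x_i}S\cdot V_{1,n}^p,\qquad
\mathcal I_2=\int_{\Omega}\partial_{x_i}S\cdot(V_{1,n}^p-PV_{1,n}^p),\qquad
\mathcal I_3=\int_{\Omega}\partial_{x_i}H\cdot PV_{1,n}^p,
\]
and bounds each by $C\mu_{1,n}^{Np/(q+1)}$ using $|\partial_{x_i}H(x,\cdot)|\le C$ (exactly your observation) and a one-line assertion for $\mathcal I_2$. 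Your split $W=W_1+W_2$ packages the tail $\mathcal I_1$ together with the $\int_\Omega \partial_iH\cdot V_{1,n}^p$ portion of $\mathcal I_3$ into the harmonic piece $W_2$, which you then control by the maximum principle and interior gradient estimates for harmonic functions rather than by direct integral bounds; this is a tidy alternative. The heart of both proofs is the same singular integral---your $\mathcal I(x)$ is precisely what drives the paper's $\mathcal I_2$---and the paper simply asserts the bound $\le C\mu_{1,n}^{Np/(q+1)}$ without justification, so your three-zone analysis (correctly using $(N-2)(p-1)<1$ together with the cutoff $|w|\lesssim\mu_{1,n}^{-1}$ so that the outer zone yields exactly $\mu_{1,n}^{(N-2)(p-1)-1}$, cancelling the prefactor) actually fills in detail the paper omits.
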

\begin{proof}
We have
\begin{equation*}
\begin{split}
\frac{\partial PU_{d_{1,k_{n}},P_{1,k_{n}}  }}{\partial x_i}(x) =\int_{\Omega} \frac{\partial G  }{\partial x_i}(x,y)PV_{1,k_{n}}^{p}(y)dy,
\end{split}
\end{equation*}
and
\begin{equation*}
\begin{split}
\frac{\partial U_{1,k_{n}  }}{\partial x_i}(x) =\int_{\mathbb{R}^{N}} \frac{\partial S  }{\partial x_i}(x,y)V_{1,k_{n}}^{p}(y)dy.
\end{split}
\end{equation*}
So
\begin{equation*}
\begin{split}
\frac{\partial ( U_{1,k_{n}  }  -PU_{d_{1,k_{n}},P_{1,k_{n}}) }}  {\partial x_i}
=& \int_{\Omega^{c}} \frac{\partial S  }{\partial x_i}(x,y)V_{1,k_{n}}^{p}(y)dy + \int_{\Omega} \frac{\partial S  }{\partial x_i}(x,y) (V_{1,k_{n}}^{p} - PV_{1,k_{n}}^{p})(y)dy
\\&+\int_{\Omega}\frac{\partial H  }{\partial x_i}(x,y)PV_{1,k_{n}}^{p}(y):=\mathcal{I}_1+\mathcal{I}_2+\mathcal{I}_3.
\end{split}
\end{equation*}
Direct computations shows that
\begin{equation*}
\begin{split}
|\mathcal{I}_1|
\leq &C\mu_{1,k_{n}}^{\frac{Np}{q+1}}\int_{\Omega^{c}}\frac{1}{|x-y|^{N-1}}\frac{1}{  (1+ |y-P_{1,k_{n}}| )^{(N-2)p} }
\leq C\mu_{1,k_{n}}^{\frac{Np}{q+1}},
\end{split}
\end{equation*}
and
\begin{equation*}
\begin{split}
| \mathcal{I}_2 |\leq &C\int_{\Omega}\frac{1}{|x-y|^{N-1}}\bigg( \frac{\mu_{1,k_{n}}^{-\frac{N(p-1)}{p+1}}}{(  1+\mu_{1,k_{n}}^{-1}|y-P_{1,k_{n}}|)^{  (N-2)(p-1) }}\mu_{1,k_{n}}^{\frac{N}{q+1}}+ O(\mu_{1,k_{n}}^{\frac{Np}{q+1}}  )\bigg)dy
\leq C\mu_{1,k_{n}}^{\frac{Np}{q+1}}.
\end{split}
\end{equation*}
Since $x\in B_{\mu_{1,k_{n}}^{\alpha}}(P_{1,k_{n}})$ and $dist(P_{1,k_{n}},\Omega   )>\delta_2$, we have $ |\frac{\partial H  }{\partial x_i}(x,y)|\leq C  $. Thus
\begin{equation*}
    \begin{split}
        |\mathcal{I}_3|\leq& C\int_{\Omega}\frac{\mu_{1,k_{n}}^{-\frac{Np}{p+1}}}{(  1  + \mu_{1,k_{n}}^{-1}|  y-P_{1,k_{n}}|)^{(N-2)p}}dy
        \leq C\mu_{1,k_{n}}^{\frac{Np}{q+1}}.
    \end{split}
\end{equation*}
\end{proof}

\begin{lemma}\label{Ble8}
For $x\in B_{\mu_{1,k_{n}}^{\alpha}}(P_{1,k_{n}})$, we have
$$\frac{\partial (P\Phi_{1,k_{n},0}  - \Phi_{1,k_{n},0})}{\partial x_i}(x) = O(\mu_{1,k_{n}}^{\frac{N}{q+1}-1}),\quad
 \frac{\partial (P\Phi_{1,k_{n},j}  - \Phi_{1,k_{n},j})}{\partial x_i}(x) = O(\mu_{1,k_{n}}^{\frac{N}{q+1}}),
 $$
 for $j=1,\cdots,N$, as $ n\to +\infty$, where $\alpha$ is a small fixed positive constant.
\end{lemma}
\begin{proof}
We have
\begin{equation*}
\begin{split}
&\frac{\partial (P\Phi_{1,k_{n},0}  - \Phi_{1,k_{n},0})}{\partial x_i}(x)\\
=& \int_{\Omega^{c}} \frac{\partial S  }{\partial x_i}(x,y)qU_{1,k_{n}  }^{q-1}\Psi_{1,n,0}(y)dy
+\int_{\Omega}\frac{\partial H  }{\partial x_i}(x,y)qU_{1,k_{n}  }^{q-1}\Psi_{1,n,0}(y)
\\\leq &C\mu_{1,k_{n}}^{\frac{Npq}{q+1}-1}\int_{\Omega^{c}}\frac{1}{|x-y|^{N-1}}\frac{1}{  (1+ |y-P_{1,k_{n}}| )^{\frac{N(p+1)q }{q+1}} }+C\int_{\Omega}\frac{\mu_{1,k_{n}}^{-\frac{Np}{p+1}-1}}{(  1  + \mu_{1,k_{n}}^{-1}|  y-P_{1,k_{n}}|)^{\frac{N(p+1)q }{q+1}}}dy\\
\leq &C\mu_{1,k_{n}}^{\frac{Npq}{q+1}-1}+C\mu_{1,k_{n}}^{\frac{N}{q+1}-1}.\\
\end{split}
\end{equation*}

For $j=1,\cdots,N$, we have
\begin{equation*}
\begin{split}
&\frac{\partial (P\Phi_{1,k_{n},j}  - \Phi_{1,k_{n},j})}{\partial x_i}(x)\\
=& \int_{\Omega^{c}} \frac{\partial S  }{\partial x_i}(x,y)pU_{1,k_{n}  }^{q-1}\Psi_{1,n,j}(y)dy
+\int_{\Omega}\frac{\partial H  }{\partial x_i}(x,y)pU_{1,k_{n}  }^{q-1}\Psi_{1,n,j}(y)\\
\leq &C\mu_{1,k_{n}}^{\frac{Npq}{q+1}}\int_{\Omega^{c}}\frac{1}{|x-y|^{N-1}}\frac{1}{  (1+ |y-P_{1,k_{n}}| )^{\frac{N(p+1)q }{q+1}+1} }\\
+&\int_{B_{\delta}(P_{1,k_{n}})}\frac{\partial H  }{\partial x_i}(x,y)pU_{1,k_{n}  }^{q-1}\Psi_{1,n,j}(y)+\int_{\Omega-B_{\delta}(P_{1,k_{n}})}\frac{\partial H  }{\partial x_i}(x,y)pU_{1,k_{n}  }^{q-1}\Psi_{1,n,j}(y)\\
\leq &C\mu_{1,k_{n}}^{\frac{Npq}{q+1}}+O(\mu_{1,k_{n}}^{\frac{Npq}{q+1}}  )+O( \mu_{1,k_{n}}^{\frac{N}{q+1}} ),
\end{split}
\end{equation*}
where $\delta$ is a fix small constant.
\end{proof}

\begin{lemma}
    We have
    \begin{align*}
         \mu_{1,k_{n}} \widetilde{P\Psi}_{1,k_{n},0}(x) -  \mu_{1,k_{n}} \Psi_{1,k_{n},0}(x) = O(\mu_{1,k_{n}}^{\frac{Np}{q+1}})
    \end{align*}
    and
    \begin{align*}
        \mu_{1,k_{n}} \widetilde{P\Psi}_{1,k_{n},j}(x) -  \mu_{1,k_{n}} \Psi_{1,k_{n},j}(x) = O(\mu_{1,k_{n}}^{\frac{Np}{q+1} +1})
    \end{align*}
    for $ j\neq 0$.
\end{lemma}
\begin{proof}
   for  We have
    \begin{align*}
    &\mu_{1,k_{n}} \widetilde{P\Psi}_{1,k_{n},0}(x) -  \mu_{1,k_{n}} \Psi_{1,k_{n},0}(x)
    \\=&\int_{\Omega}G(x,y)p( \mu(PV_{1,k_{n}})^{p-1}P\Phi_{1,k_{n},0}    ) - \int_{\mathbb{R}^{N}}s(x,y)p  \mu(V_{1,k_{n}})^{p-1}\Phi_{1,k_{n},0}
    \\=&-\int_{\Omega^{c}}S(x,y)p (V_{1,k_{n}})^{p-1} \mu\Phi_{1,k_{n},0}-\int_{\Omega}H(x,y)p( (PV_{1,k_{n}})^{p-1} \mu_{1,k_{n}} P\Phi_{1,k_{n},0}    )
    \\&+ \int_{\Omega}S(x,y)p( (PV_{1,k_{n}})^{p-1}P\Phi_{1,k_{n},0} -  \mu(V_{1,k_{n}})^{p-1}\Phi_{1,k_{n},0}    )
    \\=&I_1+I_2+I_3.
\end{align*}
It is easy to check that $|I_{1}|+|I_{2}|+|I_{3}| = O(\mu_{1,k_{n}}^{\frac{Np}{q+1}}) $. Simlarly, we can prove that $\mu_{1,k_{n}} \widetilde{P\Psi}_{1,k_{n},j}(x) -  \mu_{1,k_{n}} \Psi_{1,k_{n},j}(x) = O(\mu_{1,k_{n}}^{\frac{Np}{q+1} +1})$.
\end{proof}

\begin{lemma}\label{ble5}
    We have
    \begin{align*}
        &\int_{B_{\mu_{1,k_{n}}^{\alpha}}(P_{0})}U^{q}_{1}\frac{\partial \sum_{j=1}^{N} b_{j,n}\mu_{1,k_{n}} (\widetilde{P\Psi}_{1,k_{n},j}-\Psi_{1,k_{n},j})  }{\partial x_{i}}(x)
        \\&+
        \int_{B_{\mu_{1,k_{n}}^{\alpha}}(P_{0})}\sum_{j=1}^{N}qU_{1,k_{n}}^{q-1}\Psi_{1,k_{n},j}\frac{\partial (PU_{d_{1,k_{n}},P_{1,k_{n}}} - U_{1,k_{n}})  }{  \partial x_{i}}
        \\=&-\frac{\mu_{1,k_{n}}^{\frac{N(p+1)}{q+1}+1}}{p+1}(\frac{b_{N,p}}{\gamma_{N}})^{p}\int_{\mathbb{R}^{N}}U_{0,1}^{q}\sum_{j=1}^{N}b_{j,n}\frac{\partial^{2}\tau }{\partial x_{i}\partial x_{j}}(P_{0})+o(\mu_{1,k_{n}}^{\frac{N(p+1)}{q+1}+1}).
    \end{align*}
\end{lemma}
\begin{proof}

 Firstly, we estimate $ \frac{\partial \mu_{1,k_{n}} \widetilde{P\Psi}_{1,k_{n},j}}{\partial x_{i}}(x) - \frac{\partial \mu_{1,k_{n}} \Psi_{1,k_{n},j}}{\partial x_{i}}(x)$.

\begin{align*}
    &\frac{\partial \mu_{1,k_{n}} \widetilde{P\Psi}_{1,k_{n},j}}{\partial x_{i}}(x) - \frac{\partial \mu_{1,k_{n}} \Psi_{1,k_{n},j}}{\partial x_{i}}(x)
    \\=&\int_{\Omega}\frac{\partial G}{\partial x_{i}}(x,y)p( \mu_{1,k_{n}}(PV_{1,k_{n}})^{p-1}P\Phi_{1,k_{n},j}    ) - \int_{\mathbb{R}^{N}}\frac{\partial S}{\partial x_{i}}(x,y)p  \mu_{1,k_{n}}(V_{1,k_{n}})^{p-1}\Phi_{1,k_{n},j}
    \\=&-\int_{\Omega^{c}}\frac{\partial S}{\partial x_{i}}(x,y)p (V_{1,k_{n}})^{p-1} \mu_{1,k_{n}}\Phi_{1,k_{n},j}-\int_{\Omega}\frac{\partial H}{\partial x_{i}}(x,y)p( (PV_{1,k_{n}})^{p-1} \mu_{1,k_{n}} P\Phi_{1,k_{n},j}    )
    \\&+ \int_{\Omega}\frac{\partial S}{\partial x_{i}}(x,y)p( \mu_{1,k_{n}}(PV_{1,k_{n}})^{p-1}P\Phi_{1,k_{n},j} -  \mu_{1,k_{n}}(V_{1,k_{n}})^{p-1}\Phi_{1,k_{n},j}    )
    \\=&I_1+I_2+I_3.
\end{align*}
It is easy to check that
\begin{align*}
    I_1 =\mu_{1,k_{n}}^{\frac{Np}{q+1} +1 }(\frac{b_{N,p}}{\gamma_{N}})^{p}\int_{\Omega^{c}}\frac{\partial S}{\partial x_{i}}(x,y)pS^{p-1}(P_{0},y)\frac{\partial S}{\partial x_{j}}(P_{0},y) +o(\mu_{1,k_{n}}^{\frac{Np}{q+1} +1 }) ,
    \\ I_2= \mu_{1,k_{n}}^{\frac{Np}{q+1} +1 }(\frac{b_{N,p}}{\gamma_{N}})^{p}\int_{\Omega}\frac{\partial H}{\partial x_{i}}(x,y)p G(P_{0},y)^{p-1} \frac{\partial G}{\partial x_{j}}(P_{0},y)    )+o(\mu_{1,k_{n}}^{\frac{Np}{q+1} +1 }).
\end{align*}
Now we rewrite $I_3$
\begin{align*}
    I_{3}=&\int_{\Omega}\frac{\partial S}{\partial x_{i}}(x,y)\mu_{1,k_{n}} p( (PV_{1,k_{n}})^{p-1} -(V_{1,k_{n}})^{p-1}    )\Phi_{1,k_{n},j}
    \\&+\int_{\Omega}\frac{\partial S}{\partial x_{i}}(x,y)\mu_{1,k_{n}} p (PV_{1,k_{n}})^{p-1} (P\Phi_{1,k_{n},j} - \Phi_{1,k_{n},j})
    \\=&I_{31}+I_{32}.
\end{align*}
   It is easy to check that
   \begin{align*}
      I_{32}=\mu_{1,k_{n}}^{\frac{Np}{q+1} +1 }(\frac{b_{N,p}}{\gamma_{N}})^{p} \int_{\Omega}\frac{\partial S}{\partial x_{i}}(x,y)pG^{p-1}(P_{0},y)\frac{\partial H}{\partial x_{j}}(P_{0},y)  + o(\mu_{1,k_{n}}^{\frac{Np}{q+1} +1 } ).
   \end{align*}
 Now we rewrite $I_{31}$
 \begin{align*}
     I_{31}=&\int_{\Omega}\frac{\partial S}{\partial x_{i}}(x,y)\mu_{1,k_{n}} p( (PV_{1,k_{n}})^{p-1} -(V_{1,k_{n}})^{p-1} + (p-1)V_{1,k_{n}}^{p-2}\frac{b_{N,p}}{\gamma_{N}}\mu^{\frac{N}{q+1}}H(P_{0},y)   )\Phi_{1,k_{n},j}
     \\&- \int_{\Omega}\frac{\partial S}{\partial x_{i}}(x,y)\mu^{\frac{N}{q+1}+1}pV_{1,k_{n}}^{p-2}\frac{b_{N,p}}{\gamma_{N}}(p-1)V_{1,k_{n}}^{p-2}\frac{b_{N,p}}{\gamma_{N}}\Phi_{1,k_{n},j}
     \\=&I_{311}+I_{312}.
 \end{align*}
 It is easy to check that
 \begin{align*}
     I_{311} = &-\mu_{1,k_{n}}^{\frac{Np}{q+1}+1}\int_{\Omega}\frac{\partial S}{\partial x_{i}}(P_{0},y)p(\frac{b_{N,p}}{\gamma_{N}})^{p}(   G^{p-1}(P_{0},y)
     -S^{p-1}(P_{0},y)
     \\&+(p-1)S^{p-2}(P_{0},y)H(P_{0},y)  )\frac{\partial S}{\partial x_{j}}(P_{0},y) + o(\mu_{1,k_{n}}^{\frac{Np}{q+1}+1}  ).
 \end{align*}

Now we estimate $ \frac{\partial PU_{d_{1,k_{n}},P_{1,k_{n}}} }{\partial x_{i}} -\frac{\partial U_{1,k_{n}} }{\partial x_{i}} $,
\begin{align*}
    &\frac{\partial PU_{d_{1,k_{n}},P_{1,k_{n}}} }{\partial x_{i}} -\frac{\partial U_{1,k_{n}} }{\partial x_{i}}(x)
    \\=&\int_{\Omega}\frac{\partial G}{\partial x_{i}}(x,y)(PV_{1,k_{n}})^{p}-\int_{\mathbb{R}^{N}}\frac{\partial S}{\partial x_{i}}(x,y)V_{1,k_{n}}^{p}
    \\=&-\int_{\Omega^{c}}\frac{\partial S}{\partial x_{i}}(x,y)V_{1,k_{n}}^{p}-\int_{\Omega}\frac{\partial H}{\partial x_{i}}(x,y)(PV_{1,k_{n}})^{p}
    +\int_{\Omega}\frac{\partial S}{\partial x_{i}}(x,y)((PV_{1,k_{n}})^{p} - V_{1,k_{n}}^{p})
    \\=&I_{4}+I_{5}+I_{6}.
\end{align*}
It is easy to check that
\begin{align*}
    I_{4} = &-\int_{\Omega^{c}}\frac{\partial S}{\partial x_{i}}(P_{0},y)V_{1,k_{n}}^{p}-\mu_{1,k_{n}}^{\frac{Np}{q+1}}(\frac{b_{N,p}}{\gamma_{N}})^{p}\int_{\Omega^{c}}\sum_{z=1}^{N}\frac{\partial^2 S}{\partial x_{i}\partial x_{z}}(P_{0},y)(x-P_{0})_{z}S^{p}(P_{0},y)
    \\&+O(\mu_{1,k_{n}}^{\frac{Np}{q+1}}| x-P_{0}|^{2} ).
\end{align*}
and
\begin{align*}
    I_{5} = &-\int_{\Omega}\frac{\partial H}{\partial x_{i}}(P_{0},y)(PV_{1,k_{n}})^{p}- \mu_{1,k_{n}}^{\frac{Np}{q+1}}(\frac{b_{N,p}}{\gamma_{N}})^{p}\int_{\Omega}\sum_{z=1}^{N}\frac{\partial^2 H}{\partial x_{i}\partial x_{z}}(P_{0},y)(x-P_{0})_{z}G^{p}(P_{0},y)
    \\&+O(\mu_{1,k_{n}}^{\frac{Np}{q+1}}| x-P_{0}|^{2} ).
\end{align*}
Thus
\begin{align*}
   &\int_{B_{\mu_{1,k_{n}}^{\alpha}}(P_{1})}q\mu\sum_{j=1}^{N}U_{1,k_{n}}^{q-1}\Psi_{1,k_{n},j}I_{4}dx
   \\=&\mu^{\frac{N(p+1)}{q+1}+1}(\frac{b_{N,p}}{\gamma_{N}})^{p}\int_{\mathbb{R}^{N}}U_{0,1}^{q} \int_{\Omega^{c}}\frac{\partial^2 S}{\partial x_{j}\partial x_{z}}(P_{0},y)S^{p}(P_{0},y)+o(\mu_{1,k_{n}}^{\frac{N(p+1)}{q+1}+1} ),
\end{align*}
\begin{align*}
    &\int_{B_{\mu^{\alpha}}(P_{1})}q\mu\sum_{j=1}^{N}U_{1,k_{n}}^{q-1}\Psi_{1,k_{n},j}I_{5}dx
    \\=&\mu_{1,k_{n}}^{\frac{N(p+1)}{q+1}+1}(\frac{b_{N,p}}{\gamma_{N}})^{p} \int_{\mathbb{R}^{N}}U_{0,1}^{q}\int_{\Omega}\frac{\partial^2 H}{\partial x_{i}\partial x_{j}}(P_{0},y)G^{p}(P_{0},y)+o(\mu_{1,k_{n}}^{\frac{N(p+1)}{q+1}+1} ).
\end{align*}

Now we estimate $I_{6}$.
\begin{align*}
    I_{6} = &\int_{\Omega}\frac{\partial S}{\partial x_{i}}(x,y)((PV_{1,k_{n}})^{p} - V_{1,k_{n}}^{p})dy
    \\=&\int_{\Omega}-\frac{\partial S}{\partial y_{i}}(x,y)((PV_{1,k_{n}})^{p} - V_{1,k_{n}}^{p})dy
    \\=&-\int_{\partial\Omega}S(x,y)((PV_{1,k_{n}})^{p} - V_{1,k_{n}}^{p})\nu_{i}ds+\int_{\Omega}S(x,y)\frac{\partial ((PV_{1,k_{n}})^{p} - V_{1,k_{n}}^{p})}{\partial y_{i}}dy
    \\=&-\int_{\Omega}S(x,y)((PV_{1,k_{n}})^{p} - V_{1,k_{n}}^{p})\nu_{i}dy+\int_{\Omega}S(x,y)p( (PV_{1,k_{n}})^{p-1}\frac{\partial PV_{1,k_{n}}}{\partial y_{i}}-(V_{1,k_{n}})^{p-1}\frac{\partial V_{1,k_{n}}}{\partial y_{i}}  )dy
    \\=&I_{61}+I_{62}.
\end{align*}
Direct computation shows that
\begin{align*}
    &\int_{B_{\mu^{\alpha}}(P_{0})}q\mu_{1,k_{n}} U_{1,k_{n}}^{q-1}(x)\Psi_{1,k_{n},j}(x) I_{61}dx
    \\=&-\int_{\partial B_{\mu^{\alpha}}(P_{0})}\mu_{1,k_{n}} U_{1,k_{n}}^{q}\int_{\partial\Omega}S(x,y)((PV_{1,k_{n}})^{p} - V_{1,k_{n}}^{p})ds_{y}\nu_{j,x}ds_{x}
    \\&+\int_{B_{\mu^{\alpha}}(P_{0})}\mu_{1,k_{n}} U_{1,k_{n}}^{q}\int_{\partial\Omega}\frac{\partial S}{\partial x_{j}}(x,y)((PV_{1,k_{n}})^{p} - V_{1,k_{n}}^{p})\nu_{i}ds_{y}dx
    \\=&o(\mu_{1,k_{n}}^{\frac{N(p+1)}{q+1}+1})+\mu_{1,k_{n}}^{\frac{N(p+1)}{(q+1)}+1}(\frac{b_{N,p}}{\gamma_{N}})^{p}\int_{\mathbb{R}^{N}}U_{0,1}^{q}dx\int_{\partial\Omega}\frac{\partial S}{\partial x_{j}}(P_{0},y)(G^{p}(P_{0},y) - S^{p}(P_{0},y))\nu_{i}ds_{y}.
\end{align*}
We rewrite $I_{62}$
\begin{align*}
    I_{62}=&\int_{\Omega}S(x,y)p( (PV_{1,k_{n}})^{p-1}\frac{\partial PV_{1,k_{n}}}{\partial y_{i}}-(V_{1,k_{n}})^{p-1}\frac{\partial V_{1,k_{n}}}{\partial y_{i}}  )dy
    \\=&\int_{\Omega}S(x,y) p((PV_{1,k_{n}})^{p-1}  -V_{1,k_{n}}^{p-1} )\frac{\partial V_{1,k_{n}}}{\partial y_{i}}dy
    \\&+\int_{\Omega}S(x,y) p(PV_{1,k_{n}})^{p-1} (\frac{\partial PV_{1,k_{n}}}{\partial y_{i}} -\frac{\partial V_{1,k_{n}}}{\partial y_{i}}   )dy
    \\=&\int_{\Omega}S(x,y) p((PV_{1,k_{n}})^{p-1}  -V_{1,k_{n}}^{p-1} + (p-1)\frac{b_{N,p}}{\gamma_{N}}V_{1,k_{n}}^{p-2}H(P_{0},y)\mu^{\frac{N}{q+1}} )\frac{\partial V_{1,k_{n}}}{\partial y_{i}}dy
    \\&-\int_{\Omega}S(x,y) p(p-1)\frac{b_{N,p}}{\gamma_{N}}V_{1,k_{n}}^{p-2}H(P_{0},y)\mu^{\frac{N}{q+1}} \frac{\partial V_{1,k_{n}}}{\partial y_{i}}dy
    \\&+\int_{\Omega}S(x,y) p(PV_{1,k_{n}})^{p-1} (\frac{\partial PV_{1,k_{n}}}{\partial y_{i}} -\frac{\partial V_{1,k_{n}}}{\partial y_{i}}   )dy
    \\=&I_{621}+I_{622}+I_{623}.
\end{align*}
By directly computing, we get that
\begin{align*}
    &\int_{B_{\mu^{\alpha}}(P_{0})}q\mu_{1,k_{n}} U_{1,k_{n}}^{q-1}(x)\Psi_{1,k_{n},j}(x) I_{621}dx
    \\=&\int_{\partial B_{\mu^{\alpha}}(P_{0})}\mu_{1,k_{n}}  U_{1,k_{n}}^{q}(x)I_{621}v_{j}ds_{x}-\int_{B_{\mu^{\alpha}}(P_{0})}U_{1,k_{n}}^{q}\int_{\Omega}\mu_{1,k_{n}} \frac{\partial S}{\partial x_{j}}(x,y) p((PV_{1,k_{n}})^{p-1}  -V_{1,k_{n}}^{p-1}
    \\&+ (p-1)\mu^{\frac{N}{q+1}}\frac{b_{N,p}}{\gamma_{N}}V_{1,k_{n}}^{p-2}H(P_{0},y) )\frac{\partial V_{1,k_{n}}}{\partial y_{i}}dy
    \\=& -\mu_{1,k_{n}}^{\frac{N(p+1)}{q+1}+1}(\frac{b_{N,p}}{\gamma_{N}})^{p}\int_{\mathbb{R}^{N}}U_{0,1}^{q}dx\int_{\Omega}\frac{\partial S}{\partial x_{j}}(P_{0},y) p(G^{p-1}(P_{0},y)  -S^{p-1}(P_{0},y)
    \\&+ (p-1)\frac{b_{N,p}}{\gamma_{N}}S^{p-2}(P_{0},y)H(P_{0},y) )
    \frac{\partial S}{\partial y_{i}}(P_{0},y)dy+o(\mu_{1,k_{n}}^{\frac{N(p+1)}{q+1}+1})+O( \mu_{1,k_{n}}^{Np-\alpha}),
    \end{align*}
    \begin{align*}
        &\int_{B_{\mu_{1,k_{n}}^{\alpha}}(P_{0})}q\mu_{1,k_{n}} U_{1,k_{n}}^{q-1}(x)\Psi_{1,k_{n},j}(x) I_{623}dx
        \\=&-\int_{B_{\mu_{1,k_{n}}^{\alpha}}(P_{0})}\mu_{1,k_{n}} U_{1,k_{n}}^{q}\int_{\Omega}\frac{\partial S}{\partial x_{i}}(x,y) p(PV_{1,k_{n}})^{p-1} (\frac{\partial PV_{1,k_{n}}}{\partial y_{j}} -\frac{\partial V_{1,k_{n}}}{\partial y_{i}}   )dy+O( \mu^{Np-\alpha})
        \\=&\mu_{1,k_{n}}^{\frac{N(p+1)}{q+1}+1}(\frac{b_{N,p}}{\gamma_{N}})^{p}\int_{\mathbb{R}^{N}}U_{0,1}^{q}dx\int_{\Omega}\frac{\partial S}{\partial x_{j}}(P_{0},y)pG(P_{0},y)^{p-1}\frac{\partial H}{\partial y_{i}}(P_{0},y)+o(\mu_{1,k_{n}}^{\frac{N(p+1)}{q+1}+1})+O( \mu_{1,k_{n}}^{Np-\alpha}),
    \end{align*}
    and
    \begin{align*}
        &\int_{B_{\mu_{1,k_{n}}^{\alpha}}(P_{0})}q\mu_{1,k_{n}} U_{1,k_{n}}^{q-1}(x)\psi_{i}(x)I_{622}dx
        \\=&-\int_{\partial B_{\mu_{1,k_{n}}^{\alpha}}(P_{0})}\mu_{1,k_{n}}  U_{1,k_{n}}^{q}(x)I_{622}dx+\int_{B_{\mu_{1,k_{n}}^{\alpha}}(P_{0})}U_{1,k_{n}}^{q}(x)
        \\&\times\int_{\Omega}\frac{\partial S}{\partial x_{i}}(x,y) p(p-1)\frac{b_{N,p}}{\gamma_{N}}V_{1,k_{n}}^{p-2}H(P_{0},y)\mu_{1,k_{n}}^{\frac{N}{q+1}+1} \frac{\partial V_{1,k_{n}}}{\partial y_{j}}dy
        \\=&O( \mu_{1,k_{n}}^{Np-\alpha})+\int_{B_{\mu_{1,k_{n}}^{\alpha}}(P_{0})}U_{1,k_{n}}^{q}(x)\int_{\Omega}\frac{\partial S}{\partial x_{i}}(x,y) p(p-1)\frac{b_{N,p}}{\gamma_{N}}V_{1,k_{n}}^{p-2}H(P_{0},y)\mu_{1,k_{n}}^{\frac{N}{q+1}+1} \frac{\partial V_{1,k_{n}}}{\partial y_{j}}dy.
    \end{align*}
    So, we get
    \begin{align*}
        \int_{B_{\mu_{1,k_{n}}^{\alpha}}(P_{0})}q\mu_{1,k_{n}} U_{1,k_{n}}^{q-1}(x)\psi_{i}(x)I_{622}dx+\int_{B_{\mu^{\alpha}}(P_{0})}\mu_{1,k_{n}} U_{1,k_{n}}^{q}(x)I_{312}dx=O( \mu_{1,k_{n}}^{Np-\alpha}).
    \end{align*}
    Thus, we get
     \begin{align*}
        &\int_{B_{\mu_{1,k_{n}}^{\alpha}}(P_{0})}U^{q}_{1}\frac{\partial \sum_{j=1}^{N} b_{j,n}\mu_{1,k_{n}} (\widetilde{P\Psi}_{1,k_{n},j}-\Psi_{1,k_{n},j})  }{\partial x_{i}}(x)
        \\&+
        \int_{B_{\mu_{1,k_{n}}^{\alpha}}(P_{0})}\mu_{1,k_{n}}\sum_{j=1}^{N}qU_{1,k_{n}}^{q-1}\Psi_{1,k_{n},j}\frac{\partial (PU_{d_{1,k_{n}},P_{1,k_{n}}} - U_{1,k_{n}})  }{  \partial x_{i}}
        \\=&\mu_{1,k_{n}}^{\frac{N(p+1)}{q+1}+1}(\frac{b_{N,p}}{\gamma_{N}})^{p}\int_{\mathbb{R}^{N}}U_{0,1}^{q}\sum_{j=1}^{N}b_{j,n}(\int_{\Omega^{c}}\frac{\partial S}{\partial x_{i}}(P_{0},y)pS^{p-1}(P_{0},y)\frac{\partial S}{\partial x_{j}}(P_{0},y)
        \\&+\int_{\Omega}\frac{\partial H}{\partial x_{i}}(P_{0},y)p G(P_{0},y)^{p-1} \frac{\partial G}{\partial x_{j}}(P_{0},y) + \int_{\Omega}\frac{\partial S}{\partial x_{i}}(P_{0},y)pG^{p-1}(P_{0},y)\frac{\partial H}{\partial x_{j}}(P_{0},y)
        \\&-\int_{\Omega}\frac{\partial S}{\partial x_{i}}(P_{0},y)p(\frac{b_{N,p}}{\gamma_{N}})^{p}(   G^{p-1}(P_{0},y) -S^{p-1}(P_{0},y) +(p-1)S^{p-2}(P_{0},y)H(P_{0},y)  )\frac{\partial S}{\partial x_{i}}(P_{0},y)
        \\&+\int_{\Omega^{c}}\frac{\partial^2 S}{\partial x_{j}\partial x_{i}}(P_{0},y)S^{p}(P_{0},y)+\int_{\Omega}\frac{\partial^2 H}{\partial x_{i}\partial x_{j}}(P_{0},y)G^{p}(P_{0},y)
        \\&+\int_{\partial\Omega}\frac{\partial S}{\partial x_{j}}(P_{0},y)(G^{p}(P_{0},y) - S^{p}(P_{0},y))\nu_{i}ds -\int_{\Omega}\frac{\partial S}{\partial x_{j}}(P_{0},y) p(G^{p-1}(P_{0},y)  -S^{p-1}(P_{0},y)
    \\&+ (p-1)\frac{b_{N,p}}{\gamma_{N}}S^{p-2}(P_{0},y)H(P_{0},y) )
     \frac{\partial S}{\partial y_{i}}(P_{0},y)dy
        \\&+\int_{\Omega}\frac{\partial S}{\partial x_{j}}(P_{0},y)pG(P_{0},y)^{p-1}\frac{\partial H}{\partial y_{i}}(P_{0},y))+o(\mu_{1,k_{n}}^{\frac{N(p+1)}{q+1}+1})
        \\=&-\frac{\mu_{1,k_{n}}^{\frac{N(p+1)}{q+1}+1}}{p+1}(\frac{b_{N,p}}{\gamma_{N}})^{p}\int_{\mathbb{R}^{N}}U_{0,1}^{q}\sum_{j=1}^{N}b_{j,n}\frac{\partial^{2}\tau }{\partial x_{i}\partial x_{j}}(P_{0})+o(\mu_{1,k_{n}}^{\frac{N(p+1)}{q+1}+1}),
    \end{align*}
    the last equals sign follows from Lemma \ref{ble6}.
    \end{proof}
\begin{lemma}\label{ble6}
    We have that
    \begin{align*}
        &\frac{\partial^{2} \tau}{\partial x_{i}\partial x_{j}}(x)
        \\= &-(p+1)\bigg(\int_{\Omega}\frac{\partial^{2} H}{\partial x_{i}\partial x_{j}}(x,y) G^{p}(x,y)dy-\int_{\Omega}\frac{\partial H}{\partial x_{i}}(x,y) pG^{p-1}(x,y)\frac{\partial G}{\partial x_{j}}(x,y) dy
        \\&- \int_{\Omega}\frac{\partial H}{\partial x_{j}}(x,y) pG^{p-1}(x,y)\frac{\partial S}{\partial x_{j}}(x,y) - \int_{\Omega}\frac{\partial H}{\partial y_{i}}(x,y) pG^{p-1}(x,y)\frac{\partial S}{\partial x_{j}}(x,y)\bigg).
    \end{align*}
\end{lemma}
\begin{proof}
   Since
    \begin{align*}
       H (x,z) =& \int_{\Omega}G(x,y)G^{p}(z,y)dy -\int_{\mathbb{R}^{N}}S(x,y)S^{p}(z,y)dy
       \\=&-\int_{\Omega^{c}}S(x,y)S^{p}(z,y)dy - \int_{\Omega}H(x,y)G^{p}(z,y)dy+\int_{\Omega}S(x,y)(G^{p}(z,y) - S^{p}(z,y))dy,
    \end{align*}
    We have
    \begin{align*}
        \tau (x) = -\int_{\Omega^{c}}S^{p+1}(x,y)dy - \int_{\Omega}H(x,y)G^{p}(x,y)dy+\int_{\Omega}S(x,y)(G^{p}(x,y) - S^{p}(x,y))dy.
    \end{align*}
    So
    \begin{align*}
        \frac{\partial \tau}{\partial x_{i}} =&-\int_{\Omega^{c}}(p+1)S^{p}(x,y)\frac{\partial S}{\partial x_{i}}(x,y)dy - \int_{\Omega}\frac{\partial H}{\partial x_{i}}(x,y)G^{p}(x,y)dy
        \\&-\int_{\Omega}H(x,y)pG^{p-1}(x,y)\frac{\partial G}{\partial x_{i}}(x,y)dy+\int_{\Omega}\frac{\partial S}{\partial x_{i}}(G^{p}(x,y) - S^{p}(x,y) )dy
        \\&+\int_{\Omega}\Gamma(x,y)(pG^{p-1}(x,y)\frac{\partial G}{\partial x_{i}}(x,y) - p S^{p-1}(x,y)\frac{\partial S}{\partial x_{i}}(x,y) )dy.
    \end{align*}
    Since
    \begin{align*}
       &\int_{\Omega}\frac{\partial S}{\partial x_{i}}(G^{p}(x,y) - S^{p}(x,y) )dy
       \\=&\int_{\Omega}S^{p+1}(x,y)\nu_{i}ds + \int_{\Omega}\Gamma(x,y)(pG^{p-1}(x,y)\frac{\partial G}{\partial y_{i}}(x,y) - p S^{p-1}(x,y)\frac{\partial S}{\partial y_{i}}(x,y) )dy
    \end{align*}
    and
    \begin{align*}
        (p+1)\int_{\Omega^{c}}(p+1)S^{p}(x,y)\frac{\partial S}{\partial x_{i}}(x,y)dy = \int_{\Omega}S^{p+1}(x,y)\nu_{i}ds,
    \end{align*}
    we get that
    \begin{align*}
        \frac{\partial \tau}{\partial x_{i}} = &-\int_{\Omega}\frac{\partial H}{\partial x_{i}}(x,y)G^{p}(x,y)dy- \int_{\Omega}H(x,y)pG^{p-1}(x,y)\frac{\partial G}{\partial x_{i}}(x,y)dy
        \\&-\int_{\Omega}S(x,y)pG^{p-1}(x,y)(\frac{\partial H}{\partial x_{i}}H(x,y) + \frac{\partial H}{\partial y_{i}}(x,y)   )dy.
    \end{align*}
    Since
    \begin{align*}
        &\int_{\Omega}H(x,y)pG^{p-1}(x,y)\frac{\partial S}{\partial x_{i}}(x,y)dy
        \\=&-\int_{\Omega}H(x,y)pG^{p-1}(x,y)\frac{\partial G}{\partial y_{i}}(x,y)dy - \int_{\Omega}H(x,y)pG^{p-1}(x,y)\frac{\partial H}{\partial y_{i}}(x,y)dy
        \\=&\int_{\Omega}\frac{\partial H}{\partial y_{i}}(x,y)G^{p}(x,y) - - \int_{\Omega}H(x,y)pG^{p-1}(x,y)\frac{\partial H}{\partial y_{i}}(x,y)dy,
    \end{align*}
    we get that
    \begin{align*}
        \frac{\partial \tau}{\partial x_{i}} = -(p+1)(\int_{\Omega}\frac{\partial H}{\partial x_{i}}(x,y)G^{p}(x,y)dy+\int_{\Omega}\frac{\partial H}{\partial y_{i}}(x,y)G^{p}(x,y)dy  ).
    \end{align*}
    Thus
    \begin{align*}
        \frac{1}{p+1}\frac{\partial^2 \tau}{\partial x_{j}\partial x_{i}} = &-\int_{\Omega}\frac{\partial^2 H}{\partial x_{j}\partial x_{i}}(x,y)G^{p}(x,y)dy--\int_{\Omega}\frac{\partial H}{\partial x_{i}}(x,y)pG^{p-1}(x,y)\frac{\partial G}{\partial x_{j}}(x,y)dy
        \\&-\int_{\Omega}\frac{\partial^2 H}{\partial x_{j}\partial y_{i}}(x,y)G^{p}(x,y)dy-\int_{\Omega}\frac{\partial H}{\partial y_{i}}(x,y)pG^{p-1}(x,y)\frac{\partial G}{\partial x_{j}}(x,y)dy.
    \end{align*}
    Since
    \begin{align*}
        &-\int_{\Omega}\frac{\partial^2 H}{\partial x_{j}\partial y_{i}}(x,y)G^{p}(x,y)dy
        \\=&\int_{\Omega}\frac{\partial H}{\partial x_{j}}(x,y)pG^{p-1}(x,y)\frac{\partial G}{\partial y_{i}}(x,y)dy,
    \end{align*}
    we get that
    \begin{align*}
        &\frac{\partial^{2} \tau}{\partial x_{i}\partial x_{j}}(x)
        \\= &-(p+1)\bigg(\int_{\Omega}\frac{\partial^{2} H}{\partial x_{i}\partial x_{j}}(x,y) G^{p}(x,y)dy-\int_{\Omega}\frac{\partial H}{\partial x_{i}}(x,y) pG^{p-1}(x,y)\frac{\partial G}{\partial x_{j}}(x,y) dy
        \\&- \int_{\Omega}\frac{\partial H}{\partial x_{j}}(x,y) pG^{p-1}(x,y)\frac{\partial S}{\partial x_{j}}(x,y) - \int_{\Omega}\frac{\partial H}{\partial y_{i}}(x,y) pG^{p-1}(x,y)\frac{\partial S}{\partial x_{j}}(x,y)\bigg).
    \end{align*}
\end{proof}
\end{appendix}

\section*{Data Availability Statements}
All data generated or analysed during this study are included in this article.

\section*{Acknowledgments}
The authors are grateful to the anonymous referees for their careful reading and
valuable comments and suggestions that improved the presentation of the paper.

\end{document}